\newcommand{\R}{\mathbb{R}}
\newcommand{\X}{\mathbf{X}}
\renewcommand{\L}{\mathcal{L}}
\newcommand{\I}{\mathbf{I}}
\newcommand{\1}{\mathbf{1}}
\newcommand{\A}{\mathbf{A}}
\newcommand{\x}{\mathbf{x}}
\DeclareMathOperator{\mc}{MAXCUT}
\newcommand{\bz}{\mathbf{0}}
\newcommand{\B}{\mathbf{B}}
\newcommand{\bS}{\mathbf{S}}
\newcommand{\Y}{\mathbf{Y}}
\newcommand{\bu}{\mathbf{u}}
\newcommand{\bw}{\mathbf{w}}
\newcommand{\bZ}{\mathbf{Z}}
\newcommand{\bv}{\mathbf{v}}
\newcommand{\bm}{\mathbf{m}}
\newcommand{\ind}{\mathbbm{1}}
\newcommand*{\myQED}{\hfill \scalebox{0.59}{\fbox{\rule{0.08mm}{0pt}\rule{0pt}{1.3mm}}}}%
\newtheorem{example}{Example}
\newenvironment{proofsketch}{%
  \par\noindent\textit{Proof sketch.}\ }%
  {\hfill\par}
\newtheorem{theorem}{Theorem}
\newtheorem{lemma}[theorem]{Lemma}
\newtheorem{proposition}[theorem]{Proposition}
\newtheorem{corollary}[theorem]{Corollary}
\newtheorem{definition}[theorem]{Definition}
\DeclareMathOperator{\rank}{rank}
\DeclareMathOperator{\Diag}{Diag}
\DeclareMathOperator{\diag}{diag}
\DeclareMathOperator{\range}{range}
\DeclareMathOperator{\tr}{tr}
\tikzset{cedge/.style={draw=#1}
}
\newcolumntype{P}[1]{>{\centering\arraybackslash}p{#1}}
\newcommand{\ignore}[1]{}
\newdefinition{remark}{Remark}
\newcounter{propCounter}
\newlist{Properties}{enumerate}{2}
\setlist[Properties]{label=Property \arabic*.,itemindent=*}
\begin{document}
\shorttitle{On exactness of SDP relaxation for the Maximum Cut Problem}    

\shortauthors{A. Bhardwaj, H. Gogoi, V. Narayanan, and A. Pathapati} 

\title[mode = title]{On exactness of SDP relaxation for the Maximum Cut Problem}

\author[1]{Avinash Bhardwaj}[orcid=0000-0002-9690-1705]
\cormark[1]
\ead{abhardwaj@iitb.ac.in}
\author[1]{Hritiz Gogoi}
\ead{hgogoi@iitb.ac.in}
\author[1]{Vishnu Narayanan}
\ead{vishnu@iitb.ac.in}
\author[1]{Abhishek Pathapati}[orcid=0009-0009-2932-167X]
\ead{apathapati@iitb.ac.in}

\affiliation[1]{organization = {Department of Industrial Engineering and Operations Research},
addressline={IIT Bombay},
city={Mumbai, Maharashtra},
postcode={400076},
country={India}
}
\cortext[cor1]{Corresponding author}
\begin{keywords}
Maximum Cut Problem \sep SDP Relaxations \sep Strong Duality
\end{keywords}

\maketitle 

\begin{abstract}
Semidefinite programming (SDP) provides a powerful relaxation for the maximum cut problem. In this work, we characterize a few classes of graphs for which the SDP relaxation is exact. For each of these graph classes, we establish conditions for uniqueness of the SDP optimum. We complement these findings by identifying two graph operations that preserve the solution rank, and in turn exactness. These results reveal how the SDP relaxation for the maximum cut problem can remain exact in arbitrarily large graphs, owing to the presence of a small structural core that governs exactness. We further address two open problems posed by Mirka and Williamson (2024), by demonstrating that uniqueness of the maximum cut partition in exact relaxation does not imply uniqueness of the SDP optimum, and that exact relaxation with multiple optimal partitions may admit optimal SDP solutions lying outside the convex hull of rank-1 reference solutions.
\end{abstract}

\section{Introduction}\label{intro}
The Maximum-Cut (Max-Cut) problem asks the following: given a graph $G = (V, E)$ with non-negative weights assigned to its edges, find an optimal partition of the vertex set $V$ into two disjoint subsets such that the total weight of the edges crossing the partition (i.e., edges with endpoints in different subsets) is maximized. Over the years, this problem has generated immense interest. It is one of the earliest problems used to demonstrate the $NP$-hardness of computational problems \citep{karp2009reducibility}, and the first optimization problem for which a semidefinite programming relaxation was used to obtain a polynomial-time algorithm with a provable constant-factor approximation guarantee \citep{goemans1995improved}.
Apart from its theoretical significance, the Max-Cut problem finds applications across various disciplines and industries including statistical physics \citep{barahona1988application}, VLSI design \citep{barahona1988application, franuke2011via} and machine learning \citep{wang2013semi}. 

A semidefinite relaxation for the Max-Cut problem was introduced in~\citet{goemans1995improved, poljak1995nonpolyhedral}, leading to a polynomial-time approximation algorithm. Given an input graph, the relaxation produces a positive semidefinite matrix as the optimal solution. A feasible cut can then be obtained by applying a hyperplane rounding technique to this matrix, achieving an expected value of at least $\alpha_{\text{GW}} =\frac{2}{\pi} \displaystyle\min_{0 \leq \theta \leq \pi} \frac{\theta}{1 - \cos \theta} \approx  0.87856$ times the value of the maximum cut \citep{goemans1995improved}. This approximation ratio is optimal under the assumption of the Unique Games Conjecture and $P \neq NP$ \citep{khot2007optimal}. The Max-Cut SDP relaxation is exact if and only if a rank-1 optimal primal solution exists. In practice, however, exactness is difficult to certify: the set of optimal solutions may contain higher-rank (rank $>$ 1) solutions and solvers often return them. It is also possible that sub-optimal feasible solutions are recovered from hyperplane rounding even in cases where the relaxation is exact. Notably, Karloff \citep{karloff1996good} identified a class of graphs for which the semidefinite programming relaxation yields an exact solution; yet, the approximation ratio attained in these cases corresponds to the worst-case. From a theoretical standpoint, the study of conditions for exactness provides insight into the geometry of the feasible region (elliptope), revealing how combinatorial optima interact with convex relaxations. Understanding when higher-rank optima arise, and how uniqueness of solutions is preserved links Max-Cut to broader themes concerning the rank of optimal solutions in semidefinite relaxations. This motivates a systematic investigation of graph classes and structural principles that govern exactness and uniqueness in the Max-Cut SDP relaxation.

\subsection{Overview and our contributions}\label{overview}

Semidefinite programming has played a foundational role in combinatorial optimization since the seminal work of Lovász on the theta function, which provided an SDP-based characterization and bounds for the Shannon capacity of a graph and related problems \citep{lovasz1979shannon}. 
Soon after, similar semidefinite ideas were brought to bear on the Max-Cut problem, with the study of non-polyhedral relaxations for the Max-Cut problem initiated by Svatopluk Poljak and collaborators. Mohar and Poljak~\citep{moharpoljak1990eigenvalues} first proposed an eigenvalue-based upper bound, which was subsequently refined (referred to as the Delorme--Poljak bound) and evaluated on various graph families by Delorme and Poljak~\citep{delorme1993alaplacian,delorme1993combinatorial,delorme1993performance}. These evaluations included the identification of several classes of graphs for which the Delorme-Poljak bound is tight, which we summarize in \Cref{knowngraphs}. They further proved that the Exact Weighted Graph Problem— that is, deciding whether the Delorme-Poljak bound coincides with the actual value of Max-Cut given a graph with rational weights—is $NP$-Hard \citep{delorme1993combinatorial}.
Building on this line of work, Poljak and Rendl~\citep{poljak1995nonpolyhedral} introduced a semidefinite program to compute the Delorme--Poljak bound. Laurent and Poljak \citep{laurent1995positive} investigated the feasible region of the semidefinite relaxation of the Max-Cut problem, the elliptope, and provided a complete characterization of objective functions of the form \( C = bb^\top \), for some vector \( b \), for which the relaxation is exact. Around the same time, Goemans and Williamson~\citep{goemans1995improved} introduced an equivalent semidefinite relaxation (called the GW-relaxation) and established that its approximation guarantee is at least~$\alpha_{\text{GW}}$ using hyperplane rounding. Karloff \citep{karloff1996good} showed that the approximation ratio of GW-relaxation is exactly $\alpha_{GW}$. Tightness of the Goemans--Williamson approximation ratio and its dependence on spectral properties have been further investigated using eigenvalue-based techniques; see, for example, \citep{alon2000bipartite} \citep{alon2001constructing}. Sojoudi and Lavaei \citep{sojoudi2014exactness} investigated the exactness of semidefinite relaxation for general quadratically constrained quadratic programs (QCQPs), of which the Max-Cut problem is a special case. De Silva and Tuncel \citep{de2019strict} studied strict complementarity phenomenon over elliptopes, when SDP relaxation is exact. More recently, Hong et al. \citep{hong} characterized conditions under which GW-relaxation is exact on weighted cycle graphs. They also derived bounds on the rank of the optimal solution for the vertex-sum of two graphs. Mirka and Williamson \citep{mirka2024max} investigated the uniqueness of the optimal rank-1 solution for the GW-relaxation. We summarize the graph classes identified in the literature for which the Delorme-Poljak eigenvalue bound is tight, or equivalently, the GW-relaxation is exact, in \Cref{knowngraphs}.

In this manuscript, we refer to the uniqueness of the optimal solution in the context of the primal problem (see \Cref{relax}); the dual optimal solution is always unique \citep{delorme1993alaplacian, de2019strict}. We call a graph~$G$ (or a family of graphs) \emph{exact} if the GW-relaxation is exact, or equivalently, if the Delorme--Poljak bound is tight. Although some exact graph families are known (summarized in \Cref{knowngraphs}), the exactness of the Max-Cut SDP relaxation is not fully understood. This manuscript addresses the following aspects of SDP exactness for the Max-Cut problem:

\begin{table}[h]
\centering
\caption{Known graph families for which the Max-Cut SDP relaxation is exact.\vspace*{0.5mm}}
\label{knowngraphs}
\begin{tabular}{p{4cm} p{9cm}}
\hline
\textbf{Source} & \textbf{Exact graph families} \\
\hline
Delorme--Poljak~\citep{delorme1993combinatorial,delorme1993alaplacian,delorme1993performance} 
& Cartesian product of two exact graphs; 
Tensor product $K_m \times K_n$ (exact if $n \leq m$, $m$ even); 
Line graphs of bipartite $(r,s)$-semiregular graphs ($r,s$ even); 
Line graphs of $K_{4r+1}$; 
Complement of line graphs of bipartite $G(m,n)$ (exact if $m \leq n$, $m$ even); 
Graph $G$ exact if all its bi-connected components are exact; 
Amalgam $G''$ of exact $G,G'$ exact if Max-Cut partitions coincide on $V \cap V'$; 
Cayley graphs for $\mathcal{G}=\mathbb{Z}_2^m$ with weights $w_{uv}=w_{u+v}$. \\[4pt]
\midrule
Laurent--Poljak~\citep{laurent1995positive} 
& Weighted complete graphs $K_n$ with weights $w_{ij}=m_i m_j$; exact if and only if either there exists $S \subseteq [n]$ such that $\sum_{j \in S} m_j =  \sum_{i \notin S} m_i$ or there exists $j$ such that $m_j > \sum_{i \neq j} m_i$. \\[4pt]
\midrule
Karloff~\citep{karloff1996good} 
&  Let, $J(m,t,b)$ be a graph such that: 
    \begin{enumerate}
        \item  Vertex set is the set of all $\binom{m}{t}$ t-element subsets of $\{1,2, \ldots, m\}$.
        \item  Two subsets $S$ and $T$ are adjacent if and only if $\left | S \cap T \right | = b$ and $S \neq T$.
    \end{enumerate}
    Then, $J(m,t,b)$ is exact if $t = \dfrac{m}{2}$, $b \leq \dfrac{m}{12}$ and $m$ is even.  \\[4mm]
\midrule
Hong et al.~\citep{hong}
& Weighted cycle graphs $C_n$ are exact if and only if at least one of the following is true:
        \begin{enumerate}
            \item There are an even number of positively weighted edges in $C_n$.
            \item There exists an $m$ such that $\frac{1}{\left|w_{m,m+1}\right|} \geq \sum\limits_{i \neq m} \frac{1}{\left|w_{i,i+1}\right|}$.
        \end{enumerate}
        They also showed uniqueness of the SDP solution. \\[4pt]
\midrule
Mirka--Williamson~\citep{mirka2024max} 
& Non-negatively weighted bipartite graphs (uniqueness of rank-1 solution).\\ 
\hline
\end{tabular}
\end{table}
\begin{enumerate}
    
    \item \textit{New exact families with uniqueness conditions.} We identify a few families of graphs for which the GW-relaxation is exact. For each of these graph families, we provide conditions that guarantee uniqueness of the optimal primal solution. We derive a rank equality for optimal primal--dual pairs as a key tool in establishing uniqueness. This expands the study of uniqueness of the rank-1 SDP optimum beyond the previously known cases of weighted cycles \citep{hong} and bipartite graphs \citep{mirka2024max}. For each class, we determine the optimal objective value by explicitly constructing a maximum cut. In \Cref{tab:exact-examples}, we present a few representative classes of graphs for which the exactness of the GW-relaxation, and uniqueness of the optimal solution can be explained by our results; many additional families can be derived similarly.

\begin{table}[h]
\centering
\caption{\centering Examples of graph classes for which our results explain exactness of Max-Cut SDP relaxation and uniqueness of the solution.\\[2mm]}
\label{tab:exact-examples}
\begin{tabular}{p{6.75cm}P{5.3cm}}
\toprule 
\textbf{Graph family} & \textbf{Unique? (Reference)}\\
\bottomrule\\\addlinespace[-1em]
$C_{m}\vee C^c_{m}$ & Yes (\Cref{claim:1}) \\[2pt]
$K_{2m }\vee C_{2n}$ with $n \ge m \ge 2$ & Yes (\Cref{claim:2}) \\[2pt]
$C_m \vee \overline{K_n}$ (m-gonal n-cone), with $n \geq 4$ & Yes (\Cref{claim:2}) \\[2pt]
$P_m \vee \overline{K_n}$ (fan graph), with $n \geq 4$ & Yes (\Cref{claim:2}) \\ [2pt]
$(P_{n} \vee C_n) \vee K(m-1,m+1,2m)$ & No  (\Cref{claim:3})\\[2pt]
$C_4\vee K(m_1,m_2,m_3)$, with $m_1+m_2 = m_3$ & No (\Cref{claim:3})\\ [2pt]
$K(1,2,3,4) \bullet G$ where $G$ is any unweighted graph & No (\Cref{prop1,cor:complete}, \Cref{thm:1})\\ 
\hline
\end{tabular}
\end{table}

\item \textit{Exactness under graph operations.} We prove that the \textit{graph join} of two graphs with the same number of vertices is exact. We show that if the first factor in the \textit{lexicographic product} of a finite collection of graphs is exact, then the rank of its optimal solution(s) is preserved, ensuring that the resulting product graph remains exact. Furthermore, we prove that a graph is exact if and only if its \textit{vertex-split graph} is exact, with the vertex-splitting operation also preserving the rank of optimal solutions. Using these results, we illustrate how exactness of Max-Cut SDP relaxation on a large graph may be dictated by the presence of a small structural core.
    
\item \textit{Higher-rank phenomena in exact graphs.} We address open questions of Mirka--Williamson~\citep{mirka2024max} by showing that, for certain classes of graphs, higher-rank optimal solutions necessarily occur even when the SDP relaxation is exact. Within these graph families, we demonstrate that uniqueness of the maximum cut partition in exact relaxation does not imply uniqueness of the SDP optimum, and that exact relaxation with multiple maximum cut partitions may admit optimal points lying outside the convex hull of rank-1 reference solutions. These observations may lay the groundwork for describing the convex hull of optimal solutions in exact GW-relaxation. Finally, we fully characterize the conditions for exactness and uniqueness of the rank-1 SDP optimal solution for complete $k$-partite graphs.
\end{enumerate}

\begin{figure}
    \centering
\scalebox{0.7}{%
\begin{tikzpicture}
  \begin{scope}[every node/.style={circle,fill,draw,inner sep=0, minimum size=0.3em}]
    \node (A1) at (3.0000, 0.0000) {};
    \node (A2) at (2.656371, 1.394168) {};
    \node (A3) at (1.701805, 2.470597) {};
    \node (B1) at (0.330667, 2.981721) {};
    \node (B2) at (-1.114987, 2.785099) {};
    \node (B3) at (-2.345492, 1.870471) {};
    \node (C1) at (-2.924785, 0.667563) {};
    \node (C2) at (-2.924785, -0.667563) {};
    \node (C3) at (-2.345492, -1.870471) {};
    \node (D1) at (-1.114987, -2.785099) {};
    \node (D2) at (0.330667, -2.981721) {};
    \node (D3) at (1.701805, -2.470597) {};
    \node (E) at (2.656371, -1.394168) {};
  \end{scope}
  \begin{scope}[>={Stealth[black]},
                  every node/.style={},
                  every edge/.style={draw=black,thick}]
        \path [-] (A1) edge[cedge=cyan] node {} (B1);
        \path [-] (A1) edge[cedge=cyan] node {} (B2);
        \path [-] (A1) edge[cedge=cyan] node {} (B3);
        \path [-] (A1) edge[cedge=cyan] node {} (C1);
        \path [-] (A1) edge[cedge=cyan] node {} (C2);
        \path [-] (A1) edge[cedge=cyan] node {} (C3);
        \path [-] (A1) edge[cedge=cyan] node {} (D1);
        \path [-] (A1) edge[cedge=cyan] node {} (D2);
        \path [-] (A1) edge[cedge=cyan] node {} (D3);
        \path [-] (A1) edge[cedge=green!40!gray] node {} (E);
        \path [-] (A2) edge[cedge=red!40!gray] node {} (A3);
        \path [-] (A2) edge[cedge=cyan] node {} (B1);
        \path [-] (A2) edge[cedge=cyan] node {} (B2);
        \path [-] (A2) edge[cedge=cyan] node {} (B3);
        \path [-] (A2) edge[cedge=cyan] node {} (C1);
        \path [-] (A2) edge[cedge=cyan] node {} (C2);
        \path [-] (A2) edge[cedge=cyan] node {} (C3);
        \path [-] (A2) edge[cedge=cyan] node {} (D1);
        \path [-] (A2) edge[cedge=cyan] node {} (D2);
        \path [-] (A2) edge[cedge=cyan] node {} (D3);
        \path [-] (A2) edge[cedge=green!40!gray] node {} (E);
        \path [-] (A3) edge[cedge=cyan] node {} (B1);
        \path [-] (A3) edge[cedge=cyan] node {} (B2);
        \path [-] (A3) edge[cedge=cyan] node {} (B3);
        \path [-] (A3) edge[cedge=cyan] node {} (C1);
        \path [-] (A3) edge[cedge=cyan] node {} (C2);
        \path [-] (A3) edge[cedge=cyan] node {} (C3);
        \path [-] (A3) edge[cedge=cyan] node {} (D1);
        \path [-] (A3) edge[cedge=cyan] node {} (D2);
        \path [-] (A3) edge[cedge=cyan] node {} (D3);
        \path [-] (A3) edge[cedge=green!40!gray] node {} (E);
        \path [-] (B1) edge[cedge=red!40!gray] node {} (B2);
        \path [-] (B1) edge[cedge=cyan] node {} (C1);
        \path [-] (B1) edge[cedge=cyan] node {} (C2);
        \path [-] (B1) edge[cedge=cyan] node {} (C3);
        \path [-] (B1) edge[cedge=cyan] node {} (D1);
        \path [-] (B1) edge[cedge=cyan] node {} (D2);
        \path [-] (B1) edge[cedge=cyan] node {} (D3);
        \path [-] (B1) edge[cedge=green!40!gray] node {} (E);
        \path [-] (B2) edge[cedge=red!40!gray] node {} (B3);
        \path [-] (B2) edge[cedge=cyan] node {} (C1);
        \path [-] (B2) edge[cedge=cyan] node {} (C2);
        \path [-] (B2) edge[cedge=cyan] node {} (C3);
        \path [-] (B2) edge[cedge=cyan] node {} (D1);
        \path [-] (B2) edge[cedge=cyan] node {} (D2);
        \path [-] (B2) edge[cedge=cyan] node {} (D3);
        \path [-] (B2) edge[cedge=green!40!gray] node {} (E);
        \path [-] (B3) edge[cedge=cyan] node {} (C1);
        \path [-] (B3) edge[cedge=cyan] node {} (C2);
        \path [-] (B3) edge[cedge=cyan] node {} (C3);
        \path [-] (B3) edge[cedge=cyan] node {} (D1);
        \path [-] (B3) edge[cedge=cyan] node {} (D2);
        \path [-] (B3) edge[cedge=cyan] node {} (D3);
        \path [-] (B3) edge[cedge=green!40!gray] node {} (E);
        \path [-] (C1) edge[cedge=red!40!gray] node {} (C2);
        \path [-] (C1) edge[cedge=red!40!gray] node {} (C3);
        \path [-] (C2) edge[cedge=red!40!gray] node {} (C3);
        \path [-] (D1) edge[cedge=red!40!gray] node {} (D3);
        \path [-] (D1) edge[cedge=cyan] node {} (D2);
        \path [-] (D2) edge[cedge=red!40!gray] node {} (D3);
        \path [-] (C1) edge[cedge=green!40!gray] node {} (E);
    \end{scope}
    \end{tikzpicture}} 
\caption{\centering \small Graph $\mathcal{G}$ with edge colors indicating weights: \textcolor{red!40!gray}{Red} (1), \textcolor{cyan}{Blue} (2) \& \textcolor{green!40!gray}{Green} (6).}
    \label{fig:GRAPHEXAMPLE}
\end{figure}

\begin{example}\label{example1}
Consider the graph $\mathcal{G}$ in \Cref{fig:GRAPHEXAMPLE}. Can we examine the structure of $\mathcal{G}$ and decompose it into simpler components to reveal whether $\mathcal{G}$ is exact? It is thus, we are motivated to explore the following: Given a graph, can its exactness be inferred from its structure? In this manuscript, we contribute to the set of tools and techniques to address this question.
\end{example} 

\section{Preliminaries}\label{sec3}

$\mathbb{R}^n$, $\mathbb{Z}^n$, $\mathbb{R}_{>0}^n$, and $\mathbb{Z}_{>0}^n$ denote the sets of $n$-dimensional vectors with real, integer, positive real, and positive integer entries, respectively. $[n] := \{1, 2, \ldots, n\}$. For a matrix $\mathbf{A} \in \mathbb{R}^{n \times n}$, the notation $\mathbf{A} \succeq \mathbf{0}$ indicates that $\mathbf{A}$ is positive semidefinite. $\mathcal{N}(\mathbf{A})$, $\operatorname{tr}(\mathbf{A})$, and $\lambda_{\max}(\mathbf{A})$ denote the nullspace, trace, and largest eigenvalue of $\mathbf{A}$, respectively. Given a subset $S \subseteq V$, we denote by $\mathbbm{1}_S \in \{0,1\}^{|V|}$ the indicator vector of $S$, i.e. $i$th entry of $\mathbbm{1}_S$ equals $1$ if $i \in S$ and $0$ otherwise. $\langle \cdot , \cdot \rangle$ denotes the Frobenius inner product in the space of symmetric matrices, defined by $\langle \mathbf{A}, \mathbf{B} \rangle = \operatorname{tr}(\mathbf{A} \mathbf{B})$. Given a graph $G$, $\mathrm{MAXCUT}(G)$ denotes the value of the maximum cut, and $G_1 \cong G_2$ indicates that graphs $G_1$ and $G_2$ are isomorphic. The complete $k$-partite graph is denoted by $K(m_1, m_2, \ldots, m_k)$, where $m_1 \le m_2\leq \ldots \leq m_k$ are the cardinalities of its independent sets. $K_n$, $\overline{K}_n$, $C_n$, and $P_n$ denote the complete graph, the edgeless graph, the cycle graph, and the path graph on $n$ vertices, respectively. Let $G^c$ denote the complement graph of $G$. We say that the complete graph $K_n$ is \emph{weighted by} $\bm \in \mathbb{R}_{>0}^n$ if its edge weights satisfy $w_{ij} = m_i m_j$ for all $i, j \in [n]$. A graph $G = (V, E)$ is said to be \emph{unweighted} if $w_{ij} = 1$ for all $(i,j) \in E$. Unless stated otherwise, all graphs considered in this manuscript are simple and unweighted.\\

The Laplacian of a Graph $G$ (denoted by $\L_{G}$) is the matrix with elements $x_{ij}$ defined as: $x_{ij} = -w_{ij}$ if $i\neq j$ and $x_{ii} = \sum\limits_{i \neq j} w_{ij}$, where $i, j \in [n]$. We write $\deg_G(v)$ for the degree of $v$ in $G$. For $U \subseteq V$, we write $G[U]$ for the
subgraph of $G$ induced by $U$. The \textit{graph join} of two disjoint graphs \( G_1 = (V_1, E_1) \) and \( G_2 = (V_2, E_2) \), denoted as \( G_1 \vee G_2 \), is the graph: $G_1 \vee G_2 := (V_1 \cup V_2, E_1 \cup E_2 \cup \{ (u, v) \mid u \in V_1, v \in V_2 \} )$. Given a weighted graph $G (V,E, \mathbf{w})$ on $n$ vertices and integers $p_i \geq 1, i \in [n]$, the \textit{vertex-split graph} $\widetilde{G}$ is obtained by replacing each vertex $i$ with $p_i$ distinct copies. For each edge $ij \in E$ all possible edges are added between the copies of $i$ and the copies of $j$, and every edge has weight $w_{ij}/(p_i p_j)$, where $w_{ij}$ is the weight of edge $ij \in E$. The \textit{lexicographic product} of graphs $G_1=(V_1,E_1)$ and $G_2=(V_2,E_2)$ is the graph $G_1 \bullet G_2$ with vertex set $V_1 \times V_2$, where two vertices $(u,v)$ and $(u',v')$ are adjacent if either $uu' \in E_1$, or $u=u'$ and $vv' \in E_2$. When \(G_1\) and \(G_2\) are weighted graphs with the same uniform edge weight \(w\), we assign the same weight \(w\) to every edge of the lexicographic product \(G_1 \bullet G_2\). For a vector \(\bv \in \mathbb{R}^n\), we define \(\Diag(\bv) \in \mathbb{R}^{n \times n}\) as the diagonal matrix with \((i,i)\)-entry equal to \(v_i\) and all off-diagonal entries zero.

\subsection{Relaxations for Max-Cut}\label{relax}
The Max-Cut problem can be formulated as a quadratic integer program, which upon vector lifting yields the GW-relaxation. In its standard primal form, the relaxation is given by:

\begin{optprog*}\label{mcut}
    maximize $z_{\text{MAXCUT}}$ = &\objective{\frac{1}{4} \langle \L_G,\mathbf{X} \rangle}\\
    (MAXCUT SDP) \qquad subject to & X_{ii} = 1,~~ \forall i \in V\\
    & \mathbf{X} \succeq \textbf{0} \hspace*{1.4cm}
\end{optprog*}
Imposing $\operatorname{rank}(\X)=1$ recovers the exact quadratic integer program.\\[2mm]$\mathrm{MAXCUT\ SDP}$ has the following dual:

\begin{optprog*}
    minimize $z_{\mathrm{MAXCUTDUAL}}$ =
        & \objective{\sum_{i \in V} y_i} \\
    (MAXCUT DUAL SDP) \qquad subject to
        & \begin{aligned}[t]
            \mathbf{S} &= \operatorname{Diag}(y) - \frac{1}{4}\L_G, \\
            \mathbf{S} &\succeq \mathbf{0}, \\
            y &\in \mathbb{R}^{|V|}.
          \end{aligned}
          \label{prob:maxcutdual}
\end{optprog*}

The $\mathrm{MAXCUT\ SDP}$ satisfies strong duality \citep{vanboyd}, ensuring equality between its primal and dual optima, as well as complementary slackness. The GW-relaxation can be expressed in an equivalent eigenvalue form~\citep{poljak1995nonpolyhedral}, often referred to as the Delorme--Poljak formulation. Let $\phi(G)$ denote its optimal value. $\phi(G)$ exhibits the following  properties~\citep{delorme1993alaplacian,delorme1993combinatorial, delorme1993performance}, which, by equivalence, are inherited by the $\mathrm{MAXCUT\ SDP}$ optimum $z^*_{\mc}$. 

\begin{itemize}
    \item \textbf{Upper bound:} $\phi(G) \geq \mathrm{MAXCUT(G)}$.
    \item \textbf{Scaling:} If each weight $w_{ij}$ of $G$ is multiplied by some positive real $k$, then $\phi(G)$ is also multiplied by $k$.
    \item \textbf{Monotonicity:} If $G$ and $G'$ have the same vertices and the weight functions $w$ and $w'$ satisfy $w_{ij} \leq w'_{ij}$ for all pairs of vertices, then $\phi(G) \leq \phi(G')$.
    \item \textbf{Vertex-Split invariance:} $\phi(\widetilde{G}) = \phi(G)$ for every vertex-split graph $\widetilde{G}$ of $G$. 
\end{itemize}

\section{Main Results}\label{sec4}

In this section, we present new families of unweighted graphs for which the $\mathrm{MAXCUT\ SDP}$ is exact. We also provide condition for uniqueness of the optimal rank-1 solution for each class. We adopt the following setup for this section. Let $G_K$ be the complete bipartite graph $K(m_1,m_2)$ with bipartition $(V_A,V_B)$, where $|V_A| = m_1$ and $|V_B| = m_2$. Let $G_A = (V_A,E_A)$ and $G_B = (V_B,E_B)$ be graphs defined on the vertex sets $V_A$ and $V_B$, respectively. We define $G' \;=\; G_K \cup G_A \cup G_B$.\vspace{1mm}

We begin with the balanced case, i.e.\ \(m_1=m_2=n\). While the complete bipartite graph \( K(n,n) \) is exact and admits a unique optimal solution, the complete graph \( K_{2n} \) is also exact (see \Cref{knowngraphs}) but admits multiple rank-1 optimal solutions corresponding to distinct \( n \)-by-\( n \) bipartitions. Thus, as edges are added within the two independent sets \(V_A\) and \(V_B\) of $K(n,n)$, exactness of the relaxation persists but the uniqueness property can be lost. \Cref{prop1} addresses this and identifies a sufficient condition when uniqueness is retained. \ignore{We give a sketch of the proof of \Cref{claim:1} and defer the full proof to \Cref{proof1}.}

\begin{proposition}\label{prop1}
    \label{claim:1}
    $G'$ is exact if $|V_A| = |V_B|$. Furthermore, the optimal solution of $\mathrm{MAXCUT\ SDP}$ for such $G'$  is unique if and only if at least one of $G_A^c$ and $G_B^c$ is connected.
\end{proposition}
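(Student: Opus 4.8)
The plan is to separate exactness from uniqueness; throughout, write $n := |V_A| = |V_B|$ and note that $G'$ is exactly the graph join $G_A \vee G_B$, a graph on $2n$ vertices with complement $\overline{G'} = G_A^{c} \sqcup G_B^{c}$. For exactness I would produce a matching primal--dual pair. On the primal side, take the cut $(V_A, V_B)$: the rank-one point $\X_0 := \x_0\x_0^{\top}$ with $\x_0 = \ind_{V_A} - \ind_{V_B}$ is feasible for $\mathrm{MAXCUT\ SDP}$ and cuts precisely the $n^2$ edges between $V_A$ and $V_B$; writing $\L_{G'}$ in block form along $(V_A, V_B)$ and using that the row sums of $\L_{G_A}$ and $\L_{G_B}$ vanish, a short computation gives $\L_{G'}\x_0 = 2n\,\x_0$, so the primal objective at $\X_0$ equals $\tfrac14\x_0^{\top}\L_{G'}\x_0 = n^2$. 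On the dual side, take $\Y^{\star} := \tfrac n2\I$, so $\bS^{\star} := \Y^{\star} - \tfrac14\L_{G'} = \tfrac14(2n\I - \L_{G'})$; since $G'$ has $2n$ vertices we have $\L_{G'} \preceq \L_{K_{2n}} = 2n\I - J$ (with $J$ the all-ones matrix), hence $\lambda_{\max}(\L_{G'}) \le 2n$, $\bS^{\star} \succeq \bz$, and $\tr\Y^{\star} = n^2$. Weak duality then forces the common optimal value to $n^2$, makes $\X_0$ primal optimal, and shows the relaxation is exact (in passing, $\mc(G') = n^2$, attained by $(V_A, V_B)$).

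For uniqueness I would combine complementary slackness with uniqueness of the dual optimum \cite{de2019strict}: every primal optimal $\X$ satisfies $\bS^{\star}\X = \bz$, that is $\range(\X) \subseteq \mathcal{N}(\bS^{\star})$, where $\mathcal{N}(\bS^{\star})$ is the eigenspace of $\L_{G'}$ for the eigenvalue $2n$. Using $\L_{G'} + \L_{\overline{G'}} = 2n\I - J$ together with the fact that every such eigenvector is orthogonal to the all-ones vector $\1$, this eigenspace equals $\mathcal{N}(\L_{\overline{G'}}) \cap \1^{\perp}$. Since $G'$ is symmetric under $A \leftrightarrow B$, assume without loss of generality that $G_A^{c}$ is connected; then the connected components of $\overline{G'}$ are $V_A$ together with the components $W_1, \dots, W_c$ of $G_B^{c}$, so $\mathcal{N}(\L_{\overline{G'}})$ is spanned by $\ind_{V_A}, \ind_{W_1}, \dots, \ind_{W_c}$, and intersecting with $\1^{\perp}$ produces the explicit basis $q_l := \ind_{W_l} - \tfrac{|W_l|}{n}\ind_{V_A}$, $l = 1, \dots, c$, of $\mathcal{N}(\bS^{\star})$, with $\sum_{l} q_l = \ind_{V_B} - \ind_{V_A} = -\x_0$.

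It remains to pin $\X$ down. Writing a primal optimal $\X$ as $\X = \sum_{l,m} N_{lm}\,q_l q_m^{\top}$ with $N \succeq \bz$, the unit-diagonal constraint evaluated at a vertex of $W_p$ gives $N_{pp} = 1$, and evaluated at a vertex of $V_A$ gives $k^{\top}Nk = n^2$, where $k := (|W_1|, \dots, |W_c|)^{\top}$ and $\sum_l |W_l| = n$. Realizing $N$ as a Gram matrix of unit vectors, the identity $k^{\top}Nk = (\sum_l|W_l|)^2$ becomes the equality case of the triangle inequality, forcing those vectors to coincide, i.e. $N = J_c$; hence $\X = (\sum_l q_l)(\sum_l q_l)^{\top} = \x_0\x_0^{\top}$, which is the asserted uniqueness. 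The step I expect to be most delicate is the explicit description of $\mathcal{N}(\bS^{\star})$---verifying that connectedness of just one of $G_A^{c}, G_B^{c}$ collapses the spanning set to the $q_l$ and then that the $V_A$-diagonal equations genuinely force $N$ to be the all-ones matrix; the block computation $\L_{G'}\x_0 = 2n\,\x_0$, the bound $\lambda_{\max}(\L_{G'}) \le 2n$, and the triangle-inequality step are routine.
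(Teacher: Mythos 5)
Your proposal is correct, and the exactness half is essentially the paper's argument: same primal cut $\x_0=\ind_{V_A}-\ind_{V_B}$ with value $n^2$, same dual $\Y^*=\tfrac n2\I_{2n}$, and the same observation $\lambda_{\max}(\L_{G'})\le 2n$ certifying $\bS^*\succeq\bz$. The uniqueness half, however, takes a genuinely different route. The paper works with the block form of $\bS^*$, combines the rank identity of \Cref{sylvester} with the pseudo-Schur complement rank inequality to obtain $\rank(\X^*)\le 1+\min\{k_A,k_B\}$ (where $k_A,k_B$ are the nullities of the diagonal blocks $\tfrac14(n\I-\L_{G_A})$, $\tfrac14(n\I-\L_{G_B})$), identifies nonsingularity of a block with connectedness of the corresponding complement via the Laplacian complement identity, and then rules out multiple rank-1 optima by a convex-combination argument. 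You instead describe $\mathcal{N}(\bS^*)$ explicitly: it is the $2n$-eigenspace of $\L_{G'}$, equal to $\mathcal{N}(\L_{\overline{G'}})\cap\1^\perp$ with $\overline{G'}=G_A^c\sqcup G_B^c$, spanned by the vectors $q_l=\ind_{W_l}-\tfrac{|W_l|}{n}\ind_{V_A}$ built from the components of $G_B^c$; the unit-diagonal constraints then force the Gram coefficient matrix $N$ to satisfy $N_{pp}=1$ and $k^\top Nk=(\sum_l|W_l|)^2$, and the equality case of the triangle inequality pins down $N$ as the all-ones matrix, hence $\X=\x_0\x_0^\top$ directly. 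Your argument is sound (including the facts that adding edges only increases the Laplacian in the PSD order, and that complementary slackness against your constructed $\bS^*$ already follows from the zero duality gap, so the citation of dual uniqueness is dispensable). What each approach buys: the paper's machinery (rank identity plus Schur-complement bound) is modular and is reused verbatim for \Cref{claim:2}, and the intermediate bound $\rank(\X^*)\le 1+\min\{k_A,k_B\}$ is of independent interest; your argument gives a sharper structural picture in one pass, explicitly parametrizing the whole optimal face by the components of $\overline{G'}$ (which would also describe the multiple optima when both complements are disconnected) and identifying the unique optimum without the detour through a rank bound and a convexity argument.
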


\begin{corollary}
The graph join of any two graphs with the same number of vertices is exact. In particular, $G \vee G$ is exact. Moreover, $G \vee G$ admits a unique optimal $\mathrm{MAXCUT\ SDP}$ solution if $G^c$ is connected.
\end{corollary}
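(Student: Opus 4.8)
The plan is to recognize the graph join as a special case of the construction $G'$ analyzed in \Cref{prop1} and then invoke that proposition directly. Concretely, let $G_1 = (V_1,E_1)$ and $G_2 = (V_2,E_2)$ be vertex-disjoint graphs with $|V_1| = |V_2| = n$. I would set $V_A := V_1$, $V_B := V_2$, $G_A := G_1$, $G_B := G_2$, and let $G_K$ be the complete bipartite graph on the bipartition $(V_A,V_B)$, i.e.\ $G_K \cong K(n,n)$. Unwinding the definition of the graph join, the edge set of $G_1 \vee G_2$ is $E_1 \cup E_2 \cup \{(u,v) : u \in V_1,\, v \in V_2\}$, which is precisely $E_A \cup E_B$ together with all edges of $G_K$; hence $G_1 \vee G_2 = G_K \cup G_A \cup G_B = G'$ in the notation preceding \Cref{prop1}. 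Since $|V_A| = |V_B| = n$, \Cref{prop1} immediately gives that $G_1 \vee G_2$ is exact, and specializing to $G_1 = G_2 = G$ yields that $G \vee G$ is exact.

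For the uniqueness statement, I would apply the same identification with $G_A = G_B = G$, so that $G_A^c = G_B^c = G^c$. The sufficient condition for uniqueness in \Cref{prop1} — that at least one of $G_A^c$, $G_B^c$ be connected — then collapses to the single requirement that $G^c$ be connected, in which case \Cref{prop1} guarantees that the optimal $\mathrm{MAXCUT\ SDP}$ solution for $G \vee G$ is unique.

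Since the statement is an immediate corollary of \Cref{prop1}, I do not anticipate any genuine obstacle. The only point requiring a little care is the bookkeeping in the first step: verifying that the cross edges introduced by the join coincide exactly with the full edge set of $K(n,n)$ on the chosen bipartition, and that no edge is double-counted — which is automatic because $V_1$ and $V_2$ are disjoint and $G_1, G_2$ contribute only edges internal to their own vertex sets. Once this identification is in place, both parts follow verbatim from \Cref{prop1}.
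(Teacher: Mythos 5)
Your proposal is correct and matches the paper's intent exactly: the corollary is stated as an immediate consequence of \Cref{prop1}, obtained by identifying $G_1 \vee G_2$ with $G' = G_K \cup G_A \cup G_B$ in the balanced case $|V_A| = |V_B|$ and specializing the connectedness condition to $G_A^c = G_B^c = G^c$. No further argument is needed beyond the bookkeeping you carried out.
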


\begin{corollary}\label{claim:3}
    $G'$ is exact when both $G_A$ and $G_B$ contain balanced complete bipartite spanning subgraphs; that is, $G_A$ contains $K(|V_A|/2,\; |V_A|/2)$ and $G_B$ contains $K(|V_B|/2,\;|V_B|/2)$. Moreover, a higher-rank optimal $\mathrm{MAXCUT\ SDP}$ solution always exists for such $G'$. 
\end{corollary}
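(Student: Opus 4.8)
The plan is to realize $G'$ as a graph join of two graphs of equal order---so that exactness follows at once from \Cref{prop1}---and then to exhibit two distinct maximum cuts whose rank-$1$ lifts average to a rank-$2$ optimum. Assume $G_A,G_B$ are non-null, so $|V_A|,|V_B|\ge 2$, and write $p=|V_A|/2$ and $q=|V_B|/2$ (integers by hypothesis). Fix a balanced bipartition $V_A=A_1\sqcup A_2$ realizing the assumed copy of $K(p,p)$ in $G_A$ (so every pair with one endpoint in $A_1$ and one in $A_2$ is an edge of $G_A$), and likewise $V_B=B_1\sqcup B_2$ realizing $K(q,q)\subseteq G_B$. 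Put $H_1:=G'[A_1\cup B_1]$ and $H_2:=G'[A_2\cup B_2]$.

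First I would establish the identity $G'=H_1\vee H_2$. Every edge of $G'$ lies inside $A_1\cup B_1$, inside $A_2\cup B_2$, or between these two sets; conversely, every pair with one endpoint in $A_1\cup B_1$ and the other in $A_2\cup B_2$ is an edge of $G'$---a pair between $A_1$ and $A_2$ because $K(p,p)\subseteq G_A$, a pair between $A_1$ and $B_2$ or between $A_2$ and $B_1$ because $G_K$ is complete bipartite, and a pair between $B_1$ and $B_2$ because $K(q,q)\subseteq G_B$. Hence the cross edges of $H_1\vee H_2$ are exactly the edges of $G'$ joining the two parts, so $G'=H_1\vee H_2$. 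Since $|V(H_1)|=|V(H_2)|=p+q=:N$, \Cref{prop1} (with $H_1,H_2$ playing the roles of $G_A,G_B$) shows $G'$ is exact; in particular $z^*_{\mc}=\mc(G')$.

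Next I would show $\mc(G')=N^2$. The cut $(A_1\cup B_1,\,A_2\cup B_2)=(V(H_1),V(H_2))$ crosses all $N^2$ join edges, so $\mc(G')\ge N^2$. Conversely, for an arbitrary cut $(S,\overline S)$ put $s_i=|S\cap V(H_i)|$: the join edges crossing it number $s_1(N-s_2)+(N-s_1)s_2=N(s_1+s_2)-2s_1s_2$, while the edges of $H_i$ crossing it number at most $s_i(N-s_i)$, so the cut value is at most $2N(s_1+s_2)-(s_1+s_2)^2\le N^2$. Thus $(A_1\cup B_1,A_2\cup B_2)$ is a maximum cut. So is $(A_1\cup B_2,\,A_2\cup B_1)$: a direct count shows it crosses all $p^2$ edges between $A_1$ and $A_2$, all $q^2$ edges between $B_1$ and $B_2$, the $pq$ edges of $G_K$ between $A_1$ and $B_1$, the $pq$ edges of $G_K$ between $A_2$ and $B_2$, and no others, for a total of $(p+q)^2=N^2$. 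Because $B_1$ and $B_2$ are nonempty and disjoint, these two partitions are distinct.

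Finally, let $x_1,x_2\in\{\pm1\}^{2N}$ be the sign vectors of these two maximum cuts. By exactness both $X_1=x_1x_1^\top$ and $X_2=x_2x_2^\top$ are optimal for $\mathrm{MAXCUT\ SDP}$, hence so is the feasible matrix $\tfrac{1}{2}(X_1+X_2)$ (unit diagonal, positive semidefinite). Since the two cuts induce distinct partitions, $x_1\neq\pm x_2$, so $x_1$ and $x_2$ are linearly independent and $\rank\bigl(\tfrac{1}{2}(X_1+X_2)\bigr)=2$; this is the desired higher-rank optimum. The two steps needing care are the edge bookkeeping behind $G'=H_1\vee H_2$ and the quadratic estimate yielding $\mc(G')\le N^2$; everything else is routine, and the hypothesis that $G_A$ and $G_B$ each contain a balanced complete bipartite spanning subgraph is precisely what makes the two exhibited cuts simultaneously maximum yet distinct.
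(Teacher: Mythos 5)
Your proposal is correct and follows essentially the same route as the paper: exactness comes from realizing $G'$ as a balanced join (equivalently, exhibiting a spanning $K(N,N)$) so that \Cref{prop1} applies, and the higher-rank optimum comes from the two distinct maximum cuts $(A_1\cup B_1,\,A_2\cup B_2)$ and $(A_1\cup B_2,\,A_2\cup B_1)$ — exactly the paper's "flip the signs in one block" — whose rank-1 lifts average to a rank-2 optimal solution. Your extra bookkeeping (the combinatorial bound $\mc(G')\le N^2$ and the explicit edge count for the second cut) just makes explicit what the paper leaves to the reader via the value computed in the proof of \Cref{prop1}.
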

\begin{figure}
\begin{center}
\begin{tikzpicture}[scale=0.70, transform shape]

\begin{scope}[xshift=-4.5cm, yshift=0cm]
    \begin{scope}[every node/.style={circle,thick,draw}]
        \node (A) at (0,2) {A};
        \node (B) at (2.7,2.5) {B};
        \node (C) at (5.3,2) {C};
        \node (F) at (0,-0.9) {F};
        \node (E) at (2.7,-1.4) {E};
        \node (D) at (5.3,-0.9) {D};
    \end{scope}

    \begin{scope}[every edge/.style={draw=black,very thick}]
        \foreach \u in {A,B,C}
            \foreach \v in {D,E,F}
                \draw (\u) -- (\v);
        \draw (A)--(B);
        \draw (D)--(E);
        \draw (B)--(C);
        \draw (F)--(E);
    \end{scope}

    \node at (2.7,-2.2) {(a)};
\end{scope}

\begin{scope}[xshift=4.5cm, yshift=0cm]
    \begin{scope}[every node/.style={circle,thick,draw}]
        \node (A) at (0,2) {A};
        \node (B) at (2.7,2.5) {B};
        \node (C) at (5.3,2) {C};
        \node (F) at (0,-0.9) {F};
        \node (E) at (2.7,-1.4) {E};
        \node (D) at (5.3,-0.9) {D};
    \end{scope}

    \begin{scope}[every edge/.style={draw=black,very thick}]
        \foreach \u in {A,B,C}
            \foreach \v in {D,E,F}
                \draw (\u) -- (\v);
        \draw (A)--(B);
        \draw (B)--(C);
        \draw (A)--(C);
        \draw (D)--(E);
    \end{scope}

    \node at (2.7,-2.2) {(b)};
\end{scope}

\begin{scope}[xshift=0cm, yshift=-4.8cm]
    \begin{scope}[every node/.style={circle,thick,draw}]
        \node (A) at (0,2) {A};
        \node (B) at (2.7,2.5) {B};
        \node (C) at (5.3,2) {C};
        \node (F) at (0,-0.9) {F};
        \node (E) at (2.7,-1.4) {E};
        \node (D) at (5.3,-0.9) {D};
    \end{scope}

    \begin{scope}[every edge/.style={draw=black,very thick}]
        \foreach \u in {A,B,C}
            \foreach \v in {E,F}
                \draw (\u) -- (\v);
        \draw (A)--(B);
        \draw (A)--(C);
        \draw (B)--(C);
        \draw (D)--(E);
        \draw (A)--(D);
        \draw (B)--(D);
    \end{scope}

    \node at (2.7,-2.2) {(c)};
\end{scope}

\end{tikzpicture}
\captionof{figure}{
Three graphs illustrating distinct MAXCUT SDP behavior:
(a) exact with non-unique optimum,
(b) exact with unique optimum,
(c) non-exact.
}\label{fig:structure_vs_size}
\end{center}
\end{figure}

The sensitivity of exactness and uniqueness of the $\mathrm{MAXCUT}$ SDP to graph structure
can be seen from \Cref{prop1}.
This phenomenon is illustrated in \Cref{fig:structure_vs_size}, which presents three graphs
on the same vertex set, differing only by minimal edge modifications.
The graph \Cref{fig:structure_vs_size}(b) is obtained from
\Cref{fig:structure_vs_size}(a) by replacing the edge $(E,F)$ with $(A,C)$.
While \Cref{prop1} implies both graphs to be exact, the MAXCUT SDP solution is unique for
\Cref{fig:structure_vs_size}(b) but not for \Cref{fig:structure_vs_size}(a). This is because $\x \x^{\top }$ such that $\x = \mathbbm{1}_{(A,C,E)}-\mathbbm{1}_{(B,D,F)}$ or $\x = \mathbbm{1}_{(A,B,C)}-\mathbbm{1}_{(D,E,F)}$ correspond to distinct optimal rank-1 solutions.
Similarly, \Cref{fig:structure_vs_size}(c) is obtained from
\Cref{fig:structure_vs_size}(b) by deleting the edge $(C,D)$;
this perturbation destroys exactness, as the resulting graph admits
a fractional optimal SDP solution.

Having characterized the case $m_1 = m_2$, we now turn to the setting
$m_1 \neq m_2$. When one part strictly dominates in size, the internal
connectivity of the larger part can influence the preservation of exactness.
The following proposition gives a sufficient condition.

\begin{proposition}\label{claim:2}
Let $|V_A| < |V_B|$. Suppose $\deg_{G[V_B]} (v) \leq \tfrac{1}{2}|V_A|$ for every $v \in V_B$. Then $G'$ is exact and the optimal $\mathrm{MAXCUT\ SDP}$ solution is unique.
\end{proposition}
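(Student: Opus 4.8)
The plan is to certify optimality of the rank-one point $\X^{*}=\x^{*}\x^{*\top}$ where $\x^{*}=\ind_{V_A}-\ind_{V_B}$, whose objective value equals $\tfrac14\x^{*\top}\L_{G'}\x^{*}=\bigl|\{uv\in E(G'):\x^{*}_u\neq\x^{*}_v\}\bigr|=m_1m_2$, the size of the cut $(V_A,V_B)$ (all join edges cross, no edge of $G_A$ or $G_B$ crosses). For the dual I would take the diagonal matrix $\Y$ with $Y_{ii}=\tfrac{m_2}{2}$ for $i\in V_A$ and $Y_{ii}=\tfrac{m_1}{2}$ for $i\in V_B$, so that $\tr\Y=m_1m_2$; this is exactly what complementary slackness with $\X^{*}$ forces. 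Since the only edges inside $V_A$ (resp.\ $V_B$) are those of $G_A$ (resp.\ $G_B$) and all $V_A$--$V_B$ pairs are edges of $G'$, a direct block computation gives
\[
\bS \;=\;\Y-\tfrac14\L_{G'}\;=\;\tfrac14\begin{pmatrix} m_2\I_{m_1}-\L_{G_A} & \bo_{m_1}\bo_{m_2}^{\top}\\[2pt] \bo_{m_2}\bo_{m_1}^{\top} & m_1\I_{m_2}-\L_{G_B}\end{pmatrix}\;=:\;\tfrac14\,\mathbf M .
\]
Thus the proposition reduces to (i) $\mathbf M\succeq\bz$ --- which, with strong duality and the matching objective values, makes $\X^{*}$ optimal, hence $G'$ exact with $\mathrm{MAXCUT}(G')=m_1m_2$ --- and (ii) uniqueness of the optimal primal point.

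For (i) I would estimate the quadratic form of $\mathbf M$ directly. For $(u,v)\in\R^{m_1}\times\R^{m_2}$, setting $p=\bo_{m_1}^{\top}u$ and $q=\bo_{m_2}^{\top}v$,
\[
\langle\mathbf M,(u,v)(u,v)^{\top}\rangle \;=\; u^{\top}(m_2\I-\L_{G_A})u \;+\; 2pq \;+\; v^{\top}(m_1\I-\L_{G_B})v .
\]
Now $|V_A|<|V_B|$ together with the spectral bound $\L_H\preceq|V(H)|\,\I$ gives $\L_{G_A}\preceq m_1\I\prec m_2\I$, and splitting $u$ along $\bo_{m_1}$ and its orthogonal complement shows the first summand is at least $\tfrac{m_2}{m_1}p^{2}$; the degree hypothesis gives $v^{\top}\L_{G_B}v=\sum_{jk\in E_B}(v_j-v_k)^2\le 2\sum_j\deg_{G_B}(j)\,v_j^2\le m_1\|v\|^2$, so the third summand is at least $\tfrac{m_1}{m_2}q^{2}$. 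Hence the form is at least $\tfrac{m_2}{m_1}p^2+2pq+\tfrac{m_1}{m_2}q^2=\bigl(\sqrt{m_2/m_1}\,p+\sqrt{m_1/m_2}\,q\bigr)^2\ge0$, so $\mathbf M\succeq\bz$. Both hypotheses enter here; with $|V_A|=|V_B|$ this argument degenerates, consistent with \Cref{prop1} being separate.

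For (ii): the dual optimum is unique, so every optimal $\X$ satisfies $\X\bS=\bz$, i.e.\ $\range(\X)\subseteq\mathcal N(\mathbf M)$. Since $m_2\I-\L_{G_A}$ is positive definite, a Schur-complement computation (using $(m_2\I-\L_{G_A})^{-1}\bo_{m_1}=\tfrac{1}{m_2}\bo_{m_1}$, so the complement is $m_1\I_{m_2}-\L_{G_B}-\tfrac{m_1}{m_2}\bo_{m_2}\bo_{m_2}^{\top}$) identifies $\mathcal N(\mathbf M)=\operatorname{span}(\x^{*})\oplus\{(\bz,w):w\in W\}$, where $W:=\ker(m_1\I-\L_{G_B})$ is the $m_1$-eigenspace of $\L_{G_B}$. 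In particular $\rank(\X)\le 1+\dim W$, so if $\dim W=0$ --- which holds whenever the degree bound is strict, e.g.\ for the fan and cone examples in \Cref{tab:exact-examples} --- then $\rank(\X)=1$ and $\X=\X^{*}$. For the general case I would invoke the unit-diagonal constraints. Each column of $\X$ indexed by $i\in V_A$ lies in $\mathcal N(\mathbf M)$, hence its $V_A$-block is a multiple of $\bo_{m_1}$; since $X_{ii}=1$, that multiple is $1$, so $X[V_A,V_A]=\bo_{m_1}\bo_{m_1}^{\top}$, and positive semidefiniteness then forces $\X=\left(\begin{smallmatrix}\bo_{m_1}\bo_{m_1}^{\top}&\bo_{m_1}s^{\top}\\ s\bo_{m_1}^{\top}&\X_{BB}\end{smallmatrix}\right)$ with $\X_{BB}\succeq ss^{\top}$, whence $|s_j|\le1$ from the unit diagonal of $\X_{BB}$. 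The same $V_A$-columns being in $\mathcal N(\mathbf M)$ force $s+\bo_{m_2}\in W$; but every $w\in W$ satisfies $w_j=-w_k$ on each edge $jk\in E_B$ and $w_j=0$ at every vertex of degree $<m_1/2$ (both from equality in the estimate above), so the vector $w=s+\bo_{m_2}$, whose entries lie in $[0,2]$, must be $\bz$, i.e.\ $s=-\bo_{m_2}$. Finally $\X_{BB}-\bo_{m_2}\bo_{m_2}^{\top}=\X_{BB}-ss^{\top}\succeq\bz$ has zero diagonal, hence vanishes, so $\X_{BB}=\bo_{m_2}\bo_{m_2}^{\top}$ and $\X=\x^{*}\x^{*\top}$.

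The step I expect to be the main obstacle is precisely the case $W\neq\bz$: $\L_{G_B}$ can have $m_1$ as an eigenvalue exactly when the degree bound is tight on a regular bipartite component of $G_B$ (e.g.\ $G_B$ an even cycle with $|V_A|=4$, which occurs inside $K_{2m}\vee C_{2n}$ at $m=2$), so a bare rank count does not suffice and one must propagate the unit-diagonal constraints through $\mathcal N(\mathbf M)$ using the sign-alternation property of $W$. The block form of $\bS$, the Laplacian spectral bounds, and the completion of the square are routine by comparison.
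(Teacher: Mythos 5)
Your proposal is correct, and while the exactness half coincides with the paper's --- the same cut vector $\x^{*}=\ind_{V_A}-\ind_{V_B}$, the same diagonal dual $\Y$, and the same block slack $\bS=\tfrac14\mathbf M$ --- your verification, and especially your uniqueness argument, take a genuinely different route. For $\bS\succeq\bz$ the paper works with the generalized Schur complement with respect to the possibly singular block $\tfrac14(m_1\I_{m_2}-\L_{G_B})$ (pseudo-inverse plus a range condition), whereas you bound the quadratic form directly and complete the square in $p=\1^{\top}u$, $q=\1^{\top}v$; the content is the same, though your lower bound on the $V_B$ summand tacitly reuses the $\1$ versus $\1^{\perp}$ splitting that you only spelled out for the $V_A$ summand, so state it. The real divergence is uniqueness: the paper combines the rank identity of \Cref{sylvester} with the rank inequality for the pseudo-Schur complement to get $\rank(\X^{*})\le k_B-k_0+1$, and then asserts that every vector $(\bz^{\top},\bw^{\top})^{\top}$ with $\bw\in\mathcal N(\B)$ annihilates every optimal $\X^{*}$, a claim it verifies only through orthogonality to $\x^{*}$. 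You instead determine $\mathcal N(\bS)$ exactly (Schur complement with respect to the nonsingular $V_A$ block), note that $\range(\X)\subseteq\mathcal N(\bS)$ for every optimal $\X$ by complementary slackness (dual uniqueness is not actually needed for this), and then propagate the unit-diagonal constraints: the $V_A$ columns force $X[V_A,V_A]=\1\1^{\top}$ and $s+\1_{m_2}\in W=\ker(m_1\I_{m_2}-\L_{G_B})$, and the equality analysis behind the degree bound (sign alternation of $W$ across edges of $G_B$, vanishing at vertices of degree below $m_1/2$) forces $s=-\1_{m_2}$ and hence $\X=\x^{*}\x^{*\top}$. What your route buys is a complete, self-contained treatment of the boundary case where the degree bound is tight and $m_1$ is a Laplacian eigenvalue of $G_B$ (e.g. $K_4\vee C_{2n}$), exactly where a bare rank count only yields $\rank(\X)\le 1+\dim W$ and where the paper's annihilation step is left implicit; what the paper's route buys is brevity and uniformity with the proof of \Cref{claim:1}.
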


Unlike \Cref{claim:1}, where the internal structure of $G_A$ and $G_B$ was irrelevant for exactness, \Cref{claim:2} imposes structure on the graph with the larger vertex set. The strength of this condition is that $G_B$ may have an arbitrarily larger vertex set than $G_A$, exactness persists as long as the maximum degree of $G_B$ is bounded by $\frac{1}{2}|V_A|$. One immediate special case is when the larger side is independent, as stated in the following corollary.

\begin{corollary}\label{roc}
$G'$ is exact if $V_B$ is an independent set and $|V_A| \leq |V_B|$. Moreover, the $\mathrm{MAXCUT\ SDP}$ solution is unique. 
\end{corollary}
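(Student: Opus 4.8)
The plan is to obtain \Cref{roc} directly from the two preceding propositions, splitting on whether the inequality $|V_A|\le|V_B|$ is strict. The key initial observation is that if $V_B$ is independent then $E_B=\emptyset$, so $\deg_{G[G_B]}(v)=0$ for every $v\in V_B$; hence the degree condition $\deg_{G[G_B]}(v)\le\tfrac12|V_A|$ required by \Cref{claim:2} holds automatically (the degenerate case $|V_A|=0$ being trivial, since then $G'$ is edgeless with $\X=\I$ the unique optimum).

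In the strict case $|V_A|<|V_B|$, the hypotheses of \Cref{claim:2} are met verbatim, so $G'$ is exact and its $\mathrm{MAXCUT\ SDP}$ optimum is unique, and nothing more is needed. It remains to cover the boundary case $|V_A|=|V_B|$, which \Cref{claim:2} does not reach. Here I would invoke \Cref{prop1}: equality of the part sizes gives exactness of $G'$ immediately. For uniqueness, note that $G_B$ is the edgeless graph on $|V_B|$ vertices, so its complement $G_B^c$ is the complete graph $K_{|V_B|}$, which is connected; by \Cref{prop1} the optimal solution is unique.

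I do not anticipate a real obstacle: the corollary is essentially a case split combined with two invocations of results already established. The one point meriting attention is purely organizational — the equal-cardinality case lies outside the scope of \Cref{claim:2} and must be absorbed by \Cref{prop1}, where the only fact used is that the complement of an edgeless graph is complete and therefore connected. A self-contained argument is also possible — exhibiting the partition $(V_A,V_B)$ as a maximum cut together with a diagonal dual certificate $\Y$ and then re-running the rank bound from the proof of \Cref{prop1} — but deducing the statement from the two propositions is the cleanest route.
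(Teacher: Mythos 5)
Your derivation coincides with the paper's intended route: the corollary is presented as an immediate special case of \Cref{claim:2} (the strict case $|V_A|<|V_B|$, where independence of $V_B$ makes the degree bound automatic), with the boundary case $|V_A|=|V_B|$ absorbed by \Cref{prop1} via connectedness of $G_B^c = K_{|V_B|}$, exactly as you argue. The only inaccuracy is your parenthetical on $|V_A|=0$: there $G'$ is edgeless, so every feasible matrix attains objective value $0$ and $\I$ is not the unique optimum; this degenerate case simply lies outside the setup, since the bipartition of $G_K$ presupposes nonempty parts.
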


\ignore{
We now consider a case where internal structure is imposed on both $G_A$ and $G_B$. We show that if both $G_A$ and $G_B$ contain balanced bipartite spanning subgraph, then exactness persists, however the optimal solution is not unique. 

\begin{proposition}\label{claim:3}
    $G'$ is exact when both $G_A$ and $G_B$ contain balanced complete bipartite spanning subgraphs; that is, $G_A$ contains $K(|V_A|/2,\; |V_A|/2)$ and $G_B$ contains $K(|V_B|/2,\;|V_B|/2)$. Moreover, a higher-rank optimal $\mathrm{MAXCUT\ SDP}$ solution always exists for such $G'$. 
\end{proposition}

\begin{corollary}
    $(G_m \vee H_m) \vee (\mathcal{G}_n \vee \mathcal{H}_n)$ is exact for any graphs $G_m, H_m$ on $m$ vertices and $\mathcal{G}_n, \mathcal{H}_n$  on $n$ vertices, respectively. 
\end{corollary}
}

\begin{remark}
    The graph join of exact graphs (including bipartite graphs), however, is not necessarily exact. For example $K_4 \vee P_3$, $P_2 \vee P_3$, and $C_4 \vee P_3$ are not exact.
\end{remark}

We show that one can efficiently verify whether a given graph belongs to either of these classes. Moreover, whenever the corresponding conditions are satisfied, a maximum cut can be computed directly.

\begin{proposition}
The graph classes characterized in \Cref{prop1} and \Cref{claim:2}, as well as the uniqueness of their respective optimal solutions, can be recognized in polynomial time.
\end{proposition}

\begin{proof}
Let $n = |V|$ denote the total number of vertices in the input graph $G$. The recognition algorithm proceeds by constructing the complement graph $G^c$ and computing the sizes of its connected components.

First, we check for the balanced class characterized in \Cref{prop1} ($|V_A| = |V_B|$). This structural case is only possible if $n$ is even. If $n$ is even, we check whether some subcollection of the component sizes of $G^c$ sums exactly to $n/2$. Because the target sum $n/2$ is strictly bounded by $n$, this subset sum instance can be solved via a standard dynamic programming algorithm in strictly polynomial time \cite{MIT6006Lec18}. Furthermore, the uniqueness condition is satisfied if and only if this collection contains a single component of exactly size $n/2$ (meaning either $G_A^c$ or $G_B^c$ is connected), a property verifiable via a linear scan of the component sizes.

For recognizing the graph class characterized in \Cref{claim:2}, we note that we first need to verify that the number of vertices in the largest connected component of $G^c$ is strictly larger than $n/2$. The proof for this claim is as follows. For every $v \in V_B$, $\deg_{G[V_B]}(v) + \deg_{G^c[V_B]}(v) = |V_B| - 1$ which implies that $\deg_{G^c[V_B]}(v) \geq |V_B|- \frac{1}{2}|V_A|- 1$ from the bound $\deg_{G[V_B]} (v) \leq \tfrac{1}{2}|V_A|$. Now, since $|V_B| > |V_A|$, it follows that $ \deg_{G^c[V_B]}(v) > \frac{|V_B|}{2}-1$ for all $v \in V_B$. We claim that this implies that $G^c[V_B]$ is connected. To see this, assume $G^c[V_B]$ is disconnected. Then there exists a connected component $C \subset V_B$ such that $|C| \leq \frac{|V_B|}{2}$ and $\deg_{G^c[C]} (v) \leq |C| -1$ for all $v \in C$. This implies $\deg_{G^c[C]}(v) \leq \frac{|V_B|}{2}-1$ for every $v \in C$, which is a contradiction since $C \subset V_B$. 

Since $|V_B| > n/2$, it follows that $V_B$ is a connected component with size strictly larger than $n/2$ in $G^c$. If no connected component in $G^c$ exceeds this size threshold, the graph is rejected. Otherwise, the unique largest component is identified as $V_B$, and the remaining vertices form $V_A$. Finally, the algorithm verifies the vertex-degree condition $\deg_{G[V_B]}(v) \le \frac{1}{2}|V_A|$ for all $v \in V_B$. Each step can be executed in polynomial time.
\end{proof}

\subsection{Rank of Optimal Solutions in Exact Graphs}\label{secMW}

Thus far, we provided conditions for the existence or absence of higher-rank optimal matrices for certain exact graphs. Nevertheless, the question of when and how these higher-rank optimal solutions occur remains only partially understood. Mirka and Williamson \citep{mirka2024max} highlighted this gap by posing two open problems. We recall the following conjecture from their work: \vspace{1mm}

\textit{If a graph admits a unique partition corresponding to its maximum cut and the GW-relaxation on the graph is exact, then the rank-1 optimal solution must also be the unique optimal solution.}\\

We disprove their conjecture by constructing the following counterexample, on a weighted complete graph: 
\begin{example} \label{counter1}
Consider \(K_4\) weighted by \(\bm=(5,3,4,4)^{\top}\) on vertices \(\{1,2,3,4\}\). The GW-relaxation for this graph is exact (see \Cref{knowngraphs}), since $m_1 + m_2 = m_3 + m_4$. Enumeration shows that the cut \((\{1,2\},\{3,4\})\) is the unique maximum cut, with an optimal value of $64$. The optimal $\mathrm{MAXCUT\ SDP}$ solution, however, is not unique. Consider the following matrix ${\bZ}^*$: 
\begin{align*}
\displaystyle
\bZ^* = \begin{bmatrix}
1 & -\frac{1}{3} & -\frac{1}{2} & -\frac{1}{2}  \\[2mm]
-\frac{1}{3} & 1 & -\frac{1}{6} & -\frac{1}{6}  \\[2mm]
-\frac{1}{2} & -\frac{1}{6} & 1 & -\frac{1}{4}  \\[2mm]
-\frac{1}{2} & -\frac{1}{6} & -\frac{1}{4} & 1 \\ 
\end{bmatrix}
\end{align*}
$\bZ^*$ is a higher-rank feasible primal solution which achieves the same objective value of $64$, and hence is optimal.
\end{example}

Next, we resolve the second question posed in the same work \citep{mirka2024max}:\vspace{1mm}

\textit{Let \( G = (V, E) \) be a graph with multiple maximum cuts. Suppose the corresponding rank-1 reference solutions are optimal primal solutions of the $\mathrm{MAXCUT\ SDP}$. Are all higher-rank optimal solutions necessarily convex combinations of these reference solutions?}\vspace{1mm}

The following example answers the aforementioned question negatively.
\begin{example} \label{counter2}
Consider the complete graph \(K_4\) with vertex set \(\{1,2,3,4\}\), weighted by \(\mathbf{m}=(2,2,4,4)^{\top}\). This graph is exact since $m_1 + m_3 = m_2 + m_4$. The cuts \((\{1,3\},\{2,4\})\) and \((\{1,4\},\{2,3\})\) are the maximum cuts; exhaustive enumeration shows no other partition attains weight \(36\). Let \(\X^*\) and \(\Y^*\) be the corresponding rank-1 optima for the two maximum cuts. Moreover, the following matrix \(\mathbf{\hat{Z}}\) is also feasible and attains the same value~36:
\[
\hat{\bZ} = \begin{bmatrix}
1 & -\frac{1}{5} & -\frac{1}{5} & -\frac{1}{5}  \\[2mm]
-\frac{1}{5} & 1 & -\frac{1}{5} & -\frac{1}{5}  \\[2mm]
-\frac{1}{5} & -\frac{1}{5} & 1 & -\frac{4}{5}  \\[2mm]
-\frac{1}{5} & -\frac{1}{5} & -\frac{4}{5} & 1 \\
\end{bmatrix}
\]
Although \(\hat{\bZ}\) is optimal, it cannot be expressed as a convex combination of \(\X^*\) and \(\Y^*\). This follows from observing that both \( \X^*_{12} = \Y^*_{12} = -1 \), whereas \( \hat{\bZ}_{12} = -\frac{1}{5} \). 
\end{example}

\Cref{counter1} and \Cref{counter2} motivate a closer examination of the structure of higher-rank solutions, particularly in the case of complete graphs. To this end, we introduce the following notion: a vector \(\mathbf{m} \in\mathbb{R}^n_{>0}\) (\(n\ge 3\)) is called \emph{non-dominating} if for every index \(j \in [n]\), $m_j<\sum_{k\ne j} m_k$. The next theorem establishes that, for complete graphs weighted by non-dominating $\mathbf{m}$, the optimal solution to the $\mathrm{MAXCUT \ SDP}$ always has the maximum possible rank $n-1$, independent of the exactness of the relaxation. 
\begin{theorem}\label{thm:3}
Let $G$ be the graph on $n$ vertices weighted by a non-dominating $\bm$. 
Then the $\mathrm{MAXCUT\ SDP}$ admits an optimal solution $\X_G$ with 
\[
\operatorname{rank}(\X_G)=n-1
\quad\text{and}\quad
\dfrac14 \left \langle \L_G, \X_G \right \rangle  =\dfrac14 \left(\sum_{i=1}^n m_i\right)^2.
\]
\end{theorem}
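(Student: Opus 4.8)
The plan is to strip the problem down to a purely geometric configuration question about unit vectors and then settle that question by induction on $n$.

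\emph{Reduction of the objective.} Write $\sigma:=\sum_{i=1}^n m_i$. A direct computation of the Laplacian of $K_n$ weighted by $\bm$ gives $\L_G=\sigma\,\Diag(\bm)-\bm\bm^{\top}$. Hence for every feasible $\X$ (i.e.\ $X_{ii}=1$, $\X\succeq\bz$),
\[
\tfrac14\langle\L_G,\X\rangle=\tfrac14\Bigl(\sigma\textstyle\sum_i m_i X_{ii}-\bm^{\top}\X\bm\Bigr)=\tfrac14\bigl(\sigma^2-\bm^{\top}\X\bm\bigr)\le\tfrac14\sigma^2,
\]
with equality exactly when $\bm^{\top}\X\bm=0$, which for $\X\succeq\bz$ is equivalent to $\X\bm=\bz$. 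So $\tfrac14\sigma^2$ is an upper bound on the SDP value, and the theorem reduces to exhibiting a correlation matrix $\X$ with $\X\bm=\bz$ and $\rank(\X)=n-1$. Since $\bm\ne\bz$, any such $\X$ automatically has $\rank(\X)\le n-1$; the real content is that $\mathcal N(\X)$ can be made to equal $\range(\bm)$ exactly. Writing $\X=U^{\top}U$ with $U\in\R^{(n-1)\times n}$ having columns $\bu_1,\dots,\bu_n$, this is equivalent to producing unit vectors $\bu_1,\dots,\bu_n\in\R^{n-1}$ that span $\R^{n-1}$ and satisfy $\sum_{i} m_i\bu_i=\bz$.

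\emph{Construction of the vectors by induction on $n\ge3$.} For $n=3$, the non-dominating hypothesis is precisely the strict triangle inequality for $m_1,m_2,m_3$, so there is a non-degenerate triangle with these side lengths; its three directed boundary edges, normalized, are unit vectors in $\R^2$ spanning $\R^2$ with $\bm$-weighted sum $\bz$. For the inductive step ($n\ge4$), assume $m_1\le\cdots\le m_n$ and ``merge'' the two largest weights: choose a length $\ell$ strictly inside the open interval
\[
\Bigl(\max\bigl\{\,m_n-m_{n-1},\ m_{n-2}-\textstyle\sum_{j\le n-3}m_j\,\bigr\},\ \min\bigl\{\,m_{n-1}+m_n,\ \textstyle\sum_{j\le n-2}m_j\,\bigr\}\Bigr),
\]
which is non-empty: the four pairwise comparisons needed each reduce either to a non-dominating inequality for $\bm$ (for the indices $n$ and $n-2$) or to positivity of $m_{n-1}$ or of $\sum_{j\le n-3}m_j$. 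This choice guarantees simultaneously that the $(n-1)$-tuple $(m_1,\dots,m_{n-2},\ell)$ is non-dominating, so the induction hypothesis applies in $\R^{n-2}$, and that $m_{n-1},\ell,m_n$ satisfy the \emph{strict} triangle inequality. The hypothesis yields unit vectors $\bu_1,\dots,\bu_{n-2},\bw\in\R^{n-2}$ spanning $\R^{n-2}$ with $\sum_{i\le n-2}m_i\bu_i+\ell\bw=\bz$. Embed $\R^{n-2}\hookrightarrow\R^{n-1}$ as the first $n-2$ coordinates and ``split'' $\ell\bw=m_{n-1}\bu_{n-1}+m_n\bu_n$ using a non-degenerate triangle placed in the plane $\operatorname{span}(\bw,e_{n-1})$; this produces unit $\bu_{n-1},\bu_n\in\R^{n-1}$ with nonzero $e_{n-1}$-component, so that $\bu_1,\dots,\bu_n$ span $\R^{n-1}$, while $\sum_{i\le n}m_i\bu_i=\bigl(\sum_{i\le n-2}m_i\bu_i\bigr)+\ell\bw=\bz$.

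\emph{Assembly.} Form $\X=U^{\top}U$ from the columns $\bu_1,\dots,\bu_n$. Then $X_{ii}=\|\bu_i\|^2=1$, $\X\bm=U^{\top}(U\bm)=\bz$, and $\rank(\X)=\rank(U)=n-1$ since the $\bu_i$ span $\R^{n-1}$; by the first step $\tfrac14\langle\L_G,\X\rangle=\tfrac14\sigma^2$, which is optimal. The main obstacle I anticipate is the inductive step's bookkeeping: verifying that the interval for $\ell$ is non-empty while it must simultaneously keep the reduced $(n-1)$-tuple non-dominating and keep $m_{n-1},\ell,m_n$ a non-degenerate triangle, and making sure the two ``un-merged'' vectors genuinely acquire the coordinate $e_{n-1}$ so that the full system spans $\R^{n-1}$ rather than only the embedded $\R^{n-2}$. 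The remaining steps are routine linear algebra.
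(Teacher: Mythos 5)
Your proposal is correct, and its key step is genuinely different from the paper's. Both arguments share the same skeleton: using $\L_G = M\,\Diag(\bm) - \bm\bm^{\top}$ (with $M=\sum_i m_i$) one sees that $\tfrac14 M^2$ bounds the SDP value and is attained exactly by feasible $\X$ with $\X\bm=\bz$, so everything reduces to exhibiting a correlation matrix whose nullspace is exactly $\operatorname{span}\{\bm\}$ (your direct bound $\tfrac14(\,M^2-\bm^{\top}\X\bm\,)$ is the same computation the paper packages as the dual certificate $\Y=\tfrac M4\Diag(\bm)$, $\bS=\tfrac14\bm\bm^{\top}$). Where you diverge is the construction of that rank-$(n-1)$ solution: the paper writes down explicit vectors $\bu^i\in\bm^{\perp}$ (all entries $1$ except $u^i_i=-(M-m_i)/m_i$) and explicit positive coefficients $d_i'$ with $\sum_i d_i'(\bu^i)^{\circ 2}=\1_n$, giving a closed-form $\X_G=\sum_i d_i'\,\bu^i(\bu^i)^{\top}$; you instead reformulate feasibility as a Gram-vector problem (unit vectors $\bu_1,\dots,\bu_n\in\R^{n-1}$ spanning $\R^{n-1}$ with $\sum_i m_i\bu_i=\bz$) and solve it by induction, merging the two largest weights into an auxiliary length $\ell$ and splitting it back via a non-degenerate triangle; your verification that the admissible interval for $\ell$ is nonempty, that the reduced tuple stays non-dominating, and that the split vectors pick up the new coordinate all check out. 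The trade-off: the paper's algebraic construction yields an explicit rational solution, which it reuses later to verify the counterexamples (\Cref{verification}), whereas your geometric induction is arguably more transparent about \emph{why} non-domination (a generalized polygon inequality) is exactly what makes a full-rank-minus-one configuration possible, at the cost of being non-explicit.
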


\begin{corollary}\label{convex_hull}
Let $G$ be a graph weighted by a non-dominating $\bm$ admitting at most $n-2$ balanced partitions, i.e., there exist at most \( n-2 \) distinct partitions \( (S_i, S_i^c) \) of \( V \) such that
\( \sum_{j \in S_i} m_j = \sum_{j \notin S_i} m_j \).
Then the $\mathrm{MAXCUT\ SDP}$ admits an optimal solution $\X_G$ that lies outside the convex hull of rank-1 optimal solutions.
\end{corollary}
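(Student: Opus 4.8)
The plan is to leverage Theorem~\ref{thm:3} directly: it already guarantees an optimal solution $\X_G$ of rank $n-1$, so the only remaining task is to show that, under the hypothesis of at most $n-2$ balanced partitions, this $\X_G$ cannot lie in the convex hull of the rank-$1$ optima. First I would identify the rank-$1$ optimal solutions explicitly. For a complete graph weighted by a non-dominating $\bm$, the quadratic integer optimum equals $\frac14\big(\sum_i m_i\big)^2$ \emph{precisely} on the balanced partitions (a cut $(S,S^c)$ has value $\frac14\big[(\sum_i m_i)^2 - (\sum_{j\in S} m_j - \sum_{j\notin S} m_j)^2\big]$, which is maximized exactly when the signed imbalance is zero), so the rank-$1$ optimal SDP solutions are exactly the matrices $\X_i = \x_i\x_i^\top$ with $\x_i = \ind_{S_i} - \ind_{S_i^c}$ ranging over the (at most $n-2$) balanced partitions. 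Note that when $\bm$ is non-dominating the relaxation need not even be exact, in which case there are no rank-$1$ optima at all and the claim is immediate; so I would first dispose of that case and then assume there is at least one balanced partition.

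Next I would argue a dimension/affine-independence obstruction. Suppose for contradiction that $\X_G = \sum_{i} \lambda_i \X_i$ with $\lambda_i \ge 0$, $\sum_i \lambda_i = 1$, over the $r \le n-2$ balanced partitions. The key observation is that $\operatorname{rank}$ is not merely subadditive here but that the rank of a convex combination of rank-$1$ PSD matrices $\x_i\x_i^\top$ equals the dimension of $\operatorname{span}\{\x_i : \lambda_i > 0\}$, which is at most the number of distinct $\x_i$ with positive weight, hence at most $r \le n-2$. This contradicts $\operatorname{rank}(\X_G) = n-1$ from Theorem~\ref{thm:3}. So the entire argument reduces to this one rank inequality, which I would state as a short lemma: if $\mathbf{M} = \sum_{i=1}^{r} \lambda_i \bv_i \bv_i^\top$ with $\lambda_i > 0$, then $\operatorname{rank}(\mathbf{M}) = \dim \operatorname{span}\{\bv_1,\dots,\bv_r\} \le r$. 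This is standard: writing $\mathbf{M} = \B \B^\top$ with $\B = [\sqrt{\lambda_1}\bv_1 \mid \cdots \mid \sqrt{\lambda_r}\bv_r]$, we have $\operatorname{rank}(\mathbf{M}) = \operatorname{rank}(\B) \le r$.

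The main obstacle is a subtlety of logic rather than of computation: I must make sure that the rank-$n-1$ optimal solution produced by Theorem~\ref{thm:3} is genuinely \emph{not} itself one of the finitely many rank-$1$ references (trivial, since $n-1 \ge 2$ for $n \ge 3$) and, more importantly, that the hypothesis ``at most $n-2$ balanced partitions'' is exactly what is needed to block the convex-hull representation. If there were $n-1$ or more balanced partitions, the span of the corresponding sign vectors could have dimension $n-1$ and the obstruction would fail; the bound $n-2$ is therefore tight for this argument. I would close by remarking that combined with Theorem~\ref{thm:3} this gives a clean structural statement: for such graphs the convex hull of rank-$1$ optima is a proper face-like subset of the optimal set and never captures the full-rank optimum, which is precisely the phenomenon flagged in Example~\ref{counter2} (there $n = 4$ and there are $2 = n-2$ balanced partitions, and indeed $\hat{\bZ}$, though optimal and of rank $2 < n-1 = 3$, already escapes the hull — so the corollary's conclusion about the \emph{existence} of an escaping optimum is witnessed even more strongly there).
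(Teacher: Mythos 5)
Your proposal is correct and follows essentially the same route as the paper: invoke \Cref{thm:3} for an optimal solution of rank $n-1$, observe that the rank-$1$ optima correspond exactly to the (at most $n-2$) balanced partitions, and note that any convex combination of at most $n-2$ rank-$1$ matrices has rank at most $n-2$, a contradiction. The extra details you supply (the cut-value identity identifying balanced partitions with rank-$1$ optima, and the factorization argument for the rank bound) are just explicit versions of what the paper leaves implicit.
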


\ignore{\begin{proofsketch}
For each \(i \in [n]\), construct \(\bu^i\), by setting \(u^i_j = 1\) for \(j \neq i\) and \(u^i_i = -\tfrac{M - m_i}{m_i}\), where \(M = \sum_i m_i\). Observe that each \(\bu^i\) is orthogonal to $\bm$. We show that, there exist \(\{d'_i >0\}, i \in [n]\) such that a weighted sum of the entrywise squares of $\bu^i, i \in [n]$ equals the all-ones vector. Defining \(\X_G = \sum_i d'_i\, \bu^i (\bu^i)^\top\) then yields a feasible point of the $\mathrm{MAXCUT\ SDP}$ whose nullspace is exactly \(\bm\), hence \(\operatorname{rank}(\X_G) = n - 1\). 
The dual variable \(\Y = \tfrac{M}{4}\mathrm{Diag}(\bm)\) produces \(\bS = \tfrac14 \bm \bm^\top\) satisfying complementary slackness since \(\X_G\bm = \bz\).
\end{proofsketch}}

\Cref{thm:3} clarifies the phenomena illustrated in \Cref{counter1} and \Cref{counter2}. We verify that \(\hat{\bZ}\) and \(\bZ^*\) lie in the convex combinations of the optimal rank-1 matrices and their corresponding constructed matrices \(\X_G\) (see \Cref{verification}). These examples reveal that the optimal solutions in the exact relaxation can lie outside the convex hull of the optimal rank-1 solutions. A future direction for research is to fully characterize the conditions under which the optimal $\mathrm{MAXCUT\ SDP}$ solutions in exact relaxations lie outside the convex hull of the rank-1 optima. Motivated by these observations, we next present the following lemmas, which imply the existence of unweighted analogues of \Cref{counter1} and \Cref{counter2} via complete \(n\)-partite graphs, and provides complete characterization of condition for exactness and uniqueness of the $\mathrm{MAXCUT\ SDP}$ solution for complete \(n\)-partite graphs.

\begin{theorem}\label{lem6}
Let \( \widetilde{G} \) be any vertex-split graph of \( G \). Then, \( G \) is exact if and only if \( \widetilde{G} \) is exact.
\end{theorem}

\begin{lemma}\label{splitlemma}
Let $G$ be any graph and suppose there exists a rank-$r$ optimal solution to the $\mathrm{MAXCUT\ SDP}$ for $G$. Then for any vertex-split $\widetilde{G}$ of $G$, there exists a rank-$r$ optimal solution to the $\mathrm{MAXCUT\ SDP}$ for $\widetilde{G}$.
\end{lemma}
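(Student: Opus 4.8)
The plan is to give an explicit construction of a rank-$r$ optimal solution for $\widetilde{G}$ from one for $G$, exploiting the ``split invariance'' $\phi(\widetilde{G})=\phi(G)$ already recorded in the preliminaries. Let $G$ have $n$ vertices and let $\widetilde{G}$ be the split graph obtained by replacing each vertex $i$ with $p_i$ copies, where an edge $ij\in E(G)$ of weight $w_{ij}$ becomes a complete bipartite bundle of edges each of weight $w_{ij}/(p_ip_j)$. Write $N=\sum_i p_i$. Let $\X\in\R^{n\times n}$ be a rank-$r$ optimal solution of $\mathrm{MAXCUT\ SDP}$ for $G$, so $X_{ii}=1$, $\X\succeq 0$, and $\tfrac14\langle\L_G,\X\rangle=\phi(G)$. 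The natural candidate is the ``blow-up'' matrix $\widetilde{\X}\in\R^{N\times N}$ that is block-constant: the block indexed by (copies of $i$)$\times$(copies of $j$) is the all-ones matrix scaled by $X_{ij}$. Formally, if $B\in\{0,1\}^{N\times n}$ is the matrix whose $(a,i)$ entry is $1$ iff copy $a$ belongs to the class of vertex $i$, then $\widetilde{\X}=B\X B^{\top}$.

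First I would check feasibility. Since each copy $a$ of vertex $i$ has $\widetilde{X}_{aa}=X_{ii}=1$, the diagonal constraint holds; and $\widetilde{\X}=B\X B^{\top}\succeq 0$ because $\X\succeq 0$. Next I would compute the rank: $B$ has full column rank $n$ (its columns have disjoint supports, each nonempty), so $\rank(B\X B^{\top})=\rank(\X)=r$. The remaining point is the objective value. Here I would use the standard identity relating the Laplacian of a blow-up to the original: for any $\x\in\R^n$ and its lift $\widetilde{\x}=B\x$, one has $\widetilde{\x}^{\top}\L_{\widetilde{G}}\widetilde{\x}=\x^{\top}\L_G\x$, because an edge $ij$ of $G$ contributes $w_{ij}(x_i-x_j)^2$ while the corresponding $p_ip_j$ edges of $\widetilde{G}$ each contribute $\tfrac{w_{ij}}{p_ip_j}(x_i-x_j)^2$, summing to the same quantity (copies of the same vertex get identical lifted values, so intra-class edges contribute nothing, consistent with $\widetilde{G}$ having no such edges). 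Applying this coordinatewise to a Gram factorization $\X=\sum_k \x_k\x_k^{\top}$ gives $\langle\L_{\widetilde{G}},\widetilde{\X}\rangle=\sum_k\widetilde{\x}_k^{\top}\L_{\widetilde{G}}\widetilde{\x}_k=\sum_k\x_k^{\top}\L_G\x_k=\langle\L_G,\X\rangle$. Hence $\tfrac14\langle\L_{\widetilde{G}},\widetilde{\X}\rangle=\phi(G)=\phi(\widetilde{G})$ by split invariance, so $\widetilde{\X}$ is optimal for $\widetilde{G}$ of rank exactly $r$.

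The only genuinely delicate point, and the one I would state carefully, is the Laplacian identity $B^{\top}\L_{\widetilde{G}}B=\L_G$ (equivalently $\langle\L_{\widetilde{G}},B\X B^{\top}\rangle=\langle\L_G,\X\rangle$): one must verify that the edge-weight rescaling $w_{ij}\mapsto w_{ij}/(p_ip_j)$ in the definition of the split graph exactly cancels the multiplicity $p_ip_j$ of edges created between the two classes, and that the Laplacian diagonal entries match up accordingly. This is a short but essential bookkeeping check; everything else (feasibility, PSD-ness, rank, and the appeal to split invariance for optimality) is immediate. I would also remark that the converse direction — obtaining a rank-$r$ solution for $G$ from one for $\widetilde{G}$ — is exactly the content of the argument behind \Cref{lem6} when $r=1$, but is not needed here since the present lemma only asserts the forward ``lifting'' direction.
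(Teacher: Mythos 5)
Your proposal is correct and follows essentially the same route as the paper: both construct the block-constant ``blow-up'' solution (your $B\X B^{\top}$ is exactly the paper's cloning of Gram vectors across copies), verify feasibility, rank preservation, and the cancellation of the $w_{ij}/(p_ip_j)$ reweighting against the $p_ip_j$ edge multiplicity, and then invoke the split invariance $\phi(\widetilde{G})=\phi(G)$ to certify optimality. The bookkeeping you flag as the delicate point is precisely the computation the paper carries out explicitly.
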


\begin{proposition}\label{cor:complete}
$K(a_1,\dots, a_n)$ is exact if and only if one of the following holds:
\begin{enumerate}
    \item (Balanced non-dominating case): There exists a subset $S \subset [n]$ such that $\sum_{i \in S} a_i = \sum_{j \notin S} a_j$. The optimal objective value is $\left(\sum_{i \in S} a_i\right)^2$.
    \item (Dominating case): $a_n \geq \sum_{i \neq n} a_i$. The optimal objective value is $a_n \cdot \sum_{i \neq n} a_i$.
\end{enumerate}
Moreover, the optimal exact $\mathrm{MAX}$-$\mathrm{CUT}$ $\mathrm{SDP}$ solution is unique if and only if $a_n \geq \sum_{i < n} a_i$.
\end{proposition}
\begin{remark}
\Cref{cor:complete} also gives a simple way to construct unweighted exact
instances for which the maximum cut is unique, while the optimal
\(\mathrm{MAXCUT}\) SDP solution is not unique. For example, \(K(1,2,3,4)\) is an exact graph with unique maximum cut partition but has multiple $\mathrm{MAXCUT\ SDP}$ optimal solutions.
\end{remark}

We next examine the behavior of exact graphs under another graph operation, the \emph{lexicographic product}, which features in spectral graph theory \citep{barik2015laplacian} as well as applications such as electric power monitoring \citep{dorbec2008power}. We show that if the first factor $G_1$ of the lexicographic product is exact, then any rank-$r$ optimal solution for $G_1$ can be lifted to a rank-$r$ optimal solution for $G_1 \bullet G_2$; so rank-1 optimal solutions persist.

\begin{theorem}\label{thm:1}
Let $G_1=(V_1,E_1)$ be an exact graph (without isolated vertices) and let $G_2=(V_2,E_2)$. Then the lexicographic product $G_1\!\bullet\! G_2$ is also exact, and its cut value satisfies $\phi(G_1\!\bullet\! G_2) \;=\; |V_2|^2 \,\mc(G_1)$. Moreover, for every optimal rank-$r$ solution of the $\mathrm{MAXCUT\ SDP}$ for an exact graph $G_1$, there exists an optimal rank-$r$ solution for the $\mathrm{MAXCUT\ SDP}$ of $G_1\!\bullet\! G_2$.
\end{theorem}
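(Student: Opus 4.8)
The plan is to exhibit, for the lexicographic product $G_1\bullet G_2$, an explicit optimal dual–primal pair built from the corresponding objects for $G_1$, and then to lift the rank-$r$ primal certificate. Write $n_2 = |V_2|$. The key structural observation is that the Laplacian $\L_{G_1\bullet G_2}$ has a clean form: after ordering $V_1\times V_2$ so that blocks are indexed by $V_1$, the block $(u,u')$ for $u\ne u'$ equals $-w_{uu'}\mathbf{J}_{n_2}$ when $uu'\in E_1$ (all cross-edges present) and $\mathbf{0}$ otherwise, where $\mathbf{J}_{n_2}$ is the $n_2\times n_2$ all-ones matrix; the diagonal block at $u$ is $\big(n_2\deg_{G_1}(u)\big)\I_{n_2} + \L_{G_2}$. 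Equivalently, $\L_{G_1\bullet G_2} = \L_{G_1}\otimes \mathbf{J}_{n_2} + \I_{|V_1|}\otimes \L_{G_2} + D$ for an appropriate diagonal correction $D$ reconciling the $\mathbf{J}$ versus $\I$ scaling on the diagonal; it will be cleanest to just verify $\L_{G_1\bullet G_2} = \L_{G_1}\otimes \mathbf{J}_{n_2} + \I_{|V_1|}\otimes \L_{G_2}$ directly from the definition of the lexicographic product (the degree of $(u,v)$ is $n_2\deg_{G_1}(u) + \deg_{G_2}(v)$, which matches the diagonal of the right-hand side).

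Next I would handle the lower bound $\mc(G_1\bullet G_2)\ge n_2^2\,\mc(G_1)$ by pulling back a maximum cut $(S,S^c)$ of $G_1$: the partition $(S\times V_2,\; S^c\times V_2)$ cuts exactly $n_2^2$ edges for each edge of $G_1$ crossing $(S,S^c)$ and none of the copies of $E_2$, giving a cut of value $n_2^2\,\mc(G_1)$. For the matching upper bound I would produce a feasible dual solution of value $n_2^2\,\mc(G_1)$. Since $G_1$ is exact, let $(\X_1,\Y_1,\bS_1)$ be an optimal primal–dual triple for $G_1$ with $\rank(\X_1)=r$ and $\tfrac14\langle\L_{G_1},\X_1\rangle = \tr(\Y_1) = \mc(G_1)$; by the structure above, the SDP objective scales by $n_2^2$ on the cross-edge part while the $\I\otimes\L_{G_2}$ part contributes zero to the objective for any block-constant primal lift. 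Concretely, set $\X \;=\; \X_1\otimes \mathbf{J}_{n_2}$. This is PSD (Kronecker product of PSD matrices), has all diagonal entries $1$, satisfies $\rank(\X) = \rank(\X_1)\cdot\rank(\mathbf{J}_{n_2}) = r$, and $\tfrac14\langle \L_{G_1\bullet G_2},\X\rangle = \tfrac14\langle \L_{G_1}\otimes\mathbf{J}_{n_2} + \I\otimes\L_{G_2},\, \X_1\otimes\mathbf{J}_{n_2}\rangle = \tfrac{n_2^2}{4}\langle\L_{G_1},\X_1\rangle + \tfrac14\langle\I\otimes\L_{G_2},\X_1\otimes\mathbf{J}_{n_2}\rangle$; the second term vanishes because $\langle \L_{G_2},\mathbf{J}_{n_2}\rangle = 0$ (row sums of a Laplacian are zero). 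So $\X$ attains objective $n_2^2\,\mc(G_1)$.

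To close the argument I would certify optimality of $\X$ with a dual point. The natural guess is $\Y = n_2\,\Y_1 \otimes \I_{n_2}$ (so $\tr(\Y) = n_2^2\,\tr(\Y_1) = n_2^2\,\mc(G_1)$, matching), with slack $\bS = \Y - \tfrac14\L_{G_1\bullet G_2} = n_2\,\bS_1\otimes\I_{n_2} + \big(n_2\,(\tfrac14\L_{G_1})\otimes\I_{n_2} - \tfrac14\L_{G_1}\otimes\mathbf{J}_{n_2}\big) - \tfrac14\I\otimes\L_{G_2}$; the middle term is $\tfrac14\L_{G_1}\otimes(n_2\I_{n_2} - \mathbf{J}_{n_2})$, which is PSD since $\L_{G_1}\succeq 0$ and $n_2\I_{n_2}-\mathbf{J}_{n_2}\succeq 0$, but the last term $-\tfrac14\I\otimes\L_{G_2}$ is $\preceq 0$, so this naive $\Y$ need not give $\bS\succeq 0$ — this is the main obstacle. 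The fix is to absorb $\L_{G_2}$: replace $\Y$ by $\Y = n_2\,\Y_1\otimes\I_{n_2} + \I_{|V_1|}\otimes\tfrac14\L_{G_2}$. Its trace is $n_2^2\mc(G_1) + \tfrac14|V_1|\tr(\L_{G_2})$, which overshoots; so instead one should define $\Y$ via its diagonal only — recall only $\tr(\Y)$ matters and $\Y$ may be any symmetric matrix with $\Y-\tfrac14\L_{G_1\bullet G_2}\succeq 0$. The clean choice is $\Y = n_2\Y_1\otimes\I_{n_2} + \tfrac14\,\I_{|V_1|}\otimes(\L_{G_2} \text{ with its diagonal only, i.e. }\Diag(\deg_{G_2}))$ — but this still overshoots unless $G_2$ is edgeless. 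The correct resolution, which I would carry out carefully, is to note that the $\mathbf{J}_{n_2}$-structure of $\X$ means $\bS$ only needs to be PSD, not to satisfy $\bS\X=0$ with any particular margin; and that $\Y_1\otimes\I_{n_2}$ can be perturbed within the affine dual slice. Specifically, since $\bS_1\succeq 0$ and (by exactness / complementary slackness) $\bS_1\X_1 = 0$, one shows $n_2(\bS_1\otimes\I_{n_2}) + \tfrac14\L_{G_1}\otimes(n_2\I-\mathbf{J}) - \tfrac14\I\otimes\L_{G_2}\succeq 0$ holds after replacing the first summand's coefficient: take $\Y = n_2\Y_1\otimes\I_{n_2} - \tfrac14\I\otimes\L_{G_2}$, whose trace is exactly $n_2^2\mc(G_1)$ since $\tr(\L_{G_2})$ is subtracted—wait, $\tr(-\tfrac14\I\otimes\L_{G_2}) = -\tfrac14|V_1|\tr(\L_{G_2})\ne 0$. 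So one instead adds a diagonal correction $\tfrac14\I_{|V_1|}\otimes\Diag(\L_{G_2})$ back: $\Y = n_2\Y_1\otimes\I_{n_2} + \tfrac14\I\otimes(\Diag(\deg_{G_2}) - \L_{G_2}) = n_2\Y_1\otimes\I_{n_2} + \tfrac14\I\otimes A_{G_2}$ where $A_{G_2}$ is the adjacency matrix; then $\tr(\Y) = n_2^2\mc(G_1) + 0$ since $A_{G_2}$ has zero diagonal, and $\bS = n_2\bS_1\otimes\I_{n_2} + \tfrac14\L_{G_1}\otimes(n_2\I-\mathbf{J}) + \tfrac14\I\otimes A_{G_2} - \tfrac14\I\otimes\L_{G_2}$; but $A_{G_2}-\L_{G_2} = A_{G_2} - (\Diag(\deg_{G_2})-A_{G_2}) = 2A_{G_2}-\Diag(\deg_{G_2})$, not obviously PSD either. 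I expect the genuinely correct dual, which I would pin down by solving the $2\times 2$ block structure on a single pair $\{u\}\times V_2$ versus the rest, to be $\Y = n_2\Y_1\otimes\I_{n_2}$ together with the observation that $\bS = n_2\bS_1\otimes\I_{n_2} + \tfrac14\L_{G_1}\otimes(n_2\I_{n_2}-\mathbf{J}_{n_2}) - \tfrac14\I_{|V_1|}\otimes\L_{G_2}$ fails only on the component where $\L_{G_1}$ is small, so in fact the theorem as stated must be using $\phi$ (the Delorme–Poljak value) rather than requiring $\bS\succeq 0$ with this exact $\Y$; invoking graph-split invariance and the known behavior of $\phi$ under lexicographic products, together with the primal lift $\X = \X_1\otimes\mathbf{J}_{n_2}$ giving a rank-$r$ feasible point of value $n_2^2\mc(G_1)$ and the combinatorial lower bound, will sandwich $\phi(G_1\bullet G_2) = n_2^2\mc(G_1)$ and establish both exactness and the rank statement without needing an explicit $\bS$. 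Thus the cleanest route I would actually write up is: (i) prove the combinatorial lower bound; (ii) prove $\phi(G_1\bullet G_2)\le\phi(G_1\bullet\overline{K}_{n_2}) = \phi(\widetilde{G_1}) = \phi(G_1) = \mc(G_1)\cdot n_2^2$ via monotonicity and split invariance, where $\widetilde{G_1}$ is the $n_2$-uniform split of $G_1$ (here I would double-check the scaling, since the split in this paper rescales edge weights by $1/(p_ip_j)$, so $G_1\bullet\overline{K}_{n_2}$ is the split with all weights multiplied back by $n_2^2$, hence $\phi = n_2^2\phi(\widetilde{G_1}) = n_2^2\phi(G_1)$); and (iii) exhibit $\X = \X_1\otimes\mathbf{J}_{n_2}$ as a rank-$r$ optimal primal solution, reading off exactness when $r=1$ and the rank-preservation statement in general. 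The main obstacle, as flagged, is item (ii): getting the edge-weight bookkeeping in the split/monotonicity comparison exactly right so that the upper bound matches $n_2^2\mc(G_1)$ on the nose.
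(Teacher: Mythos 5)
Your primal half is fine and matches the paper (the lift $\X_1\otimes\1_{n_2\times n_2}$, the cut-counting lower bound, and rank preservation are exactly the paper's), but the proposal breaks down at the two places where the actual work lies. First, the Laplacian identity you propose to ``just verify,'' $\L_{G_1\bullet G_2}=\L_{G_1}\otimes\1_{n_2\times n_2}+\I_{|V_1|}\otimes\L_{G_2}$, is false: as you yourself note at the start, the diagonal block at $u$ is $\bigl(n_2\deg_{G_1}(u)\bigr)\I_{n_2}+\L_{G_2}$, whereas the right-hand side has diagonal block $\deg_{G_1}(u)\,\1_{n_2\times n_2}+\L_{G_2}$; the discrepancy is $\mathbf{D}_{G_1}\otimes(n_2\I_{n_2}-\1_{n_2\times n_2})$, which is not a diagonal correction (compare \Cref{lem:1}). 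It happens to be orthogonal to $\X_1\otimes\1_{n_2\times n_2}$, so your objective-value computation survives, but any dual/PSD reasoning built on the wrong identity does not.

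Second, and decisively, you never establish the upper bound $\phi(G_1\bullet G_2)\le n_2^2\,\mc(G_1)$. Your explicit dual attempts are abandoned (you concede each candidate $\Y$ fails), and the fallback route (ii) uses monotonicity in the wrong direction: $G_1\bullet\overline{K}_{n_2}$ is a spanning \emph{subgraph} of $G_1\bullet G_2$ (the product has the extra intra-block copies of $E_2$), so monotonicity gives $\phi(G_1\bullet G_2)\ \ge\ \phi(G_1\bullet\overline{K}_{n_2})=n_2^2\phi(G_1)$ --- a lower bound you already have, not the upper bound you need. To use monotonicity for an upper bound you would have to compare with the supergraph $G_1\bullet K_{n_2}$, but $\phi(G_1\bullet K_{n_2})=n_2^2\,\mc(G_1)$ is precisely a special case of the theorem being proven and does not follow from split invariance, since a split graph has no edges inside the blocks. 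The paper closes this gap with an explicit dual certificate: $\hat\Y=\overline\Y\otimes n_2\I_{n_2}$ with $\overline\Y=\overline\bS+\tfrac14\L_{G_1}$, and the slack is decomposed as $\bigl(\overline\Y-\tfrac14\mathbf{D}_{G_1}-\tfrac14\I\bigr)\otimes\L_{K_{n_2}}+\bigl(\overline\Y-\tfrac14\L_{G_1}\bigr)\otimes\1_{n_2\times n_2}+\tfrac14\I\otimes\L_{G_2^{c}}$, using $\L_{G_2}+\L_{G_2^{c}}=\L_{K_{n_2}}$ to absorb the troublesome $-\tfrac14\I\otimes\L_{G_2}$ term, and a separate lemma (\Cref{lem:2}: $\overline Y_{ii}=\delta_i/2$ with $\delta_i\ge\deg_{G_1}(i)/2$, which exploits exactness of $G_1$) to show the first, diagonal, factor is nonnegative. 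Some ingredient of this kind is unavoidable, and it is exactly what your proposal is missing.
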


\ignore{
\begin{proofsketch}
 We extend the optimal cut of $G_1$ by assigning all copies of a vertex of $G_1$ the same label ($+1$ or $-1$) in $G_1 \!\bullet\! G_2$, which yields a cut of value $(|V_2|^2 \cdot \mc(G_1))$. On the dual side, the construction lifts naturally and decomposes into parts that remain feasible, so strong duality certifies optimality. Rank is preserved by taking the Kronecker product of an optimal solution of $G_1$ with the all-ones matrix. 
\end{proofsketch}
}

\begin{corollary}{\label{lex_cor}}
 $G_1 \bullet G_2 \bullet \cdots \bullet G_n$ is exact if $G_1$ is exact.   
\end{corollary}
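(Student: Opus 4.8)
The plan is to prove Corollary \ref{lex_cor} by a straightforward induction on the number of factors $n$, using Theorem \ref{thm:1} as the inductive engine. The base case $n=1$ is vacuous (or $n=2$ is exactly Theorem \ref{thm:1}), so the substance is the inductive step. The key observation is that the lexicographic product is associative: $G_1 \bullet G_2 \bullet \cdots \bullet G_n \cong G_1 \bullet (G_2 \bullet \cdots \bullet G_n)$, so that the whole product can be regrouped as a single lexicographic product whose \emph{first} factor is $G_1$.

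First I would recall (or briefly verify) associativity of $\bullet$: for vertices $(u_1,\dots,u_n)$ and $(u_1',\dots,u_n')$, adjacency in either grouping holds precisely when, letting $k$ be the smallest index with $u_k \ne u_k'$, we have $u_k u_k' \in E_k$; this common description shows both bracketings yield the same graph up to the obvious identification of vertex tuples, so $\bullet$ is associative and the iterated product $G_1 \bullet \cdots \bullet G_n$ is well defined. Next, assuming inductively that the statement holds for fewer than $n$ factors is actually unnecessary here: the cleaner route is to simply set $H := G_2 \bullet \cdots \bullet G_n$ (any graph whatsoever) and apply Theorem \ref{thm:1} directly to the pair $(G_1, H)$. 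Since $G_1$ is exact by hypothesis, Theorem \ref{thm:1} gives that $G_1 \bullet H \cong G_1 \bullet G_2 \bullet \cdots \bullet G_n$ is exact, with cut value $\phi(G_1 \bullet H) = |V(H)|^2\, \mc(G_1) = \bigl(\prod_{i=2}^n |V_i|\bigr)^2 \mc(G_1)$, and moreover every optimal rank-$r$ solution of the $\mathrm{MAXCUT\ SDP}$ for $G_1$ lifts to an optimal rank-$r$ solution for the product. In particular, taking $r=1$, a rank-$1$ optimum of $G_1$ yields a rank-$1$ optimum of the product, which is just a restatement of exactness.

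I do not anticipate a serious obstacle; the only point requiring care is the associativity identification, since Theorem \ref{thm:1} is stated for a product of exactly two graphs and one must be sure that the $n$-fold product genuinely presents as $G_1 \bullet (\text{something})$. If one prefers an honest induction instead of invoking associativity, the inductive step is: write $G_1 \bullet \cdots \bullet G_n = (G_1 \bullet \cdots \bullet G_{n-1}) \bullet G_n$; by the inductive hypothesis $G_1 \bullet \cdots \bullet G_{n-1}$ is exact; then apply Theorem \ref{thm:1} with first factor $G_1 \bullet \cdots \bullet G_{n-1}$ and second factor $G_n$ to conclude exactness of the full product. Either way the argument is short, and the rank-preservation clause of Theorem \ref{thm:1} can be propagated through the induction if one wants the stronger conclusion that every rank-$r$ optimum of $G_1$ lifts all the way to $G_1 \bullet \cdots \bullet G_n$.
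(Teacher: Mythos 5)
Your proposal is correct and matches the paper's intent: the paper leaves \Cref{lex_cor} as an immediate consequence of \Cref{thm:1}, and your argument (regroup via associativity as $G_1 \bullet (G_2 \bullet \cdots \bullet G_n)$, or equivalently induct on the number of factors, then invoke \Cref{thm:1} with exact first factor $G_1$) is exactly that intended route, including the correct cut value $\bigl(\prod_{i=2}^n |V_i|\bigr)^2 \mc(G_1)$ and the rank-preservation clause.
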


\begin{remark}
Exactness of other factors ($G_n$ for $n \geq 2$), however, may not suffice: for example, $K_3 \bullet P_3$ yields an SDP optimum of $20.25$, depicting non-exactness (being non-integral) despite $P_3$ being bipartite. While the existence of a rank-r optimal solution for exact $G_1$ extends to $G_1 \bullet G_2$, higher-rank ($> r$) optimal solutions may still appear and hence uniqueness is not preserved. For instance, $P_2$ admits a unique rank-1 optimum. Now, if $G_1 \cong G_2 \cong P_2$, then their product $G_1 \bullet G_2 \cong K_4$, which is exact and also admits higher-rank optima (see \Cref{claim:1}).
\end{remark}

\subsection{Exactness from a structural core} 

Building on the results of the previous sections, which broadened the known families of exact graphs, we now pose a more general question: can these base families serve as building blocks for certifying the exactness of more intricate graphs? If so, suitable decompositions would enable us to
certify the exactness of complex instances by reducing them to well-understood cases.
Motivated by this perspective, we introduce a structural class of graphs, which we call
\textit{split-decomposable graphs}.
This class of graphs provide a concrete example of how more intricate graphs can naturally arise
out of known results, further extending the frontier of exact graphs.
\begin{definition}[Split-decomposable graph]
A weighted graph $G=(V,E,\mathbf{w})$ is called \emph{split-decomposable} if there exists 
a vertex-split graph $\widetilde{G}$ of $G$ with the following properties:
\begin{enumerate}
    \item $\widetilde{G}$ contains a uniformly weighted spanning subgraph $\mathcal{S}'$, where $\mathcal{S}'$ is isomorphic to a vertex-split graph of $\mathcal{S}$, obtained by replacing each vertex of $\mathcal{S}$ by $p \geq 2$ copies. Let $w$ denote the uniform edge weight of $\mathcal{S}$. 
    
    \item The residual edges of $\widetilde{G}$, i.e., the edges in $E(\widetilde{G}) \setminus E(\mathcal{S}')$, are contained entirely within the vertex sets corresponding to the copies of a single vertex of $\mathcal{S}$, and each such edge has weight at most $w/p^{2}$. 
\end{enumerate}
\end{definition}

The definition captures the idea that the structural core of $G$ is governed by a uniform-weight $\mathcal{S}$, referred to as the \textit{skeleton}, which controls the main contribution to the maximum cut, while the residual edges are sufficiently light (bounded by $w/p^{2}$) to preserve exactness. Note that, by definition, $\widetilde{G}$ may be $G$ itself. The central consequence of this setup, established in the next theorem, is that the exactness of any skeleton $\mathcal{S}$ guarantees the exactness of the entire graph $G$. 

\begin{proposition}\label{thm:splitexact}
If a split-decomposable graph $G$ has a skeleton $\mathcal S$ such that $\mathcal S$ is exact, then $G$ is exact.    
\end{proposition}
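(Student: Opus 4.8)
The plan is to reduce the exactness of $G$ to the exactness of the skeleton $\mathcal{S}$ by exhibiting a primal rank-$1$ solution and a matching dual certificate on the split graph $\widetilde{G}$, then invoke \Cref{lem6} to transfer exactness back to $G$. First I would pass to the split graph $\widetilde{G}$ guaranteed by the definition; by \Cref{lem6} it suffices to prove $\widetilde{G}$ is exact. Write $\widetilde{G}$ as the edge-disjoint union $\mathcal{S}' \cup R$, where $\mathcal{S}'$ is the uniformly weighted spanning subgraph isomorphic to a $p$-fold split of $\mathcal{S}$, and $R$ collects the residual edges, all of which live inside the block of $p$ copies of a single vertex $v^\star$ of $\mathcal{S}$ and each have weight at most $w/p^2$. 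Since $\mathcal{S}$ is exact, it has a rank-$1$ optimal $\mathrm{MAXCUT\ SDP}$ solution; equivalently, there is a maximum cut $(T, T^c)$ of $\mathcal{S}$ whose value equals $\phi(\mathcal{S})$. Lift this to a cut of $\widetilde{G}$ by assigning every copy of a vertex $u$ of $\mathcal{S}$ the same side as $u$. Let $\x \in \{\pm 1\}^{V(\widetilde{G})}$ be the corresponding sign vector and $\X = \x\x^\top$ the candidate primal solution. Note that all residual edges of $R$ are monochromatic under $\x$ (both endpoints are copies of $v^\star$), so they contribute nothing to the cut, and the cut value of $\X$ is exactly that of the lifted cut of $\mathcal{S}'$, which by split-invariance of $\phi$ equals $\phi(\mathcal{S})$.

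Next I would build the dual certificate. Since $\mathcal{S}$ is exact, strong duality gives an optimal dual pair $(\Y_{\mathcal S}, \bS_{\mathcal S})$ for the $\mathrm{MAXCUT\ SDP}$ of $\mathcal{S}$, with $\bS_{\mathcal S} = \Y_{\mathcal S} - \tfrac14 \L_{\mathcal S} \succeq 0$, $\Y_{\mathcal S}$ diagonal, $\tr(\Y_{\mathcal S}) = \phi(\mathcal{S})$, and complementary slackness $\bS_{\mathcal S}\, \x_{\mathcal S} = \bz$. By \Cref{lem6} (or directly, since $\phi$ is split-invariant and the natural lift of the dual slack is again PSD — a Kronecker-type construction with the all-ones block), there is an optimal dual pair $(\widetilde{\Y}, \widetilde{\bS})$ for $\mathcal{S}'$ with the analogous properties on $V(\widetilde{G})$, and $\tr(\widetilde{\Y}) = \phi(\mathcal{S}') = \phi(\mathcal{S})$. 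The issue is that $\widetilde{\bS}$ certifies optimality for $\mathcal{S}'$, not for $\widetilde{G}$, since $\L_{\widetilde G} = \L_{\mathcal{S}'} + \L_R$ and we need $\widetilde{\Y} - \tfrac14 \L_{\widetilde G} \succeq 0$ with the same trace. Here I exploit that $\L_R$ is supported on the block $B$ of $p$ copies of $v^\star$, where $\mathcal{S}'$ itself contributes a large, "full" sub-Laplacian: because $\mathcal{S}$ is a complete-enough core (every neighbor $u$ of $v^\star$ in $\mathcal{S}$ contributes, after the $p$-fold split, a complete bipartite block of weight $w/p^2$ between the copies of $u$ and the copies of $v^\star$), the slack $\widetilde{\bS}$ restricted to $B$ dominates $\tfrac14 \L_R$ in the PSD order once the residual weights are bounded by $w/p^2$. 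Concretely, I would show $\widetilde{\bS} - \tfrac14 \L_R \succeq 0$ by checking it block-wise: off the block $B$, nothing changes; on $B$, use that $\L_R$ has operator norm at most (number of copies $-1$)$\cdot \tfrac{w}{p^2}$ bounded by a quantity already absorbed into the diagonal surplus of $\widetilde{\bS}$ on $B$ coming from the $\mathcal{S}'$-edges incident to $v^\star$. Setting $\widehat{\bS} = \widetilde{\bS} - \tfrac14 \L_R$ and keeping $\widehat{\Y} = \widetilde{\Y}$ gives a feasible dual for $\widetilde{G}$ with $\widehat{\bS} = \widehat{\Y} - \tfrac14 \L_{\widetilde G} \succeq 0$ and $\tr(\widehat{\Y}) = \phi(\mathcal{S})$, matching the primal value, so strong duality forces both to be optimal and $\X = \x\x^\top$ is a rank-$1$ optimum of $\widetilde{G}$.

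Finally, exactness of $\widetilde{G}$ and \Cref{lem6} yield exactness of $G$, completing the proof. The main obstacle is the PSD domination step: making precise why the diagonal surplus of $\widetilde{\bS}$ on the block $B$ — which comes from the join-type edges of $\mathcal{S}'$ between the $v^\star$-copies and the copies of neighbors of $v^\star$ in $\mathcal{S}$, not from edges internal to $B$ — is enough to absorb $\tfrac14 \L_R$, and that this is exactly the reason for the weight bound $w/p^2$ and the requirement $p \ge 2$. I expect this to come down to a Gershgorin- or Schur-complement-type estimate: write $\widetilde{\bS}$ in block form with $B$ as one block, observe its $(B,B)$ block equals (diagonal part from $\mathcal{S}'$-edges leaving $B$) $-$ $\tfrac14 \L_{\mathcal{S}'[B]}$, note $\mathcal{S}'[B]$ is empty so this is purely the diagonal surplus $\tfrac{w}{p^2}(\deg\text{-count})\I$, and check that subtracting $\tfrac14\L_R$ — whose diagonal entries are at most $\tfrac14 (p-1)\tfrac{w}{p^2}$ and which is diagonally dominant — keeps the block PSD; the off-diagonal blocks of $\widetilde{\bS}$ touching $B$ are unchanged, so a Schur-complement argument on the whole matrix preserves positive semidefiniteness.
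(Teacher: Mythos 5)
Your overall strategy is viable but takes a genuinely different route from the paper: the paper never constructs a perturbed dual certificate. Instead it sandwiches $\widetilde G$ edge-wise between $\mathcal S'$ and the lexicographic product $\mathcal S_{w/p^2}\bullet K_{p,w/p^2}$ (whose exactness comes from \Cref{thm:1}), notes both ends have the same value $\phi=w\cdot\mc(\mathcal S_{\mathrm{uw}})$, and concludes by the monotonicity property of $\phi$ together with \Cref{lem6}. That argument is short precisely because it outsources the hard PSD work to the already-proved \Cref{thm:1}.

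The gap in your version is the step you yourself flag as the main obstacle: the claim $\widetilde{\bS}-\tfrac14\L_R\succeq\bz$, and the justification you sketch for it would not go through. First, your accounting of the ``diagonal surplus'' is wrong: the $(B,B)$ block of $\widetilde{\bS}$ is $\widetilde\Y_{BB}-\tfrac14\Diag(\deg_{\mathcal S'})$, not ``$\tfrac{w}{p^2}(\deg$-count$)\I$''; its size is not a combinatorial degree count but depends on the optimal dual values, i.e.\ (via a \Cref{lem:2}-type computation) on how many edges at $v^\star$ cross the maximum cut. Second, no Gershgorin/diagonal-dominance estimate can close the argument, because the inequality is tight: already for $\mathcal S=K_2$, $p=2$, with one residual edge of the maximal weight $w/4$, one has $\widetilde{\bS}-\tfrac14\L_R\succeq\bz$ with a two-dimensional nullspace and the resulting matrix is not diagonally dominant. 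Third, ``the off-diagonal blocks touching $B$ are unchanged, so a Schur-complement argument preserves positive semidefiniteness'' is not a valid inference: subtracting a PSD matrix from one diagonal block can destroy PSD-ness; what you actually need is that the Schur complement of $\widetilde{\bS}$ onto $B$ dominates $\tfrac14\L_R$. The correct way to finish your route uses the explicit Kronecker structure of the split dual slack,
\[
\widetilde{\bS}\;=\;\tfrac1p\bigl(\Y_{\mathcal S}-\tfrac14\mathbf{D}_{\mathcal S}\bigr)\otimes\bigl(\I_p-\tfrac1p\1_{p\times p}\bigr)\;+\;\tfrac1{p^2}\,\bS_{\mathcal S}\otimes\1_{p\times p},
\]
together with $\L_R\preceq\tfrac{w}{p^2}\L_{K_p}=\tfrac{w}{p}\bigl(\I_p-\tfrac1p\1_{p\times p}\bigr)$ on the block $B$: since the two Kronecker factors are complementary projections, everything reduces to the single scalar inequality $(\Y_{\mathcal S}-\tfrac14\mathbf{D}_{\mathcal S})_{v^\star v^\star}\ge\tfrac w4$, i.e.\ strictly more than half of the edges at $v^\star$ cross the maximum cut. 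That strict bound must itself be proved: by complementary slackness $Y_{v^\star v^\star}=\tfrac w2\delta_{v^\star}$, and if $2\delta_{v^\star}=\deg(v^\star)$ the slack would have a zero diagonal entry while its off-diagonal entries at neighbors equal $w/4$, contradicting $\bS_{\mathcal S}\succeq\bz$; integrality then gives $2\delta_{v^\star}\ge\deg(v^\star)+1$ (this needs $v^\star$ non-isolated). None of this is in your sketch, so as written the proof is incomplete, even though the architecture can be repaired.
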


\begin{figure}   
\begin{subfigure}[b]{0.4\textwidth}
\centering
\scalebox{0.7}{%
\begin{tikzpicture}
  \begin{scope}[every node/.style={circle,draw,inner sep=1pt}]
    \node (A1) at (3.0000, 0.0000) {$A$};
    \node (A2) at (2.656371, 1.394168) {$B$};
    \node (A3) at (1.701805, 2.470597) {$C$};
    \node (B1) at (0.330667, 2.981721) {$D$};
    \node (B2) at (-1.114987, 2.785099) {$E$};
    \node (B3) at (-2.345492, 1.870471) {$F$};
    \node (C1) at (-2.924785, 0.667563) {$G$};
    \node (C2) at (-2.924785, -0.667563) {$H$};
    \node (C3) at (-2.345492, -1.870471) {$I$};
    \node (D1) at (-1.114987, -2.785099) {$J$};
    \node (D2) at (0.330667, -2.981721) {$K$};
    \node (D3) at (1.701805, -2.470597) {$L$};
    \node (E) at (2.656371, -1.394168) {$M$};
  \end{scope}
  \begin{scope}[>={Stealth[black]},
                  every node/.style={},
                  every edge/.style={draw=black, thick}]
        \path [-] (A1) edge[cedge=cyan] node {} (B1);
        \path [-] (A1) edge[cedge=cyan] node {} (B2);
        \path [-] (A1) edge[cedge=cyan] node {} (B3);
        \path [-] (A1) edge[cedge=cyan] node {} (C1);
        \path [-] (A1) edge[cedge=cyan] node {} (C2);
        \path [-] (A1) edge[cedge=cyan] node {} (C3);
        \path [-] (A1) edge[cedge=cyan] node {} (D1);
        \path [-] (A1) edge[cedge=cyan] node {} (D2);
        \path [-] (A1) edge[cedge=cyan] node {} (D3);
        \path [-] (A1) edge[cedge=green!40!gray] node {} (E);
        \path [-] (A2) edge[cedge=red!40!gray] node {} (A3);
        \path [-] (A2) edge[cedge=cyan] node {} (B1);
        \path [-] (A2) edge[cedge=cyan] node {} (B2);
        \path [-] (A2) edge[cedge=cyan] node {} (B3);
        \path [-] (A2) edge[cedge=cyan] node {} (C1);
        \path [-] (A2) edge[cedge=cyan] node {} (C2);
        \path [-] (A2) edge[cedge=cyan] node {} (C3);
        \path [-] (A2) edge[cedge=cyan] node {} (D1);
        \path [-] (A2) edge[cedge=cyan] node {} (D2);
        \path [-] (A2) edge[cedge=cyan] node {} (D3);
        \path [-] (A2) edge[cedge=green!40!gray] node {} (E);
        \path [-] (A3) edge[cedge=cyan] node {} (B1);
        \path [-] (A3) edge[cedge=cyan] node {} (B2);
        \path [-] (A3) edge[cedge=cyan] node {} (B3);
        \path [-] (A3) edge[cedge=cyan] node {} (C1);
        \path [-] (A3) edge[cedge=cyan] node {} (C2);
        \path [-] (A3) edge[cedge=cyan] node {} (C3);
        \path [-] (A3) edge[cedge=cyan] node {} (D1);
        \path [-] (A3) edge[cedge=cyan] node {} (D2);
        \path [-] (A3) edge[cedge=cyan] node {} (D3);
        \path [-] (A3) edge[cedge=green!40!gray] node {} (E);
        \path [-] (B1) edge[cedge=red!40!gray] node {} (B2);
        \path [-] (B1) edge[cedge=cyan] node {} (C1);
        \path [-] (B1) edge[cedge=cyan] node {} (C2);
        \path [-] (B1) edge[cedge=cyan] node {} (C3);
        \path [-] (B1) edge[cedge=cyan] node {} (D1);
        \path [-] (B1) edge[cedge=cyan] node {} (D2);
        \path [-] (B1) edge[cedge=cyan] node {} (D3);
        \path [-] (B1) edge[cedge=green!40!gray] node {} (E);
        \path [-] (B2) edge[cedge=red!40!gray] node {} (B3);
        \path [-] (B2) edge[cedge=cyan] node {} (C1);
        \path [-] (B2) edge[cedge=cyan] node {} (C2);
        \path [-] (B2) edge[cedge=cyan] node {} (C3);
        \path [-] (B2) edge[cedge=cyan] node {} (D1);
        \path [-] (B2) edge[cedge=cyan] node {} (D2);
        \path [-] (B2) edge[cedge=cyan] node {} (D3);
        \path [-] (D1) edge[cedge=cyan] node {} (D2);
        \path [-] (B2) edge[cedge=green!40!gray] node {} (E);
        \path [-] (B3) edge[cedge=cyan] node {} (C1);
        \path [-] (B3) edge[cedge=cyan] node {} (C2);
        \path [-] (B3) edge[cedge=cyan] node {} (C3);
        \path [-] (B3) edge[cedge=cyan] node {} (D1);
        \path [-] (B3) edge[cedge=cyan] node {} (D2);
        \path [-] (B3) edge[cedge=cyan] node {} (D3);
        \path [-] (B3) edge[cedge=green!40!gray] node {} (E);
        \path [-] (C1) edge[cedge=red!40!gray] node {} (C2);
        \path [-] (C1) edge[cedge=red!40!gray] node {} (C3);
        \path [-] (C2) edge[cedge=red!40!gray] node {} (C3);
        \path [-] (D1) edge[cedge=red!40!gray] node {} (D3);
        \path [-] (D2) edge[cedge=red!40!gray] node {} (D3);
        \path [-] (C1) edge[cedge=green!40!gray] node {} (E);
    \end{scope}
    \end{tikzpicture}}
    \caption*{Labeled Graph $\mathcal{G}$}
\end{subfigure}
\tikz[baseline=-\baselineskip]\draw[thick,implies-implies,double equal sign distance] (0,2) -- (0.5,2);\qquad
\begin{subfigure}[b]{0.4\textwidth}
\scalebox{0.7}{%
\begin{tikzpicture}
  \begin{scope}[every node/.style={circle,draw,inner sep=1pt}]
      \node (A1) at (3.0000, 0.0000) {$A$};
      \node (A2) at (2.7362, 1.2202) {$B$};
      \node (A3) at (2.0090, 2.2270) {$C$};
      \node (B1) at (0.9271, 2.8529) {$D$};
      \node (B2) at (-0.4680, 2.9631) {$E$};
      \node (B3) at (-1.5000, 2.5981) {$F$};
      \node (C1) at (-2.4271, 1.7634) {$G$};
      \node (C2) at (-2.9271, 0.6245) {$H$};
      \node (C3) at (-2.9271, -0.6245) {$I$};
      \node (D1) at (-2.4271, -1.7634) {$J$};
      \node (D2) at (-1.5000, -2.5981) {$K$};
      \node (D3) at (-0.4680, -2.9631) {$L$};
      \node (M1) at (0.9271, -2.8529) {$M_1$};
      \node (M2) at (2.0090, -2.2270) {$M_2$};
      \node (M3) at (2.7362, -1.2202) {$M_3$};
  \end{scope}
  \begin{scope}[>={Stealth[black]},
                  every node/.style={},
                  every edge/.style={draw=black, thick}]
        \path [-] (A1) edge[cedge=cyan] node {} (B1);
        \path [-] (A1) edge[cedge=cyan] node {} (B2);
        \path [-] (A1) edge[cedge=cyan] node {} (B3);
        \path [-] (A1) edge[cedge=cyan] node {} (C1);
        \path [-] (A1) edge[cedge=cyan] node {} (C2);
        \path [-] (A1) edge[cedge=cyan] node {} (C3);
        \path [-] (A1) edge[cedge=cyan] node {} (D1);
        \path [-] (A1) edge[cedge=cyan] node {} (D2);
        \path [-] (A1) edge[cedge=cyan] node {} (D3);
        \path [-] (A1) edge[cedge=cyan] node {} (M1);
        \path [-] (A1) edge[cedge=cyan] node {} (M2);
        \path [-] (A1) edge[cedge=cyan] node {} (M3);
        \path [-] (A2) edge[cedge=red!40!gray] node {} (A3);
        \path [-] (A2) edge[cedge=cyan] node {} (B1);
        \path [-] (A2) edge[cedge=cyan] node {} (B2);
        \path [-] (A2) edge[cedge=cyan] node {} (B3);
        \path [-] (A2) edge[cedge=cyan] node {} (C1);
        \path [-] (A2) edge[cedge=cyan] node {} (C2);
        \path [-] (A2) edge[cedge=cyan] node {} (C3);
        \path [-] (A2) edge[cedge=cyan] node {} (D1);
        \path [-] (A2) edge[cedge=cyan] node {} (D2);
        \path [-] (A2) edge[cedge=cyan] node {} (D3);
        \path [-] (A2) edge[cedge=cyan] node {} (M1);
        \path [-] (A2) edge[cedge=cyan] node {} (M2);
        \path [-] (A2) edge[cedge=cyan] node {} (M3);
        \path [-] (A3) edge[cedge=cyan] node {} (B1);
        \path [-] (A3) edge[cedge=cyan] node {} (B2);
        \path [-] (A3) edge[cedge=cyan] node {} (B3);
        \path [-] (A3) edge[cedge=cyan] node {} (C1);
        \path [-] (A3) edge[cedge=cyan] node {} (C2);
        \path [-] (A3) edge[cedge=cyan] node {} (C3);
        \path [-] (A3) edge[cedge=cyan] node {} (D1);
        \path [-] (A3) edge[cedge=cyan] node {} (D2);
        \path [-] (A3) edge[cedge=cyan] node {} (D3);
        \path [-] (A3) edge[cedge=cyan] node {} (M1);
        \path [-] (A3) edge[cedge=cyan] node {} (M2);
        \path [-] (A3) edge[cedge=cyan] node {} (M3);
        \path [-] (B1) edge[cedge=red!40!gray] node {} (B2);
        \path [-] (B1) edge[cedge=cyan] node {} (C1);
        \path [-] (B1) edge[cedge=cyan] node {} (C2);
        \path [-] (B1) edge[cedge=cyan] node {} (C3);
        \path [-] (B1) edge[cedge=cyan] node {} (D1);
        \path [-] (B1) edge[cedge=cyan] node {} (D2);
        \path [-] (B1) edge[cedge=cyan] node {} (D3);
        \path [-] (B1) edge[cedge=cyan] node {} (M1);
        \path [-] (B1) edge[cedge=cyan] node {} (M2);
        \path [-] (B1) edge[cedge=cyan] node {} (M3);
        \path [-] (B2) edge[cedge=red!40!gray] node {} (B3);
        \path [-] (B2) edge[cedge=cyan] node {} (C1);
        \path [-] (B2) edge[cedge=cyan] node {} (C2);
        \path [-] (B2) edge[cedge=cyan] node {} (C3);
        \path [-] (B2) edge[cedge=cyan] node {} (D1);
        \path [-] (B2) edge[cedge=cyan] node {} (D2);
        \path [-] (B2) edge[cedge=cyan] node {} (D3);
        \path [-] (D1) edge[cedge=cyan] node {} (D2);
        \path [-] (B2) edge[cedge=cyan] node {} (M1);
        \path [-] (B2) edge[cedge=cyan] node {} (M2);
        \path [-] (B2) edge[cedge=cyan] node {} (M3);
        \path [-] (B3) edge[cedge=cyan] node {} (C1);
        \path [-] (B3) edge[cedge=cyan] node {} (C2);
        \path [-] (B3) edge[cedge=cyan] node {} (C3);
        \path [-] (B3) edge[cedge=cyan] node {} (D1);
        \path [-] (B3) edge[cedge=cyan] node {} (D2);
        \path [-] (B3) edge[cedge=cyan] node {} (D3);
        \path [-] (B3) edge[cedge=cyan] node {} (M1);
        \path [-] (B3) edge[cedge=cyan] node {} (M2);
        \path [-] (B3) edge[cedge=cyan] node {} (M3);
        \path [-] (C1) edge[cedge=red!40!gray] node {} (C2);
        \path [-] (C1) edge[cedge=red!40!gray] node {} (C3);
        \path [-] (C2) edge[cedge=red!40!gray] node {} (C3);
        \path [-] (D1) edge[cedge=red!40!gray] node {} (D3);
        \path [-] (D2) edge[cedge=red!40!gray] node {} (D3);
        \path [-] (C1) edge[cedge=cyan] node {} (M1);
        \path [-] (C1) edge[cedge=cyan] node {} (M2);
        \path [-] (C1) edge[cedge=cyan] node {} (M3);
    \end{scope}
    \end{tikzpicture}}
    \caption*{\hspace*{-2cm}$\widetilde{\mathcal{G}}$}
\end{subfigure}
\vskip0.8\baselineskip
\noindent\hspace*{-2cm}\tikz[baseline=-\baselineskip]\draw[thick,implies-implies,double equal sign distance, transform canvas={xshift=-4.25cm}] (0,0.5) -- (0,0);
\vskip0.01\baselineskip
\begin{subfigure}{0.4\textwidth}
\centering
\scalebox{0.7}{%
\begin{tikzpicture}
            \begin{scope}[every node/.style={ellipse,draw}]
              \node (A) at (0,-0.9) {$A,B,C$};
              \node (D) at (5.3,-0.9) {$D,E,F$};
              \node (G) at (2.7,4) {$G,H,I$} ;
              \node (J) at (2.7,2) {$J,K,L$} ;
              \node (M) at (2.7,0) {$M_1,M_2,M_3$} ;
            \end{scope}
            \begin{scope}[>={Stealth[black]}, every node/.style={}, every edge/.style={draw=black, very thick}]
              \path[-] (A) edge (D);
              \path[-] (M) edge (D);
              \path[-] (D) edge (J);
              \path[-] (D) edge (G);
              \path[-] (M) edge (A);
              \path[-] (G) edge (A);
              \path[-] (J) edge (A);
            \end{scope}
          \end{tikzpicture}%
        }
        \caption*{Skeleton $\mathcal{S}$}
      \end{subfigure}    
\tikz[baseline=-\baselineskip]\draw[thick,implies-implies,double equal sign distance] (0,2) -- (0.5,2);\qquad
\begin{subfigure}[b]{0.4\textwidth}
\scalebox{0.7}{%
\begin{tikzpicture}
  \begin{scope}[every node/.style={circle,draw,inner sep=1pt}]
      \node (A1) at (3.0000, 0.0000) {$A$};
      \node (A2) at (2.7362, 1.2202) {$B$};
      \node (A3) at (2.0090, 2.2270) {$C$};
      \node (B1) at (0.9271, 2.8529) {$D$};
      \node (B2) at (-0.4680, 2.9631) {$E$};
      \node (B3) at (-1.5000, 2.5981) {$F$};
      \node (C1) at (-2.4271, 1.7634) {$G$};
      \node (C2) at (-2.9271, 0.6245) {$H$};
      \node (C3) at (-2.9271, -0.6245) {$I$};
      \node (D1) at (-2.4271, -1.7634) {$J$};
      \node (D2) at (-1.5000, -2.5981) {$K$};
      \node (D3) at (-0.4680, -2.9631) {$L$};
      \node (M1) at (0.9271, -2.8529) {$M_1$};
      \node (M2) at (2.0090, -2.2270) {$M_2$};
      \node (M3) at (2.7362, -1.2202) {$M_3$};
  \end{scope}
  \begin{scope}[>={Stealth[black]},
                  every node/.style={},
                  every edge/.style={draw=black, thick}]
        \path [-] (A1) edge[cedge=cyan] node {} (B1);
        \path [-] (A1) edge[cedge=cyan] node {} (B2);
        \path [-] (A1) edge[cedge=cyan] node {} (B3);
        \path [-] (A1) edge[cedge=cyan] node {} (C1);
        \path [-] (A1) edge[cedge=cyan] node {} (C2);
        \path [-] (A1) edge[cedge=cyan] node {} (C3);
        \path [-] (A1) edge[cedge=cyan] node {} (D1);
        \path [-] (A1) edge[cedge=cyan] node {} (D2);
        \path [-] (A1) edge[cedge=cyan] node {} (D3);
        \path [-] (A1) edge[cedge=cyan] node {} (M1);
        \path [-] (A1) edge[cedge=cyan] node {} (M2);
        \path [-] (A1) edge[cedge=cyan] node {} (M3);
        \path [-] (A2) edge[cedge=cyan] node {} (B1);
        \path [-] (A2) edge[cedge=cyan] node {} (B2);
        \path [-] (A2) edge[cedge=cyan] node {} (B3);
        \path [-] (A2) edge[cedge=cyan] node {} (C1);
        \path [-] (A2) edge[cedge=cyan] node {} (C2);
        \path [-] (A2) edge[cedge=cyan] node {} (C3);
        \path [-] (A2) edge[cedge=cyan] node {} (D1);
        \path [-] (A2) edge[cedge=cyan] node {} (D2);
        \path [-] (A2) edge[cedge=cyan] node {} (D3);
        \path [-] (A2) edge[cedge=cyan] node {} (M1);
        \path [-] (A2) edge[cedge=cyan] node {} (M2);
        \path [-] (A2) edge[cedge=cyan] node {} (M3);
        \path [-] (A3) edge[cedge=cyan] node {} (B1);
        \path [-] (A3) edge[cedge=cyan] node {} (B2);
        \path [-] (A3) edge[cedge=cyan] node {} (B3);
        \path [-] (A3) edge[cedge=cyan] node {} (C1);
        \path [-] (A3) edge[cedge=cyan] node {} (C2);
        \path [-] (A3) edge[cedge=cyan] node {} (C3);
        \path [-] (A3) edge[cedge=cyan] node {} (D1);
        \path [-] (A3) edge[cedge=cyan] node {} (D2);
        \path [-] (A3) edge[cedge=cyan] node {} (D3);
        \path [-] (A3) edge[cedge=cyan] node {} (M1);
        \path [-] (A3) edge[cedge=cyan] node {} (M2);
        \path [-] (A3) edge[cedge=cyan] node {} (M3);
        \path [-] (B1) edge[cedge=cyan] node {} (C1);
        \path [-] (B1) edge[cedge=cyan] node {} (C2);
        \path [-] (B1) edge[cedge=cyan] node {} (C3);
        \path [-] (B1) edge[cedge=cyan] node {} (D1);
        \path [-] (B1) edge[cedge=cyan] node {} (D2);
        \path [-] (B1) edge[cedge=cyan] node {} (D3);
        \path [-] (B1) edge[cedge=cyan] node {} (M1);
        \path [-] (B1) edge[cedge=cyan] node {} (M2);
        \path [-] (B1) edge[cedge=cyan] node {} (M3);
        \path [-] (B2) edge[cedge=cyan] node {} (C1);
        \path [-] (B2) edge[cedge=cyan] node {} (C2);
        \path [-] (B2) edge[cedge=cyan] node {} (C3);
        \path [-] (B2) edge[cedge=cyan] node {} (D1);
        \path [-] (B2) edge[cedge=cyan] node {} (D2);
        \path [-] (B2) edge[cedge=cyan] node {} (D3);
        \path [-] (B2) edge[cedge=cyan] node {} (M1);
        \path [-] (B2) edge[cedge=cyan] node {} (M2);
        \path [-] (B2) edge[cedge=cyan] node {} (M3);
        \path [-] (B3) edge[cedge=cyan] node {} (C1);
        \path [-] (B3) edge[cedge=cyan] node {} (C2);
        \path [-] (B3) edge[cedge=cyan] node {} (C3);
        \path [-] (B3) edge[cedge=cyan] node {} (D1);
        \path [-] (B3) edge[cedge=cyan] node {} (D2);
        \path [-] (B3) edge[cedge=cyan] node {} (D3);
        \path [-] (B3) edge[cedge=cyan] node {} (M1);
        \path [-] (B3) edge[cedge=cyan] node {} (M2);
        \path [-] (B3) edge[cedge=cyan] node {} (M3);
        \path [-] (C1) edge[cedge=cyan] node {} (M1);
        \path [-] (C1) edge[cedge=cyan] node {} (M2);
        \path [-] (C1) edge[cedge=cyan] node {} (M3);
    \end{scope}
    \end{tikzpicture}}
    \caption*{\hspace*{-2cm}Spanning subgraph $\mathcal{S'}$}
\end{subfigure}
\caption{\centering Decomposition of $\mathcal{G}$. Edge colors indicate weights: \textcolor{red!40!gray}{Red} (1), \textcolor{cyan}{Blue} (2), \textcolor{green!40!gray}{Green} (6) \& \textcolor{black}{Black} (18).}
    \label{fig:labelledexample}
\end{figure}

As an illustration, we revisit \Cref{example1} and verify that it is split-decomposable with exact skeleton. Consider the vertex labeling of the graph of \Cref{example1}, as shown in \Cref{fig:labelledexample}. First, we perform a split on the vertex $M$ into three distinct vertices, $M_1, M_2, M_3$, to obtain $\mathcal{\widetilde{G}}$. Within $\mathcal{\widetilde{G}}$, observe the existence of a spanning subgraph $\mathcal{S'}$ with uniform edge weights. This subgraph $\mathcal{S'}$ is isomorphic to a vertex-split graph of $\mathcal{S}$, as shown in \Cref{fig:labelledexample}. $\mathcal{S}$ is isomorphic to uniformly weighted $K(1,1,3)$ and hence, is exact by \Cref{claim:2}. Furthermore, the edges in $E(\mathcal{\widetilde{G}}) \setminus E(\mathcal{S'})$ have maximum weight $w_{max} = w_{JK} = 2 \leq \frac{18}{3^2}$. Hence, $\mathcal{G}$ is a split-decomposable graph with an exact skeleton, and by \Cref{thm:splitexact} we conclude that $\mathcal{G}$ is exact.

\begin{remark}
Through \Cref{example1}, we wish to highlight that provided a decomposition of a graph $G$, the factorization can be utilized to sufficiently identify exactness of $G$. However, we do not propose a general algorithm for obtaining such a decomposition. The existence of such an algorithm which efficiently decomposes a given graph $G$ to determine its exactness remains an open direction for future research. 
\end{remark}

\section{Proofs}\label{sec5}
In this section, we present the proofs of the results discussed in this work. We begin with introducing some additional notation necessary for the proofs, and highlight a few key results that are invoked during the discussion of the proofs. 
\subsection{Additional Notation}
Given $\A \in \mathbb{R}^{n \times n}$, we use $\A \succeq \mathbf{0}$ $(\A \succ \mathbf{0})$ to denote positive semidefiniteness (positive definiteness) of $\A$. Through $\mathcal{N}(\A)$, $\range(\A)$, $\tr(\A)$ , $\lambda_{\max}(\A)$ and $\lambda_{\min}(\A)$, we denote the nullspace, range, trace, maximum eigenvalue and minimum eigenvalue, respectively for a matrix $\A$. For $\X\in\mathbb{R}^{n\times n}$ write $\diag(X):=(X_{11},\dots,X_{nn})^\top$. $\I_n$ represents the $n \times n$ identity matrix, $\1_m$ represents the $\bm$-dimensional all-ones vector and $\1_{m \times n}$ represents the $m \times n$ all-ones matrix. Kronecker product of two matrices $\A$ and $\B$ is denoted by $\A\otimes \B$. Given any matrix $\mathbf{B}$, let $\mathbf{B}^+$ be its Moore-Penrose pseudo-inverse. Given a nonzero vector \( \bv \in \mathbb{R}^n \), we denote by $\mathbf{v}^\perp = \{ \x \in \mathbb{R}^n : \x^{\top} \mathbf{v}  = 0 \}$ the orthogonal complement of the span of \( \mathbf{v} \).

\subsection{Some useful results}
\begin{enumerate}
\setlength\itemsep{1em}
\item  If $\mathbf{A}, \mathbf{B}$ are symmetric matrices, $\mathbf{A} - \mathbf{B} \succeq \bz$ if $\lambda_{\min} (\mathbf{A}) - \lambda_{\max} (\mathbf{B}) \geq \bz$.
 \item Consider a symmetric matrix $\mathbf{M}$ of the form $\mathbf{M} =$ $\begin{bmatrix}
\mathbf{A} & \mathbf{C} \\
\mathbf{C}^{\top} & \mathbf{B}
\end{bmatrix}$, where $\mathbf{A}$ and $\mathbf{B}$ are symmetric matrices of appropriate dimensions. The following results hold (see  \citep{gallier2020schur} for details):
    \begin{enumerate}
        \item If $\mathbf{B} \succ \bz$, then $\mathbf{M} \succeq \bz$ iff $\mathbf{A} - \mathbf{C} \mathbf{B}^{-1} \mathbf{C}^{\top} \succeq \bz$.
        \item If $\mathbf{B} \succeq \bz$, $\mathbf{M} \succeq \mathbf{0}$ iff $\range(\mathbf{C}^{\top}) \subseteq \range(\mathbf{B})$ and $\mathbf{A} - \mathbf{C} \mathbf{B}^{+} \mathbf{C}^{\top} \succeq \mathbf{0}$.
        \item (\cite[Theorem 1]{carlson1974generalization}) \textit{(Rank inequality for the pseudo-Schur complement)} $\rank(\mathbf{M}) \geq \rank(\mathbf{B}) + \rank(\mathbf{A} - \mathbf{C} \mathbf{B}^{+} \mathbf{C}^{\top})$. 
    \end{enumerate} 
\item
\begin{enumerate}
    \item $\rank(\A \otimes \B) =  \rank(\A)\rank(\B)$. 
    \item For any positive semidefinite matrices $\mathbf{A}, \mathbf{B}$: $\lambda_{\max} (\mathbf{A} \otimes \mathbf{B}) = \lambda_{\max} (\mathbf{A}) \times \lambda_{\max} (\mathbf{B})$ and $\lambda_{\min} (\mathbf{A} \otimes \mathbf{B}) = \lambda_{\min} (\mathbf{A}) \times \lambda_{\min} (\mathbf{B})$.
\end{enumerate}

\item Given subspaces $U, V \subseteq \mathbb{R}^n$, the \emph{Grassmann identity} states that
\[
\dim(U+V) = \dim U + \dim V - \dim(U \cap V)
\]
where $U + V := \{\, u + v \mid u \in U,\, v \in V \,\}$.

\item Following are some useful properties of Laplacian of a graph:

\begin{enumerate}
        \item For an unweighted graph \(G\) with \(|V|=n\) we have \(\lambda_{\min}(\L_G)=0\) and \(\lambda_{\max}(\L_G)\le n\). Moreover
\[
\lambda_{\max}(\L_G)\le\max_{(u,v)\in E}(\deg_G(u)+\deg_G(v)),
\]
and this bound is tight iff \(G\) is bipartite regular.
    \item \textit{(Laplacian complement identity)}. Let the Laplacian eigenvalues of $G$ be $0 = \lambda_1 \le \lambda_2 \le \cdots \le \lambda_{\max}$. Then, the Laplacian eigenvalues of its complement $G^c$ are given by $0, \; n - \lambda_{\max}, \; n - \lambda_{n-1}, \; \dots, \; n - \lambda_2$.
    \item $\dim \mathcal{N}(\L_G)=$ number of connected components of $G$.
\end{enumerate}
\end{enumerate}

\subsection{Rank Identity Lemma}

The following matrix rank identity will be used in the subsequent proofs:

\begin{lemma}\label{sylvester}
   For any optimal primal solution $(\X^*)$ for $\mathrm{MAXCUT\ SDP}$ and corresponding optimal dual solution $(\bS^*)$ for $\mathrm{MAXCUT\ DUAL\ SDP}$, the following equality holds:
   $\rank(\X^*) + \rank(\bS^*) = n - \dim \left( \mathcal{N}(\X^*) \cap \mathcal{N}(\bS^*)\right).$
\end{lemma}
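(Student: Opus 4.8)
The claim relates the ranks of an optimal primal–dual pair via the dimension of the intersection of their nullspaces. The natural tool is complementary slackness together with the Grassmann identity (listed as a useful result in the excerpt). First I would invoke strong duality for the $\mathrm{MAXCUT\ SDP}$ (already asserted in the excerpt via \cite{vanboyd}): since both $\X^*$ and $\bS^*$ are optimal and the duality gap is zero, complementary slackness gives $\langle \X^*, \bS^* \rangle = 0$. Both matrices are positive semidefinite, so $\tr(\X^* \bS^*) = 0$ forces $\X^* \bS^* = \bz$, which is equivalent to $\range(\bS^*) \subseteq \mathcal{N}(\X^*)$, i.e. $\mathcal{N}(\bS^*)^\perp \subseteq \mathcal{N}(\X^*)$.

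\textbf{Key steps.} With the containment $\mathcal{N}(\bS^*)^\perp \subseteq \mathcal{N}(\X^*)$ in hand, I would work with the subspaces $U := \mathcal{N}(\X^*)$ and $V := \mathcal{N}(\bS^*)$. Taking orthogonal complements, $\mathcal{N}(\bS^*)^\perp \subseteq \mathcal{N}(\X^*)$ is the same as $\mathcal{N}(\X^*)^\perp \subseteq \mathcal{N}(\bS^*)$; in particular $U^\perp + V^\perp = V$ has dimension $n - \rank(\bS^*)$ — wait, more cleanly: since $U^\perp \subseteq V$, we have $U + V = \mathbb{R}^n$ (taking complements of $U^\perp \subseteq V$ gives $U \supseteq V^\perp$, hence $U + V \supseteq V^\perp + V = \mathbb{R}^n$). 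Now apply the Grassmann identity to $U$ and $V$:
\[
\dim(U + V) = \dim U + \dim V - \dim(U \cap V).
\]
Since $\dim(U+V) = n$, $\dim U = \dim \mathcal{N}(\X^*) = n - \rank(\X^*)$, and $\dim V = n - \rank(\bS^*)$, substituting yields
\[
n = (n - \rank(\X^*)) + (n - \rank(\bS^*)) - \dim(\mathcal{N}(\X^*) \cap \mathcal{N}(\bS^*)),
\]
which rearranges exactly to $\rank(\X^*) + \rank(\bS^*) = n - \dim(\mathcal{N}(\X^*) \cap \mathcal{N}(\bS^*))$, as desired.

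\textbf{Main obstacle.} The only real content is the passage from $\langle \X^*, \bS^* \rangle = 0$ to $\X^*\bS^* = \bz$ and hence to the subspace containment $\mathcal{N}(\X^*)^\perp \subseteq \mathcal{N}(\bS^*)$; this uses positive semidefiniteness of both matrices (write $\X^* = \sum \lambda_i v_i v_i^\top$ with $\lambda_i > 0$ and note $\tr(\X^* \bS^*) = \sum \lambda_i\, v_i^\top \bS^* v_i$ is a sum of nonnegative terms, each of which must vanish, so $\bS^* v_i = \bz$ for every $i$, i.e. $\range(\X^*) \subseteq \mathcal{N}(\bS^*)$). Everything after that is a mechanical application of the Grassmann identity and the rank–nullity theorem, so I do not anticipate any genuine difficulty; care is only needed to keep the orthogonal-complement bookkeeping straight (equivalently, one can phrase the whole argument in terms of $U + V = \mathbb{R}^n$ directly from $\range(\X^*) \subseteq \mathcal{N}(\bS^*)$ without ever passing to complements twice).
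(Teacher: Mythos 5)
Your proposal is correct and follows essentially the same route as the paper: complementary slackness from strong duality gives $\X^*\bS^*=\bz$, hence the nullspaces of $\X^*$ and $\bS^*$ together span $\R^n$, and the Grassmann identity with rank–nullity yields the stated equality. The only difference is cosmetic — you spell out why $\tr(\X^*\bS^*)=0$ with both matrices positive semidefinite forces $\X^*\bS^*=\bz$, a step the paper leaves implicit.
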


\begin{proof}
    By strong duality, any optimal primal--dual pair $(\X^*,\bS^*)$ satisfies 
$\X^*\bS^*=\bS^*\X^*=\bz$. Hence $\range(\X^*)\subseteq \mathcal{N}(\bS^*)$ and 
$\range(\bS^*)\subseteq \mathcal{N}(\X^*)$, which implies that 
$\mathcal{N}(\X^*)+\mathcal{N}(\bS^*)=\R^n$. Using Grassman identity, we have
\[
n = \dim \mathcal{N}(\X^*) + \dim \mathcal{N}(\bS^*) 
    - \dim \bigl(\mathcal{N}(\X^*) \, \cap \, \mathcal{N}(\bS^*)\bigr).
\]
The desired equality then follows from the rank-nullity theorem.
\end{proof}

To establish exactness for the graph classes considered in this manuscript, we construct feasible primal and dual solutions that attain the same objective value, thereby certifying optimality. From the reference optimal rank-1 primal solution, a maximum cut can be explicitly extracted for each graph family. 

\subsection{Proof of \Cref{claim:1}}\label{proof1}
Let $m_1 = m_2 = n$. Observe that the graphs $ G_K,G_{A},G_{B} $  have no edges in common. The Laplacian matrix of the union graph $G'$ is therefore, simply the sum of the individual Laplacians: $\L_{G'} = \L_{G_K} + \L_{G_{A}} + \L_{G_{B}}$. Since $ G_K $ is an $ n-$ regular bipartite graph, $ \lambda_{\max}(\L_{G_K}) = 2n $ and it follows that  
\begin{equation*}
 \lambda_{\max}(\L_{G'}) \geq \lambda_{\max}(\L_{G_K}) = 2n   
\end{equation*}
 Since $\lambda_{max}$ of the Laplacian of any graph is upper bounded by the number of vertices, we conclude that $ \lambda_{\max}(\L_{G'}) = 2n $. Now, take $\x \in \R^{|V_K|}$ such that $\x = \ind_{V_A} - \ind_{V_B}$. Then $z_{\mathsf{\mc}}$ at $ \X^* = \x\x^{\top} $ is $ n^2 $. We obtain the same value for MAXCUT DUAL by taking $ Y = \frac{n}{2} \I_{2n} $. Furthermore, the dual slack matrix $\bS^* = \Y - \frac{1}{4} \L_{G'}$ is positive semidefinite, since: 
 
\begin{equation}\label{dual_feasibilty}
\lambda_{\min} (\Y) - \lambda_{\max} (\frac{1}{4}\L_{G'}) = 0.    
\end{equation}

Thus, the dual constraint is satisfied, and by strong duality $ \X^* = \x\x{^{\top}} $ is an optimal solution.  

To prove uniqueness, first observe that the dual optimal matrix admits the block form
\[
\bS^* = \begin{pmatrix}
\mathbf{A} & \frac{1}{4}\1_{n \times n} \\[3pt]
\frac{1}{4}\1_{n \times n} & \mathbf{B}
\end{pmatrix},\]

$\text{where} \; \; 
\mathbf{A}=\tfrac14(n \I_n-\L(G_A)), \; \text{and} \;
\mathbf{B}=\tfrac14(n\I_n-\L(G_B))$.\\

From the optimality of $(\X^*, \bS^*)$, it follows by \Cref{sylvester} that

\begin{equation}\label{sylv}
\rank(\X^*)+\rank(\bS^*) \;=\; 2n - k_0, 
\end{equation}
$\text{where}\; k_0:= \dim \left(\mathcal{N}(\X^*) \cap \mathcal{N}(\bS^*)\right)$.\\

On the other hand, rank inequality for the pseudo-Schur complement yields
\[
\rank(\bS^*) \;\geq\; \rank(\B)+\rank \left(\A-\frac{1}{16}\1_{n \times n}\B^+\1_{n \times n}\right).
\]
Since $\1_n$ is a non-zero eigenvector of $\B$; $\1_{n \times n}\B^+\1_{n \times n}$ is a rank-1 matrix and we deduce
\[
\rank(\bS^*) \;\ge\; (n-k_B)+(n-k_A)-1 \;=\; 2n-(k_A+k_B)-1.
\]
where $k_A=\dim\mathcal{N}(\A)$ and $k_B=\dim\mathcal{N}(\B)$.
Combining with \Cref{sylv} gives
\[
\rank(\X^*) \;\le\; (k_A+k_B)-k_0+1.
\]
Now, observe that $\1_n$ is a nonzero eigenvector of $\mathbf{A}$. 
Hence, every vector $\bv \in \mathcal{N}(\mathbf{A})$ is orthogonal to $\1_n$, as eigenvectors of a symmetric matrix 
associated with distinct eigenvalues are orthogonal. This implies that all null-space directions of $\A$ (and similarly 
of $\B$) lie entirely in the subspace orthogonal to $\1_n$. Consequently, any $\bv \in \mathcal{N}(\A)$ or $\bw \in \mathcal{N}(\B)$ extends naturally to a vector 
$(\bv^{\top},\mathbf{0}^{\top})^{\top} \in \R^{2n}$ or $(\mathbf{0}^{\top},\bw^{\top})^{\top} \in \R^{2n}$, respectively that is annihilated by both $\bS^*$ and $\X^*$. 

Hence
$k_0 \ge \max\{k_A,k_B\}$ which implies
\[
\rank(\X^*) \;\leq\; 1+\min\{k_A,k_B\}.
\]

Now $\A$ is singular if and only if $\lambda_{\max}(\L(G_A))=n$, which is equivalent to $\lambda_2(\L(G_A^c))=0$, i.e.\ $G_A^c$ is disconnected. Thus $\A$ is nonsingular (so $k_A=0$)
iff $G_A^c$ is connected, and similarly for $\B$ and $G_B^c$. The assumption that at least one of $G_A^c,G_B^c$
is connected implies $\min\{k_A,k_B\}=0$. It follows that $\rank(\X^*) = 1$ as the zero matrix is not feasible. If there were two distinct rank-1 optima, their convex combination would have higher-rank, contradicting the bound. Thus the optimal  solution must be unique. The proof of the necessity direction relies on structural properties of complete-$k$-partite graphs and is deferred to \Cref{onlyif}.\myQED

\subsubsection{Proof of \Cref{claim:3}}
    Exactness can be verified immediately as an application of \Cref{prop1}. For non-uniqueness observe that \( \x = (\x_A^\top, \x_B^\top)^\top \), where \( \x_A \in \{\pm 1\}^{|V_A|} \) and  $\x_B \in  \{\pm 1\}^{|V_B|}$  are the cut-sign vectors corresponding to the optimal Max-Cut partitions of \( G_A \) and \( G_B \), respectively is an optimal solution; flipping the signs in one block also gives an optimal solution, as in $(-\x_A^{\top}, \x_B^{\top})^{\top}$. \myQED

\ignore{
\begin{proof}
  Since $ G_K,G_{A},G_{B} $  have no edges in common, we have $ \lambda_{\max}(\L_{G'}) = m_1+m_2 $. Let \( \x = (\x_A^\top, \x_B^\top)^\top \), where \( \x_A \in \{\pm 1\}^{|V_A|} \) and  $\x_B \in  \{\pm 1\}^{|V_B|}$  are the cut-sign vectors corresponding to the optimal Max-Cut partitions of \( G_A \) and \( G_B \), respectively.This gives 

    \begin{align*}
\x^\top \L_{G'} \x 
  &= \x^\top \L_{G_K} \x 
   + \x_A^\top \L_{G_{A}} \x_A 
   + \x_B^\top \L_{G_{B}} \x_B \\
  &= \frac{m_1 m_2}{2} + \frac{m_1^2}{4} + \frac{m_2^2}{4} \\
  &= \frac{(m_1+m_2)^2}{4}
\end{align*}

For constructing a dual with same objective value, take $\Y = \frac{m_1+m_2}{4} \times \I_{m_1 + m_2}$. This dual can be verified to be feasible because $\lambda_{\min}(\Y) - \frac{1}{4}\lambda_{\max}(\L_{G'}) \geq 0$. \\

Closer inspection shows that at least two distinct partitions attain the same optimal value 
\(\frac{(m_1 + m_2)^2}{4}\): flipping the sign of all entries in one block, say \(\x_A\), leaves the cut within \(G_A\) unchanged due to its balanced structure, while the complete bipartite form of \(G_K\) together with the balancedness of \(G_B\) ensures that the total cut across \(V_A\) and \(V_B\) is also preserved. Thus, $(-\x_A^{\top}, \x_B^{\top})^{\top}$  remains optimal. Any non-trivial convex combination of two distinct rank-1 optimizers is a feasible positive semidefinite matrix of higher rank that also attains the same optimal objective value, thereby demonstrating the existence of higher-rank optimal matrix.
\end{proof}
}

\subsection{Proof of \Cref{claim:2}}{\label{proof2}}
Without loss of generality, let 
\[
V_A=\{1,\dots,m_1\}, \qquad V_B=\{m_1+1,\dots,m_1+m_2\},
\]
and let $d_B := \max \deg_{G[V_B]} (v)$ for $v \in V_B$. We take the primal cut $\x \in \R^{|V_K|}$ such that $\x = \ind_{V_A} - \ind_{V_B}$ and the diagonal dual 
\[
\Y^* = \operatorname{diag}\!\left(\underbrace{\dfrac{m_2}{2},\dots,\dfrac{m_2}{2}}_{m_1},
\underbrace{\dfrac{m_1}{2},\dots,\dfrac{m_1}{2}}_{m_2}\right).
\]
Clearly,
\[
z_{\text{MAXCUTDUAL}} = \mathrm{tr}(\Y^*) = m_1 m_2 = z_{\text{MAXCUT}}.
\]

The dual slack takes block form
\[
\bS^* = \Y^* - \tfrac14\,\L_{G'} 
= \begin{bmatrix} \A & \frac{1}{4}\1_{m_1\times m_2} \\ \frac{1}{4}\1_{m_2\times m_1} & \B \end{bmatrix},
\]
with
\[
\A = \tfrac14(m_2 \I_{m_1} - \L_{G_A}), \qquad 
\B = \tfrac14(m_1 \I_{m_2} - \L_{G_B}).
\]
Since $\lambda_{\max}(\L_{G_B}) \le 2d_B$, the assumption $2d_B \le m_1$ ensures
\[
\lambda_{\min}(\B) 
\;\ge\; \dfrac{m_1 - \lambda_{\max}(\L_{G_B})}{4} \;\ge\; 0,
\]
which implies that $\B\succeq \bz$. To conclude that $\bS^*\succeq \bz$, we check its pseudo-Schur complement with respect to $\B$:
\begin{equation} \label{schur1}
\A - \1_{m_1 \times m_2} \B^{+} \1_{m_1 \times m_2}
= \tfrac14(m_2 \I_{m_1} - \L_{G_A}) 
- \tfrac{1}{16}\,\1_{m_1}\left(\1_{m_2}^\top \B^{+}\1_{m_2}\right)\1_{m_1}^\top.
\end{equation}

Since $\L_{G_B}\1_{m_2}=0$, we have $\B\1_{m_2}=\tfrac{m_1}{4}\1_{m_2}$ and hence 
\begin{equation}\label{schur2}
 \1_{m_2}^\top \B^{+}\1_{m_2}=\dfrac{4m_2}{m_1}   
\end{equation}
Substituting  \Cref{schur2} in \Cref{schur1} gives
\[
\A - \1_{m_1 \times m_2} \B^{+} \1_{m_2 \times m_1}
= \tfrac14(m_2 \I_{m_1} - \L_{G_A}) 
- \tfrac{m_2}{4m_1}\,\1_{m_1}\1_{m_1}^\top.
\]

For any vector $\bv \in \1_{m_1}^\perp$, we have $\1_{m_1}^\top \bv = 0$, and consequently 
\[
\1_{m_1}\1_{m_1}^\top \bv = \bz.
\]
Applying the pseudo-Schur complement to such vectors yields
\[
\bv^\top \!\left(\tfrac{1}{4}\bigl(m_2 \I_{m_1} - \L_{G_A}\bigr)\right) \bv,
\]
Since $\lambda_{\max}(\L_{G_A}) \le m_1 < m_2$, $\!\left(\tfrac{1}{4}\bigl(m_2 \I_{m_1} - \L_{G_A}\bigr)\right)$ matrix is positive definite on $\1_{m_1}^\perp$.

Furthermore, note that $\L_{G_A}\1_{m_1} = \bz$. Hence,
\[
\left(\tfrac{1}{4}\bigl(m_2 \I_{m_1} - \L_{G_A}\bigr) 
- \tfrac{m_2}{4m_1}\,\1_{m_1}\1_{m_1}^\top\right)\1_{m_1} = \bz,
\]
which shows that $\1_{m_1}$ lies in the nullspace of the pseudo-Schur complement. Hence the pseudo-Schur complement is positive semidefinite, and with $\range(\1_{m_2\times m_1}) \subseteq \range(\B)$ and $\B\succeq \bz$, we conclude that $\bS^*\succeq \bz$. Dual feasibility and strong duality then imply that $\X^*=\x\x^\top$ is an optimal primal solution, proving exactness.

For the uniqueness argument, first note that the leading block  
\[
\A=\tfrac14(m_2 \I_{m_1}-\L_{G_A})
\]
is positive definite whenever \(m_1 < m_2\), since
\[
\lambda_{\min}(\A) \;=\; \frac{m_2 - \lambda_{\max}(\L_{G_A})}{4} \;>\; 0,
\]
using \(\lambda_{\max}(\L_{G_A}) \le m_1 < m_2\). Hence \( \dim \mathcal{N} (\A)=0\).

Following the same reasoning as in \Cref{proof1}, i.e. applying \Cref{sylvester}, and rank inequality for the pseudo-Schur complement 
\begin{equation}\label{guttman}
    \rank(\X^*) \;\le\; k_B - k_0 + 1,
\end{equation}
where \(k_B = \dim \mathcal{N}(\B)\) and  
$k_0 = \dim\left( \mathcal{N}(\X^*) \cap \mathcal{N}(\bS^*) \right)$. Also, observe that for any \(\mathbf{w} \in \mathcal{N}(\B)\), the vector \((\bz^{\top}, \mathbf{w}^{\top})^{\top}\) lies in both \(\mathcal{N}(\bS^*)\) and \(\mathcal{N}(\X^*)\). Indeed, we have
\[
(\bz^{\top}, \mathbf{w}^{\top}) (\ind_{V_A} - \ind_{V_B})
= -\,\mathbf{w}^{\top}\1_{m_2} = 0,
\]
since \(\B\1_{m_2} = \tfrac{m_1}{4}\,\1_{m_2}\) implies \(\mathbf{w}^{\top} \1_{m_2} = 0\).
Thus \(k_B \le k_0\). Combining this with \eqref{guttman} and infeasibility of the zero matrix yields \(\rank(\X^*) = 1\). Finally, any nontrivial convex combination of distinct rank-1 optima would yield a higher-rank solution, and therefore the optimal primal matrix \(\X^*\) is unique.\myQED

\subsection{Proof of \Cref{thm:1}}{\label{sec4.5}}
We use the following lemmas to prove \Cref{thm:1}: 

\begin{lemma}\label{lem:1}
The Laplacian of $G_1 \bullet G_2$ is given by:
$$\L_{G_1 \bullet G_2} = \I_m \otimes\mathbf{D}_{G_2} + \mathbf{D}_{G_1} \otimes n \I_n + \I_m \otimes (\L_{G_2} - \mathbf{D}_{G_2}) + (\L_{G_1} - \mathbf{D}_{G_1}) \otimes \1_{n \times n},$$ where $\mathbf{D}_{G}$ is the degree matrix of graph $G$, $G_1$ is a graph on $m$ vertices and $G_2$ is a graph on $n$ vertices.
\end{lemma}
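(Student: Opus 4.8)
The plan is to verify the identity directly from the definitions, exploiting the Kronecker-product structure induced by an appropriate ordering of the vertices of $G_1\bullet G_2$. Write $V_1=[m]$ and $V_2=[n]$, and index the vertices of $G_1\bullet G_2$ by pairs $(u,v)$, ordered lexicographically so that $u\in V_1$ is the ``block'' coordinate and $v\in V_2$ the coordinate within a block. With this fixed ordering, a matrix of the form $\mathbf{M}_1\otimes\mathbf{M}_2$ has $((u,v),(u',v'))$-entry $(\mathbf{M}_1)_{uu'}(\mathbf{M}_2)_{vv'}$. Since $\L_H=\mathbf{D}_H-\mathbf{A}_H$ for any graph $H$ (where $\mathbf{A}_H$ is the adjacency matrix), it suffices to compute the adjacency and degree matrices of $G_1\bullet G_2$ and then regroup.

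First I would compute the adjacency matrix. By definition, $(u,v)$ and $(u',v')$ are adjacent in $G_1\bullet G_2$ exactly when either $uu'\in E_1$, or $u=u'$ and $vv'\in E_2$. These two cases are mutually exclusive because $G_1$ is simple (so $uu'\in E_1$ forces $u\neq u'$), hence the two contributions add with no overlap: the first case puts a full all-ones block $\1_{n\times n}$ between blocks $u$ and $u'$ precisely when $uu'\in E_1$, giving $\mathbf{A}_{G_1}\otimes\1_{n\times n}$; the second case puts $\mathbf{A}_{G_2}$ on each diagonal block, giving $\I_m\otimes\mathbf{A}_{G_2}$. Thus $\mathbf{A}_{G_1\bullet G_2}=\mathbf{A}_{G_1}\otimes\1_{n\times n}+\I_m\otimes\mathbf{A}_{G_2}$. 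Similarly, a vertex $(u,v)$ is adjacent to all $n$ vertices of block $u'$ for each of the $\deg_{G_1}(u)$ neighbours $u'$ of $u$, together with the $\deg_{G_2}(v)$ neighbours of $v$ inside block $u$; hence $\deg_{G_1\bullet G_2}(u,v)=n\deg_{G_1}(u)+\deg_{G_2}(v)$, which in matrix form is $\mathbf{D}_{G_1\bullet G_2}=n\,\mathbf{D}_{G_1}\otimes\I_n+\I_m\otimes\mathbf{D}_{G_2}=\mathbf{D}_{G_1}\otimes n\I_n+\I_m\otimes\mathbf{D}_{G_2}$.

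Finally I would subtract and rearrange. Using $\L_{G_1\bullet G_2}=\mathbf{D}_{G_1\bullet G_2}-\mathbf{A}_{G_1\bullet G_2}$ and the two displays above,
$$\L_{G_1\bullet G_2}=\I_m\otimes\mathbf{D}_{G_2}+\mathbf{D}_{G_1}\otimes n\I_n-\I_m\otimes\mathbf{A}_{G_2}-\mathbf{A}_{G_1}\otimes\1_{n\times n}.$$
Substituting $-\mathbf{A}_{G_2}=\L_{G_2}-\mathbf{D}_{G_2}$ and $-\mathbf{A}_{G_1}=\L_{G_1}-\mathbf{D}_{G_1}$ and using bilinearity of $\otimes$ produces exactly the claimed expression. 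There is no substantive obstacle here; the only points that need care are keeping the vertex ordering fixed consistently so that the Kronecker factorizations are literally correct, and confirming that the two adjacency cases are disjoint so that no edge of $G_1\bullet G_2$ is double-counted.
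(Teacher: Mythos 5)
Your proposal is correct and follows essentially the same route as the paper: compute $\mathbf{A}_{G_1\bullet G_2}=\I_m\otimes\mathbf{A}_{G_2}+\mathbf{A}_{G_1}\otimes\1_{n\times n}$ and $\mathbf{D}_{G_1\bullet G_2}=\I_m\otimes\mathbf{D}_{G_2}+\mathbf{D}_{G_1}\otimes n\I_n$, then apply $\L=\mathbf{D}-\mathbf{A}$ and regroup. Your write-up simply makes explicit the vertex-ordering and disjointness-of-cases details that the paper leaves implicit.
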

\begin{proof}
By construction of the lexicographic product, the adjacency matrix $\mathbf{A}$ of $G_1 \bullet G_2$ is $\mathbf{A}_{G_1 \bullet G_2} =
\I_m \otimes \mathbf{A}_{G_2} + \mathbf{A}_{G_1}\otimes \1_{n \times n}$ and the degree matrix of $G_1 \bullet G_2$ is $\mathbf{D}_{G_1 \bullet G_2} = \I_m \otimes \mathbf{D}_{G_2} + \mathbf{D}_{G_1} \otimes n \I_n$, where $m,n$ are cardinalities of vertex sets of $G_1$ and $G_2$, respectively. Proof follows from the identity $\L = \mathbf{D} - \mathbf{A}$.
(See \citep{barik2015laplacian} for details on spectra of lexicographic product).
\end{proof}

\begin{lemma}\label{lem:2}
Let $G(V,E)$ be an exact unweighted graph, and let $(\overline{\X}, \overline{\bS})$ be an optimal primal--dual pair of the $\mathrm{MAXCUT\ SDP}$ such that $\overline{\X}$ is rank-1. 
Define $\Y := \overline{\bS} + \tfrac{1}{4}\L_G$. 
Then for every vertex $i \in V$,
\[
Y_{ii} = \frac{\delta_i}{2},
\]
where $\delta_i = \bigl|\{\, j \in V : (i,j) \in E,\; x_i \neq x_j \,\}\bigr|$ 
is the number of edges incident on vertex $i$ crossing the cut induced by $\x$. 
Moreover, $\delta_i \ge \tfrac{\deg_G(i)}{2}$.
\end{lemma}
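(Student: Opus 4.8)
The plan is to extract the identity from complementary slackness together with the diagonal structure of the dual. First I would record that, since the $\mathrm{MAXCUT\ SDP}$ satisfies strong duality, an optimal primal--dual pair satisfies $\langle\overline{\X},\overline{\bS}\rangle=0$, and as both matrices are positive semidefinite this forces $\overline{\X}\,\overline{\bS}=\bz$. Because $\overline{\X}$ is rank-$1$ with unit diagonal we may write $\overline{\X}=\x\x^{\top}$ with $\x\in\{\pm1\}^{|V|}$; multiplying $\x\x^{\top}\overline{\bS}=\bz$ on the left by $\x^{\top}$ and using $\x^{\top}\x=|V|>0$ yields $\x^{\top}\overline{\bS}=\bz$, hence $\overline{\bS}\x=\bz$ by symmetry. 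Since $\overline{\bS}$ is the slack matrix of a feasible point of $\mathrm{MAXCUT\ DUAL\ SDP}$, the matrix $\Y=\overline{\bS}+\tfrac14\L_G$ is diagonal (it equals $\Diag(y^{*})$ for the corresponding dual multipliers $y^{*}$), so $\overline{\bS}\x=\bz$ becomes the eigenvector-type relation $\Y\x=\tfrac14\L_G\x$.

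Next I would read this relation off coordinate by coordinate. For a vertex $i$, diagonality of $\Y$ gives $(\Y\x)_i=Y_{ii}x_i$, while the definition of the Laplacian gives $(\L_G\x)_i=\deg_G(i)\,x_i-\sum_{j:(i,j)\in E}x_j$. Multiplying the resulting scalar equation through by $x_i$ and using $x_i^2=1$ produces
\[
Y_{ii}=\tfrac14\Bigl(\deg_G(i)-\sum_{j:(i,j)\in E}x_ix_j\Bigr).
\]
Splitting the sum over neighbours of $i$ according to the sign of $x_ix_j$: the $\deg_G(i)-\delta_i$ non-crossing neighbours contribute $+1$ each and the $\delta_i$ crossing ones contribute $-1$ each, so $\sum_{j:(i,j)\in E}x_ix_j=\deg_G(i)-2\delta_i$, and substituting gives $Y_{ii}=\delta_i/2$.

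For the final inequality I would invoke dual feasibility alone: $\overline{\bS}\succeq\bz$ forces each diagonal entry to be nonnegative, and $\overline{\bS}_{ii}=Y_{ii}-\tfrac14(\L_G)_{ii}=\tfrac{\delta_i}{2}-\tfrac{\deg_G(i)}{4}\ge 0$ is precisely $\delta_i\ge\tfrac{\deg_G(i)}{2}$. (Equivalently, this is the local-optimality condition at $i$ for the maximum cut induced by $\x$, which is guaranteed since $G$ is exact; but the semidefiniteness argument needs nothing beyond feasibility of $\overline{\bS}$.)

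There is no genuine obstacle here; the argument is a short calculation once complementary slackness is in hand. The only point requiring a word of justification is that $\Y=\overline{\bS}+\tfrac14\L_G$ is diagonal, and this is immediate from the form of $\mathrm{MAXCUT\ DUAL\ SDP}$, whose variable ranges over diagonal matrices, so that the optimal slack $\overline{\bS}$ has the form $\Diag(y^{*})-\tfrac14\L_G$.
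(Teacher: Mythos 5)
Your proposal is correct and follows essentially the same route as the paper's proof: derive $\overline{\bS}\x=\bz$ from complementary slackness (the paper argues via $\x^{\top}\overline{\bS}\x=0$ with $\overline{\bS}\succeq\bz$, you via $\overline{\X}\,\overline{\bS}=\bz$, which are equivalent), then read off $\Y\x=\tfrac14\L_G\x$ coordinatewise using diagonality of $\Y$, and finish with $\overline{S}_{ii}\ge 0$ for the degree bound. No gaps; the two arguments coincide in all essentials.
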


\begin{proof}
Since $G$ is exact, $\overline{\X} = \x\x^{\top}$ for some cut vector $\x \in \{\pm 1\}^n$ attaining the maximum cut. 
Optimality implies $\overline{\bS} \succeq \bz$ and $\x^{\top}\overline{\bS}\x = 0$, hence $\overline{\bS}\x = \mathbf{0}$. 

From $\overline{\bS} = \Y - \tfrac{1}{4}\L_G$, we obtain $\Y\x = \tfrac{1}{4}\L_G\x$, and since $\Y$ is diagonal,
\[
Y_{ii}x_i = \dfrac{1}{4}\!\left(\deg_G(i) x_i - \sum_{ij \in E} x_j\right)
\quad\Rightarrow\quad
Y_{ii} = \dfrac{1}{4}\!\left(\deg_G(i) - \sum_{ij \in E} x_i x_j\right).
\]

Let $\delta_i^c := \deg_G(i) - \delta_i$. Substituting $\deg_G(i) = \delta_i + \delta_i^c$ and $\displaystyle \sum_{ij \in E} x_i x_j = \delta_i^c - \delta_i$ yields
\[
Y_{ii}
= \dfrac{1}{4}\bigl((\delta_i + \delta_i^c) - (\delta_i^c - \delta_i)\bigr)
= \dfrac{1}{4}\bigl(\delta_i + \delta_i^c + \delta_i - \delta_i^c\bigr)
= \frac{\delta_i}{2}.
\]

Finally, since $\overline{\bS} = \Y - \tfrac{1}{4}\L_G \succeq \bz$, we have
\[
S_{ii} = \frac{\delta_i}{2} - \frac{\deg_G(i)}{4} \ge 0,
\]
which implies $\delta_i \ge \dfrac{\deg_G(i)}{2}$.
\end{proof}

We are now ready to prove \Cref{thm:1}.

\begin{proof}

Identify each vertex of \( G_1 \bullet G_2 \) as \( x_{ij} \), where \( i \in V_1 \) and \( j \in V_2 \).
Assume that \( G_1 \) is connected; otherwise, the argument extends directly to each connected component.

Let \((\overline{\X}, \overline{\bS})\) denote an optimal primal–dual pair for the exact graph \( G_1 \); and let \((V_A, V_B)\) be the corresponding partition attaining \(\mc(G_1)\).
Define the vector \(\hat{\x} \in \{\pm 1\}^{|V_1||V_2|}\) by
\[
\hat{x}_{ij} =
\begin{cases}
+1, & i \in V_A, \\[3pt]
-1, & i \in V_B,
\end{cases}
\qquad \text{for all } j \in V_2,
\]
and set \(\hat{\X} = \hat{\x}\hat{\x}^\top.\)
We claim that \(\hat{\X}\) is an optimal primal solution for \( G_1 \bullet G_2 \).

Since each copy of \( G_2 \) associated with vertex \( i \) inherits the same sign as its parent vertex in \( G_1 \), 
the edges within every \( G_2 \) do not contribute to the cut under \(\hat{\x}\), and the edges between different copies follow the cut induced by \((V_A, V_B)\).
Hence, the total cut value scales by \( n^2 \), where \( n = |V_2| \), giving
\[
z_{\mathrm{MAXCUT}}(G_1 \bullet G_2) = n^2 \, \mc(G_1).
\]
    
    Let $\overline{\Y} = \overline{\bS} + \frac{1}{4} \L_{G_1}$. Take $\hat{\Y} = \overline{\Y} \otimes n \I_n$. We claim that $\hat{\bS} = \hat{\Y} - \frac{1}{4}\L_{G_1 \bullet G_2}$ is the dual optimal solution. Note that $\tr(\hat{\Y}) = n^2 \tr(\overline{\Y}) = n^2 \mc(G_1)$. We reformulate $\hat{\bS}$ using \Cref{lem:1}.
\begin{align*}
    \hat{\bS} &= \overline{\Y} \otimes (\L_{K_n} + \1_{n \times n})  
    - \frac{1}{4} \left(\mathbf{D}_{G_1} \otimes (\L_{K_n} + \1_{n \times n})  
    + \I_m \otimes (\L_{G_2} - \mathbf{D}_{G_2})\right. \notag \\
    &\left.\quad + (\L_{G_1} - \mathbf{D}_{G_1}) \otimes \1_{n \times n}  
    + \I_m \otimes \mathbf{D}_{G_2}  
    \right) \\
    &= \left(\overline{\Y} - \frac{1}{4}\mathbf{D}_{G_1} \right) \otimes \L_{K_n}  
    + \left(\overline{\Y} - \frac{1}{4} \L_{G_1}\right) \otimes \1_{n \times n}  
     - \frac{1}{4} \I_m \otimes (\L_{K_n} - \L_{{G_2}^c})\\
    &= \underbrace{\left(\overline{\Y} - \frac{1}{4}\mathbf{D}_{G_1}-\frac{1}{4}\I_m\right) \otimes \L_{K_n}}_{\Y_1}  
    + \underbrace{\left(\overline{\Y} - \frac{1}{4} \L_{G_1}\right) \otimes \1_{n \times n}}_{\Y_2}  
    + \underbrace{\frac{1}{4} \I_m \otimes \L_{{G_2}^c}}_{\Y_3}.
\end{align*}

where the penultimate equality follows since, for any graph \(G\) on \(n\) vertices, 
\(\L(G) + \L(G^c) = \L(K_n)\). Since $\overline{\Y}$ is optimal for $G_1$, $\lambda_{\min}(\Y_2) = 0$ and it is clear that $\lambda_{\min}(\Y_3) = 0$. Referring to \Cref{lem:2}, note that for every vertex $i$ in an exact graph without isolated vertices $G$, $\Y_{ii} - \frac{1}{4} \mathbf{D_G}_{ii} \geq \frac{1}{4}$. It follows that $\lambda_{\min}(\Y_1) = 0$ and therefore $\hat{\bS} \succeq \bz$.

Given any rank-$r$ optimal (exact) solution $\X$ for $G_{1}$, we lift it to the product by setting $\widehat{\X}=\X\otimes \1_{n \times n}$. Since $\mathrm{rank}(\1_{n \times n})=1$, the rank is preserved: $\mathrm{rank}(\widehat{\X})=\mathrm{rank}(\X)=r$. We have $z_{\mc} = n^2 \mc(G_1)$ for $\hat{\X}$. The dual solution remains the same as $\hat{\mathbf{S}}$ earlier. The result follows.
\end{proof}

\subsection{Proof of \Cref{thm:3}}\label{counterrank}

The next sequence of additional notation and results, inspired by the framework of Delorme--Poljak~(\cite[Theorem~3.2]{delorme1993combinatorial}), develops the machinery to explicitly construct the required matrix $\X_G$.

For each \(i=1,\dots,n\) define the vectors \(\bu^i\in\mathbf R^n\) by
\[
u^i_j :=
\begin{cases}
1, & j\ne i\\[6pt]
-\,\dfrac{M-m_i}{m_i}, & j=i
\end{cases}
\quad,
\]
where $M = \sum_{i=1}^n m_i$. Write \( \tilde \bu^i := (\bu^i)^{\circ2}\) for the entry-wise square of \(u^i\).

\begin{lemma}\label{lem2}
For $\bm$ non-dominating, there exists \(\mathbf{d'}=(d'_1,\dots,d'_n)^\top>0\) such that
\[
\sum_{i=1}^n d'_i\, \tilde \bu^i \;=\; \1_n.
\]
\end{lemma}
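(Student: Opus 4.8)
The plan is to solve the linear system \(\sum_{i=1}^n d'_i\,\tilde\bu^i = \1_n\) explicitly and then verify that the resulting solution is positive. First I would read off coordinates. Since \(\tilde u^i_j = 1\) for \(j \ne i\) and \(\tilde u^i_i = (M-m_i)^2/m_i^2\), the \(j\)-th coordinate of the system is
\[
\sum_{i \ne j} d'_i \;+\; d'_j\,\frac{(M-m_j)^2}{m_j^2} \;=\; 1 .
\]
Introducing \(D := \sum_{i=1}^n d'_i\) and using \(\sum_{i\ne j} d'_i = D - d'_j\), this rearranges to
\[
D \;+\; d'_j\!\left(\frac{(M-m_j)^2 - m_j^2}{m_j^2}\right) = 1 .
\]
The algebraic observation that drives everything is the factorization \((M-m_j)^2 - m_j^2 = M\,(M-2m_j)\), which converts the display into \(D + d'_j\,\frac{M(M-2m_j)}{m_j^2} = 1\).

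Next I would solve for the \(d'_j\). The non-dominating hypothesis is precisely the statement \(m_j < \sum_{k\ne j} m_k\), i.e.\ \(M - 2m_j > 0\) for every \(j\); hence the coefficient of \(d'_j\) above is strictly positive and
\[
d'_j \;=\; \frac{(1-D)\,m_j^2}{M\,(M-2m_j)} .
\]
It then remains to fix \(D\) self-consistently. Summing over \(j\) and setting \(C := \tfrac{1}{M}\sum_{j=1}^n \tfrac{m_j^2}{M-2m_j}\), which is strictly positive, the relation \(D = \sum_j d'_j\) gives \(D = (1-D)\,C\), so that
\[
D = \frac{C}{1+C}\in(0,1),\qquad 1-D = \frac{1}{1+C} > 0 .
\]
Substituting back, \(d'_j = \frac{m_j^2}{M\,(M-2m_j)\,(1+C)} > 0\) for every \(j\), giving the desired vector \(\mathbf d' > 0\); and by construction this \(\mathbf d'\) satisfies each coordinate of \(\sum_i d'_i\,\tilde\bu^i = \1_n\), which completes the argument.

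I do not expect a genuine obstacle here: once the factorization \((M-m_j)^2 - m_j^2 = M(M-2m_j)\) is spotted, the system is forced, and the only two positivity facts needed — \(M - 2m_j > 0\) and \(1 - D > 0\) — are both delivered by the non-dominating assumption (the latter because it makes \(C\) positive and finite). The one point worth stating carefully is that the coordinate equations are genuinely consistent with the auxiliary unknown \(D = \sum_i d'_i\), which is exactly what the computation \(D = C/(1+C)\) confirms.
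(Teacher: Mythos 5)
Your proposal is correct and is essentially the paper's own argument: the paper writes $\tilde u^i_i = 1+\epsilon_i$ with $\epsilon_i>0$ and sets $d'_i = (1-t)/\epsilon_i$ so that each coordinate equation reads $\sum_k d'_k + d'_j\epsilon_j = 1$, which is exactly your system $D + d'_j\,\frac{M(M-2m_j)}{m_j^2}=1$ since $\epsilon_j = \frac{(M-m_j)^2-m_j^2}{m_j^2} = \frac{M(M-2m_j)}{m_j^2}$, and your self-consistent choice of $D$ reproduces the same $\mathbf d'$. The only cosmetic difference is that you derive the solution explicitly (making the positivity of $M-2m_j$ visible), while the paper states the formula and verifies it.
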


\begin{proof}

If $\mathbf{m}$ is non-dominating, then $(u^i)^{\circ2}_j = 1$ for $j\ne i$ and $(u^i)^{\circ2}_i = \left(\tfrac{M-m_i}{m_i}\right)^2 = 1+\epsilon_i$ with $\epsilon_i>0$. Writing $\tilde u^i_i = 1+\epsilon_i$, define
\[
t := \frac{\sum_{i} \epsilon_i^{-1}}{1 + \sum_{k=1}^n \epsilon_k^{-1}},
\qquad
d'_i := \frac{1-t}{\epsilon_i}.
\]
Then $d'_i>0$, and for each coordinate $j \in [n]$,
\[
\sum_{i=1}^n d'_i \tilde u^i_j
= \sum_{i\ne j} d'_i + d'_j(1+\epsilon_j)
= \left(\sum_{i=1}^n d'_i\right) + d'_j\epsilon_j = 1,
\]
by the construction of $t$ and $d'_i$. Hence $\mathbf{d'}> \bz$ satisfies the claim.
\end{proof}

\begin{lemma}\label{lem3}
The vectors \(\bu^1,\dots,\bu^n\) span \(\bm^{\perp}\). Equivalently the \(n\times n\) matrix \(\mathbf{U}=[\bu^1\ \cdots\ \bu^n]\) has \(\operatorname{rank}(\mathbf{U})=n-1\).
\end{lemma}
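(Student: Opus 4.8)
The plan is to exploit the explicit form of $\mathbf{U}$ as a rank-one perturbation of an invertible diagonal matrix, thereby pinning down its rank exactly. First I would record the orthogonality: for each $i$,
\[
\langle \bm, \bu^i\rangle \;=\; \sum_{j\ne i} m_j \;-\; \frac{M-m_i}{m_i}\,m_i \;=\; (M-m_i)-(M-m_i)\;=\;0 ,
\]
so every column of $\mathbf{U}$ lies in $\bm^{\perp}$, giving $\range(\mathbf{U})\subseteq \bm^{\perp}$ and hence $\rank(\mathbf{U})\le n-1$.

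For the matching lower bound I would rewrite $\mathbf{U}$ entrywise. Since the $(j,i)$ entry of $\mathbf{U}$ equals $1$ for $j\ne i$ and $-\tfrac{M-m_i}{m_i}$ for $j=i$, subtracting $\mathbf{U}$ from $\1_{n\times n}$ leaves a diagonal matrix, and using $1+\tfrac{M-m_i}{m_i}=\tfrac{M}{m_i}$ one gets
\[
\mathbf{U} \;=\; \1_{n\times n} \;-\; \Diag\!\left(\tfrac{M}{m_1},\dots,\tfrac{M}{m_n}\right) \;=\; \1_n\1_n^{\top} \;-\; M\,\Diag(\bm)^{-1}.
\]
Because every $m_i>0$ and $M>0$, the matrix $M\,\Diag(\bm)^{-1}$ is invertible, so $\rank\!\bigl(-M\,\Diag(\bm)^{-1}\bigr)=n$; adding the rank-one matrix $\1_n\1_n^{\top}$ can decrease the rank by at most one, whence $\rank(\mathbf{U})\ge n-1$. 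Combining the two bounds yields $\rank(\mathbf{U})=n-1$, and since the $(n-1)$-dimensional column space of $\mathbf{U}$ is contained in the $(n-1)$-dimensional subspace $\bm^{\perp}$, the two coincide; thus $\bu^1,\dots,\bu^n$ span $\bm^{\perp}$.

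I do not anticipate a genuine obstacle here, as the argument is elementary linear algebra; the only point requiring a little care is the order of the steps. One must first establish the inclusion $\range(\mathbf{U})\subseteq\bm^{\perp}$ (the one-line orthogonality computation above) and only then invoke the rank-one perturbation bound, since equality of the spans follows from the matching dimension \emph{together with} this inclusion, not from a dimension count alone. (If one prefers an explicit rank drop, $\det \mathbf{U}=0$ can be obtained from the matrix determinant lemma using $\1_n^{\top}\Diag(\bm)\1_n = M$, but the inclusion into $\bm^{\perp}$ makes this step unnecessary; also note this lemma, unlike \Cref{lem2}, does not require $\bm$ to be non-dominating.)
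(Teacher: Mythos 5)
Your proof is correct, and its second half takes a different route from the paper's. Both arguments open with the same orthogonality computation $\bm^{\top}\bu^i=0$, giving $\range(\mathbf{U})\subseteq\bm^{\perp}$ and hence $\rank(\mathbf{U})\le n-1$. For the lower bound, the paper solves the linear system directly: it takes any $\mathbf{z}$ with $\mathbf{z}^{\top}\bu^i=0$ for all $i$, derives $z_i=\tfrac{m_i}{M}\sum_j z_j$, and concludes that the left nullspace of $\mathbf{U}$ is exactly $\operatorname{span}\{\bm\}$, which yields $\rank(\mathbf{U})=n-1$ and the span statement in one stroke. You instead observe the structural identity $\mathbf{U}=\1_n\1_n^{\top}-M\,\Diag(\bm)^{-1}$ (which checks out entrywise, since $1-\tfrac{M}{m_i}=-\tfrac{M-m_i}{m_i}$) and invoke rank subadditivity: a rank-one perturbation of a nonsingular matrix has rank at least $n-1$. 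Your closing dimension-count argument correctly turns the inclusion plus matching dimensions into equality of the spans, and you are right to insist that the inclusion must be established first. What each approach buys: the paper's computation explicitly exhibits the annihilator of the columns as $\operatorname{span}\{\bm\}$ (a fact reused in spirit in the proof of \Cref{thm:3}, where $\X_G\bm=\bz$ is needed), while your decomposition is more compact, makes the "rank drops by exactly one" phenomenon transparent (your determinant-lemma aside, using $\1_n^{\top}\Diag(\bm)\1_n=M$, confirms it), and correctly notes that no non-dominating hypothesis is needed here, in agreement with the paper's statement of the lemma.
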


\begin{proof}

For each $i \in [n]$ we have $\bu^i \in \bm^{\perp}$, since
\[
\bm^{\top} \bu^i = \sum_{j\ne i} m_j + m_i\left(-\frac{M-m_i}{m_i}\right) = (M-m_i) - (M-m_i)=0.
\]
Suppose \(\mathbf{z}\in\mathbb R^n\) satisfies \(\mathbf{z}^\top \bu^i=0\) for all \(i\). Let \(Z:=\sum_{j=1}^n z_j\). For fixed \(i\),
\[
0 = \mathbf{z}^\top \bu^i = \sum_{j\ne i} z_j + z_i\left(-\frac{M-m_i}{m_i}\right)
= Z - \frac{M}{m_i} z_i.
\]
Hence \(z_i = \dfrac{m_i}{M}Z\) for every \(i \in [n]\), so \(\mathbf z=(Z/M)\bm\). Thus the left nullspace of \(\mathbf{U}\) equals \(\operatorname{span}\{\bm\}\) (one-dimensional), and \(\operatorname{rank}(\mathbf{U})=n-1\). Therefore \(\operatorname{span}\{\bu^i\}=\bm^\perp\).
\end{proof}

\Cref{lem2} and \Cref{lem3} show that the vectors $\{\bu^i\}$ span $\bm^\perp$. These vectors are utilized to explicitly construct the rank-$(n-1)$ optimal solution required in the proof of \Cref{thm:3}, as follows.

\begin{proof}
Construct vectors $\bu^i\in \bm^\perp, i \in [n]$ as before, with $u^i_j=1$ for $j\neq i$ and $u^i_i=-(M-m_i)/m_i$. By \Cref{lem2}, there exist strictly positive weights $d'_i, i \in [n]$ such that $\sum_i d'_i (\bu^i)^{\circ2}=\1_n$. Define
\[
\X_G := \sum_{i=1}^n d'_i\, \bu^i (\bu^i)^\top.
\]
It immediately follows that, $\X_G \succeq \bz$  being the sum of positive semidefinite matrices and satisfies $\operatorname{diag}(\X_G)=\1_n$, and is therefore feasible for the $\mathrm{MAXCUT\ SDP}$. From \Cref{lem3}, the vectors $\{\bu^i\}$ span $\bm^\perp$, hence the range of $\X_G$ is $\bm^\perp$ and $\operatorname{rank}(\X_G)=n-1$.

Optimality is certified by the dual solution $\Y = \tfrac M4 \operatorname{Diag}(\bm)$. The Laplacian satisfies 
\begin{equation*}
\L_{G} = M\operatorname{Diag}(\bm) - \bm \bm{^\top},  
\end{equation*}
whose entries are $(\L_{G})_{ii} = m_i(M - m_i)$ and $(\L_{G})_{ij} = -m_i m_j$ for $i \neq j$, implying $\bS = \Y - \tfrac14 \L_{G} = \tfrac14 \bm \bm{^\top} \succeq \bz$. Complementary slackness holds because $\langle \X_G, \bS \rangle = 0$, as $\X_G \bm = \bz$, and the objective value is $\tr(\Y) = M^2 / 4$. Therefore, $\X_G$ is an optimal rank-$(n-1)$ solution with objective value $\tfrac14 M^2$.
\end{proof}

\subsubsection{Proof of \Cref{convex_hull}}
The claim follows immediately from the existence of an optimal solution of rank $n-1$ and the fact that any convex combination of $n-2$ rank-1 solutions has rank at most $n-2$.\hfill \myQED

\subsubsection{Verification of \Cref{counter1} and \Cref{counter2}}\label{verification}
For \Cref{counter1}, take the four vectors
\[
\bu^1 =(-\tfrac{11}{5},\,1,1,1)^{\top}, 
\bu^2 =(1,\,-\tfrac{13}{3},\,1,1)^{\top}, 
\bu^3 =(1,1,\,-3,\,1)^{\top},
\bu^4 =(1,1,1,\,-3)^{\top}.
\]
Applying the construction in \Cref{lem2} gives

\[
d_1 = \tfrac{125}{752}, \qquad
d_2 = \tfrac{27}{752}, \qquad
d_3 = d_4 = \tfrac{15}{188}.
\]
Writing \(\X_{G_1} = \sum_{i=1}^4 d_i\, \bu^i (\bu^i)^\top\), we obtain
\[
\X_{G_1} =
\begin{bmatrix}
1 & -\tfrac{17}{47} & -\tfrac{23}{47} & -\tfrac{23}{47} \\[2mm]
-\tfrac{17}{47} & 1 & -\tfrac{7}{47} & -\tfrac{7}{47} \\[2mm]
-\tfrac{23}{47} & -\tfrac{7}{47} & 1 & -\tfrac{13}{47} \\[2mm]
-\tfrac{23}{47} & -\tfrac{7}{47} & -\tfrac{13}{47} & 1
\end{bmatrix}.
\]
If \(\X^* = \x \x^\top\) is the rank-1 optimal solution corresponding to \(\x = (1,1,-1,-1)^{\top}\), then
\(\hat \bZ^*\) lies in the convex combination of \(\X^*\) and \(\X_{G_1}\), given by
\[
\hat \bZ^* = \tfrac{1}{48} \X^* + \tfrac{47}{48} \X_{G_1}.
\]

\bigskip

For \Cref{counter2}, take the four vectors
\[
\bu^1 = (-5,\,1,1,1)^{\top}, \;
\bu^2 = (1,\,-5,\,1,1)^{\top}, \;
\bu^3 = (1,1,\,-2,1)^{\top}, \;
\bu^4 = (1,1,1,\,-2)^{\top}.
\]
Applying the construction in \Cref{lem2} (with \(\tilde u^i_j = (u^i_j)^2\)) gives

\[
d_1 = d_2 =  \frac{1}{42}, 
\qquad
d_3 = d_4 =  \frac{4}{21}.
\]
Writing \(\X_{G_2} = \sum_{i=1}^4 d_i\, \bu^i (\bu^i)^\top\), we obtain the rational matrix
\[
\X_{G_2} =
\begin{bmatrix}
1 & \tfrac{1}{7} & -\tfrac{2}{7} & -\tfrac{2}{7} \\[2mm]
\tfrac{1}{7} & 1 & -\tfrac{2}{7} & -\tfrac{2}{7} \\[2mm]
-\tfrac{2}{7} & -\tfrac{2}{7} & 1 & -\tfrac{5}{7} \\[2mm]
-\tfrac{2}{7} & -\tfrac{2}{7} & -\tfrac{5}{7} & 1
\end{bmatrix}.
\]
If \(\X^* = \x_1 \x_1^\top\) and \(\Y^* = \x_2 \x_2^\top\) are the rank-1 solutions corresponding to 
\(\x_1 = (1,-1,-1,1)^{\top}\) and \(\x_2 = (1,-1,1,-1)^{\top}\), then the matrix \(\hat \bZ\) admits the convex decomposition
\[
\hat \bZ = \tfrac{3}{20} \X^* + \tfrac{3}{20} \Y^* + \tfrac{7}{10} \X_{G_2},
\]
as claimed. Observe that $\X_{G_2}$ in this case, itself lies outside the convex hull of rank-1 optima.

\subsection{Proof of \Cref{lem6}}

This result follows by combining earlier results of Delorme and Poljak
\cite[Lemma~2.1]{delorme1993performance} and \cite[Theorem~2.7]{delorme1993combinatorial}.
Since the proof of \cite[Lemma~2.1]{delorme1993performance} was omitted in their paper, we provide the missing argument here.

\begin{proof}
Let \( G \) be exact, i.e., \( \phi(G) = \mc(G) \). By the graph vertex-split invariance and upper property of \( \phi \), we have $\phi(G) =\phi(\widetilde{G}) \geq \mc(\widetilde{G}) $. Moreover, every cut \( (S, S^c) \) of \( G \) naturally induces a cut
\( (\widetilde{S}, \widetilde{S}^c) \) of \( \widetilde{G} \), obtained by assigning all
vertices arising from the split of each vertex \( v \in S \) (respectively,
\( v \in S^c \)) to \( \widetilde{S} \) (respectively, \( \widetilde{S}^c \)). This shows that $\mc(\widetilde{G}) \ge \mc(G)$, and therefore 
\( \mc(G) = \phi(\widetilde{G}) \geq \mc(\widetilde{G}) \geq \mc(G) \).

For the converse, suppose that two vertices \( v', v'' \) of the vertex-split graph
\( \widetilde{G} \), originating from the same vertex \( v \in V(G) \),
lie on opposite sides of a maximum cut of \( \widetilde{G} \).
We claim that one can move either \( v' \) or \( v'' \) to the other side
without decreasing the cut value.

Let \( v' \) be incident to \( n \) vertices on its own side of the cut
and to \( m \) vertices on the opposite side.
Then \( v'' \) is incident to \( m \) vertices on its own side and to
\( n \) vertices on the opposite side, since \( v' \) and \( v'' \) have
identical neighborhoods.
If \( n \ge m \), moving \( v' \) to the side containing \( v'' \) does not
decrease the cut value; and vice-versa.

Repeating this process, we may assume that all vertices arising from the
vertex-split of each \( v \in V(G) \) lie on the same side of the cut, which
induces a cut of \( G \).
Hence, $\mc(\widetilde{G}) \le \mc(G)$, and the proof follows.
\end{proof}

\subsection{Proof of \Cref{cor:complete}}

In this section, we establish \Cref{splitlemma} to formalize how the primal solution rank and objective value behave under the vertex-splitting operation. We then leverage this lemma to prove \Cref{cor:complete}. Finally, we deploy this complete $k$-partite framework to resolve the necessity side of the argument in the uniqueness proof of \Cref{prop1}. 

\subsubsection{Proof of \Cref{splitlemma}}

Let $\X^*$ be an optimal rank-$r$ solution of the original graph $G(V,E)$ and $|V| = n$. 
Since $\X^* \succeq 0$, it admits a factorization $\X^* = \mathbf{V}\mathbf{V}^\top$, where  
$\mathbf{V}^{\top} = [\,\bv_1, \ldots, \bv_n\,]\in \mathbb{R}^{r \times n}$.

We construct a feasible matrix $\widetilde\X$ for the vertex-split graph $\widetilde G$ as follows. 
For each vertex $u \in V$ having $m_u$ copies/clones $u^{(1)}, \dots, u^{(m_u)}$ in $\widetilde G$, 
we assign $\widetilde\bv_{u^{(i)}} := \bv_u$ for all $i \in [m_u]$ and define 
$\widetilde X_{ab} := \widetilde\bv_a^{\top}\widetilde\bv_b$ for all $a,b \in V(\widetilde G)$. 
Clearly, $\widetilde\X \succeq \bz$, and since the span of the vectors 
$\{\widetilde\bv_a : a \in V(\widetilde G)\}$ coincides with that of $\{\bv_u : u \in V\}$, 
the rank is preserved, i.e., $\operatorname{rank}(\widetilde\X) = r$.

Next, we verify that the objective value is also preserved. 
For the diagonal terms in $\langle \L_G, \X^* \rangle$, 
let the total incident weight at vertex $u$ be $\sum_{v} w_{uv}$. 
In $\widetilde G$, each clone $u^{(i)}$ has total incident weight 
$\left(\sum_{v} w_{uv}\right) / m_u$. 
Hence, the total diagonal contribution of all clones of $u$ equals
\[
\sum_{i=1}^{m_u} 
\left(\dfrac{\sum_{v} w_{uv}}{m_u}\right)\, \widetilde X_{u^{(i)}u^{(i)}}
\;=\;
\left(\sum_{v} w_{uv}\right) X^*_{uu},
\]
matching the corresponding diagonal term in $\langle \L_G, \X^* \rangle$. 

For the off-diagonal terms, the weight $w_{uv}$ of each edge $(u,v) \in E$ 
is evenly distributed among the $m_u m_v$ clone--clone pairs 
$(u^{(i)}, v^{(j)})$ in $\widetilde G$. 
Each pair contributes $(w_{uv}/m_u m_v)\widetilde X_{u^{(i)}v^{(j)}} 
= (w_{uv}/m_u m_v) X^*_{uv}$, 
and summing over all $i \in [m_u]$ and $j \in [m_v]$ gives $w_{uv} X^*_{uv}$. 
Hence, the total contribution of all clone pairs exactly reproduces 
the off-diagonal part of the original inner product.

Combining both parts, we obtain 
$\langle \L_{\widetilde G}, \widetilde\X \rangle 
= \langle \L_G, \X^* \rangle$. 
Finally, since the SDP optimum is preserved under vertex-splitting, i.e. 
$\phi(\widetilde G) = \phi(G)$, and $\widetilde\X$ attains this value, $\widetilde\X$ is a rank-$r$ optimal solution for $\widetilde G$. \hfill \myQED

\subsubsection{Proof of the Main Statement (\Cref{cor:complete})}

We begin by establishing that the exactness of the complete $n$-partite graph framework can be verified through two distinct, equivalent routes:
\begin{enumerate}
    \item The balanced non-dominating case and the dominating case can both be viewed as special cases of \Cref{roc}.
    \item Alternatively, exactness follows from \Cref{lem6} and the observation that a complete $n$-partite graph can be viewed as the vertex-split graph of the exact weighted complete graphs identified by Laurent-Poljak \cite{laurent1995positive} (see \Cref{knowngraphs}).
\end{enumerate}

The respective optimal Max-Cut objective values can be directly verified from \Cref{proof1} and \Cref{proof2}.
For the uniqueness argument, suppose that uniqueness of the rank-$1$ optimal solution holds for the complete $n$-partite graph in the balanced non-dominating case. Then, by \Cref{thm:3} and \Cref{splitlemma}, the corresponding $\mathrm{MAXCUT\ SDP}$ admits an optimal solution of rank $n-1$, which is a contradiction. Hence, uniqueness can hold only in the dominating case. The sufficiency of the dominating condition follows immediately from \Cref{roc}. \hfill \myQED

\subsubsection{Proof of \Cref{prop1} continued}\label{onlyif}

Suppose neither $G_A^c$ nor $G_B^c$ is connected. Let the connected components of $G_A^c$ partition $V_A$ into $k \ge 2$ sets with sizes $a_1, a_2, \dots, a_k$, and let the connected components of $G_B^c$ partition $V_B$ into $l \ge 2$ sets with sizes $b_1, b_2, \dots, b_l$. 

The union graph $G'$ necessarily contains the complete $(k+l)$-partite graph $S' = K(a_1, \dots, a_k, b_1, \dots, b_l)$ as a spanning subgraph. Let $S$ denote the index set corresponding to the partitions of $V_A$, and its complement $S^c$ denote the index set corresponding to the partitions of $V_B$. Since $\sum_{i \in S} a_i = |V_A| = n$ and $\sum_{j \in S^c} b_j = |V_B| = n$, it follows that:
\[
\sum_{i \in S} a_i = \sum_{j \in S^c} b_j = n.
\]

This precisely satisfies the condition for the balanced non-dominating case \Cref{cor:complete}. Thus, the spanning subgraph $S'$ admits a higher-rank optimal primal solution $X^*$ with an optimal objective value of $\left(\sum_{i \in S} a_i\right)^2 = n^2$. To see that $X^*$ remains optimal for the full graph $G'$, recall from \Cref{proof1} that dual feasibility is preserved with an optimal dual objective value of $n^2$ even when the remaining internal edges of $G_A$ and $G_B$ are included. Since a higher-rank $X^*$ is primal feasible for $G'$ and achieves an objective value of $n^2$ matching the dual bound, the same primal solution $X^*$ is optimal for $G'$. Hence, the optimal SDP solution is not unique. \hfill \myQED

\subsection{Proof of \Cref{thm:splitexact}} 
\label{proof:prop5}
Let $\mathcal{S}'$ denote the uniform vertex-split of $\mathcal{S}$ with $p$ copies for each vertex. Since $\mathcal{S}$ has uniform edge weights $w$, the resulting graph $\mathcal{S'}$ is uniformly weighted with edge weights $w/p^{2}$. By Lemma~\ref{lem6}, the exactness of $\mathcal{S}$ implies the exactness of $\mathcal{S}'$, and moreover we obtain 
\begin{equation*}
 \phi(\mathcal{S}') \;=\; \phi(\mathcal S) \;=\; \mc(\mathcal{S}) \;=\; w \cdot \mc(\mathcal{S}_{\mathrm{uw}})   
\end{equation*}
where $\mc(\mathcal{S}_{\mathrm{uw}})$ denotes the maximum cut value of the unweighted copy of $\mathcal{S}$.

Let $\mathcal{S}_{\tfrac{w}{p^2}}$ be the graph such that
\[
V(\mathcal{S}_{\tfrac{w}{p^2}}) = V(\mathcal{S}), 
\qquad 
E(\mathcal{S}_{\tfrac{w}{p^2}}) = E(\mathcal{S}), 
\qquad
w_{ij}(\mathcal{S}_{\tfrac{w}{p^2}}) = \tfrac{w}{p^2}, \ \forall (i,j) \in E(\mathcal{S}).
\]
Similarly, let $K_{p,\tfrac{w}{p^2}}$ be the complete graph on $p$ vertices with all edges uniformly weighted by $\tfrac{w}{p^2}$. Now, consider the lexicographic product $\mathcal{S}_{\tfrac{w}{p^2}} \bullet K_{p,\tfrac{w}{p^2}}$. Scaling both $G_1$ and $G_2$ by any constant weight $\lambda$ scales the Laplacian of their lexicographic product by the same factor $\lambda$, which can be cross-verified using Lemma~\ref{lem:1}. By Theorem~\ref{thm:1}, this product is exact and satisfies 
\begin{equation*}
 \phi\left(\mathcal{S}_{\tfrac{w}{p^2}} \bullet K_{p,\tfrac{w}{p^2}}\right) \;=\; \frac{w}{p^2} \cdot p^{2} \mc(\mathcal{S}_{\mathrm{uw}}) \;=\; w \cdot \mc(\mathcal{S}_{\mathrm{uw}})   
\end{equation*}

Observe that $\mathcal{S}'$ is a spanning subgraph of $\mathcal{S}_{\tfrac{w}{p^2}} \bullet K_{p,\tfrac{w}{p^2}}$, while $\phi\left(\mathcal{S}_{\tfrac{w}{p^2}} \bullet K_{p,\tfrac{w}{p^2}}\right) = \phi(\mathcal{S}')$. Since $\widetilde{G}$ lies between these two extremes edge-wise, we can invoke the monotonicity property of $\phi$. It follows that $\widetilde{G}$ is also exact with 

\begin{equation*}
 \phi(\widetilde{G}) \;=\; \mc(\mathcal{S}) \;=\; w \cdot \mc(\mathcal{S}_{\mathrm{uw}}).  
\end{equation*}

From \Cref{lem6}, this implies that $G$ is exact. \hfill \myQED
\section{Concluding Remarks}\label{sec6} 

We investigated structural and algorithmic aspects of exactness of the Goemans--Williamson (GW) relaxation for the Max-Cut problem. We identified two unweighted graph families for which the GW-relaxation is exact:
\begin{enumerate}[label=\alph*)]
    \item graphs containing a balanced complete bipartite spanning subgraph, and 
    \item graphs admitting a partition with an adjacency condition and a degree constraint. 
\end{enumerate}
For the graph class (a), we established sufficient conditions under which the optimal SDP solution is unique, while for the latter class we showed that the optimal SDP solution is always unique. Furthermore, we presented an efficient method to recognize these classes in \Cref{sec4} and provided explicit constructions of their maximum cut in \Cref{sec5}. This raises broader queries concerning which additional structural conditions enable efficient certification of exactness and direct recovery of optimal cuts, and how prevalent such graph classes are within natural or application-driven graph families. Particularly, since most of our results concern unweighted or uniformly weighted graphs, it would be interesting to investigate further the conditions
under which semidefinite relaxations yield exact solutions for non-uniformly weighted instances. 

We further examined the behavior of exactness and optimal solution rank under a few graph operations: which includes graph join, vertex-splitting and lexicographic product. Using these insights, we introduced a class of graphs—termed \emph{split-decomposable graphs}—to demonstrate how large graphs may remain exact due to the presence of a small structural core that governs exactness. This framework also shows how the tools developed in this manuscript can be leveraged to identify exactness and compute the maximum cut in larger and more complex graphs. At the same time, the computational complexity of recognizing exact unweighted graphs in full generality remains open, suggesting the development of explicit decomposition and recognition algorithms as a direction for future research.

Finally, we examined the structure of optimal solutions in exact relaxations and uncovered intrinsic higher-rank phenomena. In addressing open problems posed by Mirka and Williamson~\citep{mirka2024max}, we showed that exactness of the GW-relaxation together with uniqueness of the maximum cut does not imply uniqueness of the rank-1 optimal SDP solution. Furthermore, we constructed families of graphs for which higher-rank optimal SDP solutions exist that cannot be expressed as convex combinations of rank-1 optima. Understanding what additional structural principles govern the existence of such higher-rank optima—particularly those lying outside the convex hull of cut solutions—offers insight into the extreme points of the elliptope and the convex hull of optimal solutions. Advancing these lines of inquiry could enhance the applicability of the theoretical framework and provide deeper insights into the structural properties that
govern exactness of semidefinite relaxation for the Max-Cut problem.

\bibliographystyle{cas-model2-names}
\bibliography{references.bib}

@article{delorme1993alaplacian,
  title={{Laplacian eigenvalues and the maximum cut problem}},
  author={Delorme, Charles and Poljak, Svatopluk},
  journal={Mathematical Programming},
  volume={62},
  pages={557--574},
  year={1993},
  publisher={Springer}
}

@article{poljak1995nonpolyhedral,
  title={{Nonpolyhedral relaxations of graph-bisection problems}},
  author={Poljak, Svatopluk and Rendl, Franz},
  journal={SIAM Journal on Optimization},
  volume={5},
  number={3},
  pages={467--487},
  year={1995},
  publisher={SIAM}
}

@article{delorme1993performance,
  title={{The performance of an eigenvalue bound on the max-cut problem in some classes of graphs}},
  author={Delorme, Charles and Poljak, Svatopluk},
  journal={Discrete Mathematics},
  volume={111},
  number={1-3},
  pages={145--156},
  year={1993},
  publisher={Elsevier}
}

@misc{MIT6006Lec18,
  author       = {Demaine, Erik and Ku, Jason and Solomon, Justin},
  title        = {Lecture 18: Pseudopolynomial},
  howpublished = {MIT OpenCourseWare, 6.006 Introduction to Algorithms},
  year         = {2020},
  note         = {Spring 2020 lecture notes, accessed 19 June 2026},
  url          = {https://ocw.mit.edu/courses/6-006-introduction-to-algorithms-spring-2020/mit6_006s20_lec18.pdf}
}

@article{moharpoljak1990eigenvalues,
  title={{Eigenvalues and the max-cut problem}},
  author={Mohar, Bojan and Poljak, Svatopluk},
  journal={Czechoslovak Mathematical Journal},
  volume={40},
  number={2},
  pages={343--352},
  year={1990},
  publisher={Institute of Mathematics, Academy of Sciences of the Czech Republic}
}

@article{delorme1993combinatorial,
  title={{Combinatorial properties and the complexity of a max-cut approximation}},
  author={Delorme, Charles and Poljak, Svatopluk},
  journal={European Journal of Combinatorics},
  volume={14},
  number={4},
  pages={313--333},
  year={1993},
  publisher={Elsevier}
}

@article{goemans1995improved,
  title={{Improved approximation algorithms for maximum cut and satisfiability problems using semidefinite programming}},
  author={Goemans, Michel X and Williamson, David P},
  journal={Journal of the ACM (JACM)},
  volume={42},
  number={6},
  pages={1115--1145},
  year={1995},
  publisher={ACM New York, NY, USA}
}

@article{gallier2020schur,
  title={{The schur complement and symmetric positive semidefinite (and definite) matrices (2019)}},
  author={Gallier, Jean and others},
  journal={URL https://www. cis. upenn. edu/jean/schur-comp. pdf},
  year={2020}
}

@article{dorbec2008power,
  title={{Power domination in product graphs}},
  author={Dorbec, Paul and Mollard, Michel and Klav{\v{z}}ar, Sandi and {\v{S}}pacapan, Simon},
  journal={SIAM journal on discrete mathematics},
  volume={22},
  number={2},
  pages={554--567},
  year={2008},
  publisher={SIAM}
}

@article{carlson1974generalization,
  title={{A generalization of the Schur complement by means of the Moore--Penrose inverse}},
  author={Carlson, David and Haynsworth, Emilie and Markham, Thomas},
  journal={SIAM Journal on Applied Mathematics},
  volume={26},
  number={1},
  pages={169--175},
  year={1974},
  publisher={SIAM}
}

@inproceedings{hong,
  title={{Optimal solutions and ranks in the max-cut SDP}},
  author={Hong, Daniel X and Lee, Hyunwoo and Wei, Alex},
  booktitle={2021 Joint Mathematics Meetings (JMM)},
  organization={AMS},
  year={2021}
}

@article{vanboyd,
  title={{Semidefinite programming}},
  author={Vandenberghe, Lieven and Boyd, Stephen},
  journal={SIAM review},
  volume={38},
  number={1},
  pages={49--95},
  year={1996},
  publisher={SIAM}
}

@article{sojoudi2014exactness,
  title={{Exactness of semidefinite relaxations for nonlinear optimization problems with underlying graph structure}},
  author={Sojoudi, Somayeh and Lavaei, Javad},
  journal={SIAM Journal on Optimization},
  volume={24},
  number={4},
  pages={1746--1778},
  year={2014},
  publisher={SIAM}
}

@article{barahona1988application,
  title={{An application of combinatorial optimization to statistical physics and circuit layout design}},
  author={Barahona, Francisco and Gr{\"o}tschel, Martin and J{\"u}nger, Michael and Reinelt, Gerhard},
  journal={Operations Research},
  volume={36},
  number={3},
  pages={493--513},
  year={1988},
  publisher={INFORMS}
}

@article{mirka2024max,
  title={{Max cut and semidefinite rank}},
  author={Mirka, Renee and Williamson, David P},
  journal={Operations Research Letters},
  volume={53},
  pages={107067},
  year={2024},
  publisher={Elsevier}
}

@inproceedings{karloff1996good,
  title={{How good is the Goemans-Williamson MAX CUT algorithm?}},
  author={Karloff, Howard},
  booktitle={Proceedings of the twenty-eighth annual ACM symposium on Theory of Computing},
  pages={427--434},
  year={1996}
}

@article{barik2015laplacian,
  title={{On the Laplacian spectra of product graphs}},
  author={Barik, Sasmita and Bapat, Ravindra B and Pati, Sukanta},
  journal={Applicable Analysis and Discrete Mathematics},
  pages={39--58},
  year={2015},
  publisher={JSTOR}
}

@article{khot2007optimal,
  title={{Optimal inapproximability results for MAX-CUT and other 2-variable CSPs?}},
  author={Khot, Subhash and Kindler, Guy and Mossel, Elchanan and O’Donnell, Ryan},
  journal={SIAM Journal on Computing},
  volume={37},
  number={1},
  pages={319--357},
  year={2007},
  publisher={SIAM}
}

@incollection{karp2009reducibility,
  title={{Reducibility among combinatorial problems}},
  author={Karp, Richard M},
  booktitle={50 Years of Integer Programming 1958-2008: from the Early Years to the State-of-the-Art},
  pages={219--241},
  year={2009},
  publisher={Springer}
}

@article{lovasz1979shannon,
  title={{On the Shannon capacity of a graph}},
  author={Lov{\'a}sz, L{\'a}szl{\'o}},
  journal={IEEE Transactions on Information theory},
  volume={25},
  number={1},
  pages={1--7},
  year={1979},
  publisher={IEEE}
}

@article{alon2000bipartite,
  title={{Bipartite subgraphs and the smallest eigenvalue}},
  author={Alon, Noga and Sudakov, Benny},
  journal={Combinatorics, Probability and Computing},
  volume={9},
  number={1},
  pages={1--12},
  year={2000},
  publisher={Cambridge University Press}
}

@article{alon2001constructing,
  title={{Constructing worst case instances for semidefinite programming based approximation algorithms}},
  author={Alon, Noga and Sudakov, Benny and Zwick, Uri},
  journal={SIAM Journal on Discrete Mathematics},
  volume={15},
  number={1},
  pages={58--72},
  year={2001},
  publisher={SIAM}
}

@article{de2019strict,
  title={{Strict complementarity in semidefinite optimization with elliptopes including the maxcut SDP}},
  author={de Carli Silva, Marcel K and Tuncel, Levent},
  journal={SIAM Journal on Optimization},
  volume={29},
  number={4},
  pages={2650--2676},
  year={2019},
  publisher={SIAM}
}

@techreport{franuke2011via,
  title={Via Minimization in VLSI Chip Design-Application of a Planar Max-Cut Algorithm},
  author={Franuke, L and Tim, N and Gregor, P},
  institution={Department of Computer Science, Faculty of Mathematics and Natural Sciences, Cologne University},
  type = {Technical Report},
  year={2011}
}

@article{wang2013semi,
  title={{Semi-supervised learning using greedy max-cut}},
  author={Wang, Jun and Jebara, Tony and Chang, Shih-Fu},
  journal={The Journal of Machine Learning Research},
  volume={14},
  number={1},
  pages={771--800},
  year={2013},
  publisher={JMLR. org}
}

@article{laurent1995positive,
  title={{On a positive semidefinite relaxation of the cut polytope}},
  author={Laurent, Monique and Poljak, Svatopluk},
  journal={Linear Algebra and its Applications},
  volume={223},
  pages={439--461},
  year={1995},
  publisher={Elsevier}
}

\end{document}